\newtheorem{thm}{Theorem}[section]
\newtheorem*{thm*}{Theorem}
\newtheorem{lem}[thm]{Lemma}
\theoremstyle{definition} \newtheorem{defn}[thm]{Definition}
\newtheorem*{defn*}{Definition}
\theoremstyle{definition} \newtheorem{ex}[thm]{Example}
\newtheorem*{lem*}{Lemma}
\newtheorem{cor}[thm]{Corollary}
\newtheorem*{cor*}{Corollary}
\theoremstyle{definition} \newtheorem{rem}[thm]{Remark}
\newtheorem*{conj*}{Conjecture}
\newtheorem{conj}{Conjecture}
\newtheorem{prop}[thm]{Proposition}
\newtheorem{exlem}[thm]{Example-Lemma}
\newcommand{\CC}{\mathbb{C}}
\newcommand{\NN}{\mathbb{N}}
\newcommand{\ZZ}{\mathbb{Z}}
\newcommand{\RR}{\mathbb{R}}
\DeclareMathOperator{\tr}{tr}
\DeclareMathOperator{\ch}{ch}
\DeclareMathOperator{\cyc}{cyc}
\DeclareMathOperator{\supp}{supp}
\DeclareMathOperator{\rank}{rank}
\DeclareMathOperator{\type}{type}
\DeclareMathOperator{\sgn}{sgn}
\renewcommand{\epsilon}{\varepsilon}
\newcommand{\eqdef}{\overset{\text{def}}{=}}
\begin{document}

\author{Brendan Pawlowski}
\title{Chromatic symmetric functions via the group algebra of $S_n$}

\begin{abstract}
We prove some Schur positivity results for the chromatic symmetric function $X_G$ of a (hyper)graph $G$, using connections to the group algebra of the symmetric group. The first such connection works for (hyper)forests $F$: we describe the Schur coefficients of $X_F$ in terms of eigenvalues of a product of Hermitian idempotents in the group algebra, one factor for each edge (a more general formula of similar shape holds for all chordal graphs). Our main application of this technique is to prove a conjecture of Taylor on the Schur positivity of certain $X_F$, which implies Schur positivity of the \emph{formal group laws} associated to various combinatorial generating functions. We also introduce the \emph{pointed chromatic symmetric function} $X_{G,v}$ associated to a rooted graph $(G,v)$. We prove that if $X_{G,v}$ and $X_{H,w}$ are positive in the \emph{generalized Schur basis} of Strahov, then the chromatic symmetric function of the wedge sum of $(G,v)$ and $(H,w)$ is Schur positive.
\end{abstract}

\maketitle

\section{Introduction}

Let $G$ be a finite simple graph with vertices $V$ and edges $E$. A \emph{coloring} of $G$ is a function $\kappa : V \to \NN$, and $\kappa$ is \emph{proper} if $(v,w) \in E$ implies $\kappa(v) \neq \kappa(w)$.

\begin{defn}[\cite{stanley-chromatic}] The \emph{chromatic symmetric function} of $G$ is the formal power series
\begin{equation*}
X_G = \sum_{\kappa} x_{\kappa},  
\end{equation*}
where $\kappa$ runs over proper colorings of $G$ and $x_{\kappa} = \prod_{v \in V} x_{\kappa(v)}$.
\end{defn} \vspace{-0.3cm}
The more classical chromatic polynomial $\chi_G(k)$ can be recovered as $X_G(\overbrace{1, \ldots, 1}^k, 0, 0, \ldots)$. Being a symmetric function, $X_G$ is a linear combination of Schur functions, and the question we are concerned with here is when these coefficients are nonnegative, i.e. when $X_G$ is \emph{Schur positive}. We will often say ``$G$ is Schur positive'' to mean that $X_G$ is Schur positive.  

The best-known result of this type is due to Gasharov: if $P$ is a $(\mathbf{3}+\mathbf{1})$-free poset (does not contain the disjoint union of a $3$-chain and a $1$-chain as a subposet), and $G$ is its incomparability graph (the graph on $P$ with an edge $(x,y)$ whenever $x$ and $y$ are incomparable in $P$), then $X_G$ is Schur positive \cite{gasharov-chromatic}. Stanley and Stembridge conjectured a stronger claim, that such $X_G$ are positive in the basis of elementary symmetric functions, and this conjecture remains open \cite{stanley-stembridge-immanents}.

Let $\Lambda$ be the ring of symmetric functions over $\CC$, and $\CC[S_n]$ be the complex group algebra of the symmetric group $S_n$. Write $\cyc(\sigma)$ for the cycle type of $\sigma \in S_n$, i.e. the partition of $n$ whose parts are the lengths of the cycles of $\sigma$.
\begin{defn}
    The \emph{Frobenius characteristic map} $\ch : \CC[S_n] \to \Lambda$ is the linear map with $\sigma \mapsto \frac{1}{n!}p_{\cyc(\sigma)}$, where $p_{\lambda}$ is a power sum symmetric function.
\end{defn}
Here is our first tool for proving Schur positivity.

\begin{lem}[cf.  Lemma~\ref{lem:trace}] \label{lem:schur-positive} Let $\alpha \in \CC[S_n]$, viewed as the operator $\CC[S_n] \to \CC[S_n]$, $x \mapsto \alpha x$. If $\alpha$ acts with nonnegative trace on the irreducible submodules of $\CC[S_n]$, then $\ch(\alpha)$ is Schur positive. \end{lem}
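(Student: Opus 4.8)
The plan is to identify the coefficient of $s_\lambda$ in $\ch(\alpha)$ with a positive multiple of the trace by which $\alpha$ acts on an irreducible submodule of type $\lambda$, after which the hypothesis yields the conclusion immediately.

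First I would pin down how $\alpha$, acting by left multiplication, sees each irreducible type. Writing $\alpha = \sum_{\sigma \in S_n} a_\sigma \sigma$, let $\chi^\lambda$ (for $\lambda \vdash n$) be the irreducible characters of $S_n$, extended linearly to $\CC[S_n]$ by $\chi^\lambda(\alpha) = \sum_\sigma a_\sigma \chi^\lambda(\sigma)$. Under the Wedderburn isomorphism $\CC[S_n] \cong \bigoplus_{\lambda \vdash n} \operatorname{End}_{\CC}(S^\lambda)$, where $S^\lambda$ is the irreducible module with character $\chi^\lambda$ and $\rho_\lambda$ the corresponding representation, left multiplication by $\alpha$ becomes $\bigoplus_\lambda \bigl(\rho_\lambda(\alpha)\otimes \id\bigr)$ acting on $\bigoplus_\lambda S^\lambda \otimes (S^\lambda)^*$; so on each irreducible submodule of type $\lambda$ it acts as $\rho_\lambda(\alpha)$, with trace $\chi^\lambda(\alpha)$. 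Hence the hypothesis says precisely that $\chi^\lambda(\alpha) \ge 0$ for every $\lambda \vdash n$.

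Next I would expand $\ch(\alpha)$ using the standard identity $p_\mu = \sum_{\lambda \vdash n} \chi^\lambda_\mu\, s_\lambda$, where $\chi^\lambda_\mu$ is the value of $\chi^\lambda$ on the class of cycle type $\mu$ (the inverse of the character expansion of Schur functions in power sums). Then
\begin{equation*}
\ch(\alpha) \;=\; \frac{1}{n!}\sum_{\sigma \in S_n} a_\sigma\, p_{\cyc(\sigma)} \;=\; \frac{1}{n!}\sum_{\sigma}a_\sigma \sum_{\lambda \vdash n}\chi^\lambda(\sigma)\, s_\lambda \;=\; \sum_{\lambda \vdash n}\frac{\chi^\lambda(\alpha)}{n!}\, s_\lambda,
\end{equation*}
and since each $\chi^\lambda(\alpha)\ge 0$ by the previous paragraph, every Schur coefficient is nonnegative, so $\ch(\alpha)$ is Schur positive.

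I do not anticipate a genuine obstacle here; the whole content is the bookkeeping identity $\chi^\lambda(\alpha) = \tr\bigl(\alpha \text{ acting on an irreducible of type }\lambda\bigr)$. The one point worth stating carefully is the distinction between the trace of left multiplication on a single irreducible constituent — which is $\chi^\lambda(\alpha)$, and is what the hypothesis refers to — and the trace on the whole $\lambda$-isotypic block $\operatorname{End}_{\CC}(S^\lambda)$, which is $(\dim S^\lambda)\,\chi^\lambda(\alpha)$; it is the former that matches the Schur coefficient. Everything else is the usual dictionary between class functions on $S_n$ and symmetric functions under $\ch$.
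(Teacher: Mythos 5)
Your proof is correct and follows essentially the same route as the paper's Lemma~\ref{lem:trace}: identify the trace of left multiplication by $\alpha$ on an irreducible of type $\lambda$ with $\chi^\lambda(\alpha)$, then use $[s_\lambda]p_\mu = \chi^\lambda(\mu)$ to see that this trace is $n!$ times the coefficient of $s_\lambda$ in $\ch(\alpha)$. Your explicit remark distinguishing the trace on a single irreducible constituent from the trace on the full isotypic block (which differs by the factor $\dim S^\lambda$) is a careful touch, but does not change the argument.
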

Lemma~\ref{lem:schur-positive} is most familiar in the case when $\alpha$ is central. The Frobenius characteristic $\ch : Z(\CC[S_n]) \to \Lambda$ is a linear isomorphism sending the characters of $S_n$ to the Schur functions of degree $n$, and the eigenvalues of $\alpha \in Z(\CC[S_n])$ are essentially the same as the Schur coefficients of $\ch(\alpha)$. We will apply Lemma~\ref{lem:schur-positive} to $\alpha$ which are usually not central, but which have nice factorizations that sometimes let us deduce nonnegativity of their eigenvalues using standard linear algebra techniques.

In particular, given a forest $F$ with vertex set $[n] \eqdef \{1, 2, \ldots, n\}$ and edge set $E$, define the operator
\begin{equation*}
\alpha_{F} = n!\prod_{(i,j) \in E} (1 - (i\,j)) \in S_n,
\end{equation*}
where the product is taken in some unspecified linear order.

\begin{thm}[cf. Theorem~\ref{thm:chromatic-operator}] \label{thm:forest-operator} Let $F$ be a forest. Then $\ch(\alpha_{F}) = X_F$, regardless of the choice of edge ordering used to define $\alpha_F$. \end{thm}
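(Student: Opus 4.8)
The plan is to expand both sides in the power sum basis of $\Lambda$ and match term by term. Expanding the product defining $\alpha_F$ gives
\[
\alpha_F = n!\sum_{S \subseteq E}(-1)^{|S|}\pi_S ,
\]
where $\pi_S \in S_n$ denotes the product of the transpositions $(i\,j)$ over $(i,j)\in S$, taken in the linear order inherited from the one used to define $\alpha_F$. Since $\ch(\sigma) = \tfrac{1}{n!}p_{\cyc(\sigma)}$ and $\ch$ is linear, this yields $\ch(\alpha_F) = \sum_{S\subseteq E}(-1)^{|S|}p_{\cyc(\pi_S)}$. On the other side, I would invoke Stanley's power sum expansion of the chromatic symmetric function --- which follows from inclusion--exclusion over the set of monochromatic edges --- namely $X_F = \sum_{S\subseteq E}(-1)^{|S|}p_{\lambda(S)}$, where $\lambda(S)$ is the partition of $n$ whose parts are the sizes of the connected components of the spanning subgraph $([n],S)$. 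Comparing the two expansions, it suffices to establish the combinatorial identity $\cyc(\pi_S) = \lambda(S)$ for every $S\subseteq E$ and every choice of edge order; in fact I would prove the sharper statement that the cycles of $\pi_S$ are precisely the vertex sets of the connected components of $([n],S)$.

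I would prove this by building up $\pi_S$ one transposition at a time. Writing $S = \{e_1,\dots,e_k\}$ in the chosen order, set $\pi_{\le t}$ to be the product of the transpositions corresponding to $e_1,\dots,e_t$ (in that order) and $S_{\le t} = \{e_1,\dots,e_t\}$, and induct on $t$, the claim for $t=0$ being trivial. For the step, the crucial observation is that $S_{\le t+1}$ is a subgraph of the forest $F$, hence itself a forest; therefore adjoining $e_{t+1}$ to $S_{\le t}$ creates no cycle, so the two endpoints of $e_{t+1}$ lie in distinct components of $([n],S_{\le t})$ --- equivalently, by the inductive hypothesis, in distinct cycles of $\pi_{\le t}$. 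Multiplying a permutation by the transposition of two elements lying in distinct cycles merges exactly those two cycles and fixes the rest, which mirrors precisely the effect on components of adding the edge $e_{t+1}$; hence the cycles of $\pi_{\le t+1}$ are the components of $([n],S_{\le t+1})$. Taking $t=k$ gives the claim, and since no hypothesis was placed on the order, $\ch(\alpha_F) = \sum_{S}(-1)^{|S|}p_{\lambda(S)} = X_F$ independently of the edge ordering, as desired.

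The only real content is the combinatorial claim, and the one point that needs care is precisely the order-independence: a priori $\pi_S$ depends on the linear order of the edges, and the incremental argument above is designed to sidestep this by arranging that every newly adjoined transposition always \emph{merges} two cycles rather than \emph{splitting} one --- a dichotomy governed entirely by whether the new edge's endpoints already lie in a common component, which for subsets of a forest they never do. An alternative is to peel off a leaf edge $(v,w)$ of a component: since $v$ meets no other edge of $S$, the corresponding transposition commutes (up to conjugation by the product of the transpositions to its left) past everything to its left, reducing to a smaller forest; but one then still has to verify that the conjugated transposition reinserts $v$ into the correct cycle. I do not anticipate obstacles beyond this bookkeeping.
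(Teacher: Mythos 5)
Your proof is correct and follows essentially the same route as the paper: expand the product defining $\alpha_F$, apply $\ch$ term by term, and match against Stanley's power-sum expansion of $X_F$, the whole content being the D\'enes-type fact that $\cyc(\pi_S)=\type(F_S)$ for every $S\subseteq E(F)$ and every ordering. The only difference is in how that fact is verified---you induct on prefixes of the product, using that adjoining a transposition whose endpoints lie in distinct cycles merges them (which always applies since no subset of forest edges closes a cycle), while the paper cites D\'enes and later proves a hypergraph generalization by peeling off leaves---but this is a minor variation within the same argument.
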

This theorem can be generalized: in Section~\ref{sec:chromatic-operators} we will see that there is a partition $E_1, \ldots, E_q$ of the edges of any chordal graph $G$ with the property that
\begin{equation*}
    \alpha_G = n!\prod_{p=1}^q \left(1 - \sum_{(i,j) \in E_p} (i,j)\right)
\end{equation*}
maps to $X_G$ under $\ch$.

\begin{ex} \label{ex:path}
The operator $1 - (i\,j)$ is positive semidefinite with respect to the Hermitian inner product on $\CC[S_n]$ having $S_n$ as an orthonormal basis. Thus, if $P_3$ is the path with edges $(1,2),(2,3)$, then $\alpha_{P_3} = (1 - (1\,2))(1 - (2\,3))$ is the product of two positive semidefinite operators, and therefore has nonnegative eigenvalues \cite[Corollary 7.6.2]{horn-johnson}. It follows from Lemma~\ref{lem:schur-positive} and  Theorem~\ref{thm:forest-operator} that $P_3$ is Schur positive.
\end{ex}
Our main application of this technique will be to prove a conjecture of Taylor \cite{jair-thesis} on the Schur positivity of certain path-like hypergraphs $G$ (Section~\ref{sec:hyperforests}).  Taylor showed that this conjecture would imply the Schur positivity of the \emph{formal group law} $f(f^{-1}(x_1) + f^{-1}(x_2) + \cdots)$ associated to the generating function $f(x) = \sum_n a_n x^n$ of various interesting families of combinatorial objects.

The \emph{wedge sum} $G \vee H$ of two graphs $G$ and $H$ with distinguished vertices $v$ and $w$ is their disjoint union modulo the identification of the two distinguished vertices. In Section~\ref{sec:pointed-chrom-sym} we define the \emph{pointed chromatic symmetric function} $X_{G,v} \in \Lambda[t]$, and show it has the following properties.
\begin{itemize}
\item The $\Lambda$-linear function defined by $t^i \mapsto p_{i+1}$ sends $X_{G,v}$ to $X_G$.
\item $X_{G,v}$ satisfies a deletion-contraction recurrence.
\item $X_{G,v}X_{H,w} = X_{G \vee H, v}$.
\item If $X_{G,v}$ and $X_{H,w}$ expand nonnegatively in the \emph{pointed Schur basis} of $\Lambda[t]$ (called the \emph{generalized Schur basis} in \cite{strahov}), then $G \vee H$ is Schur positive.
\end{itemize}

\begin{ex}
We will show that $X_{P_n, 1}$ is pointed Schur positive, as is $X_{C, v}$ where $C$ is a cycle graph. Thus any graph
\begin{center}
    \begin{tikzpicture}[scale=0.75, every node/.style = {circle, fill, inner sep=0, minimum size=3pt}]
        \draw (-1,0) node {} to[bend left] (-0.309, 0.951) node {} to[bend left] (0.809, 0.588) node {} to[bend left] (0.809, -0.588) node {};
        \draw[loosely dotted]  (0.809, -0.588) to[bend left] (-0.309, -0.951) node {};
        \draw (-0.309, -0.951) to[bend left] (-1,0);
        \draw (-1,0) to (-2,0) node {};
        \draw[loosely dotted] (-2,0) to (-3,0);
        \draw (-3,0) node {} to (-4,0) node {};
    \end{tikzpicture}
\end{center}
is Schur positive.
\end{ex}
In Section~\ref{sec:pointed-chrom-sym-path} we investigate the expansion of $X_{G,v}$ in a pointed analogue of the elementary symmetric functions. Pointed $e$-positivity of $X_{G,v}$ implies $e$-positivity of $X_G$, and we describe a version of the Stanley-Stembridge conjecture for pointed chromatic symmetric functions.

Our analysis of pointed chromatic symmetric functions will again rely on representation theory. Up to predictable positive scalars, the eigenvalues of some $\beta$ in the center $Z(\CC[S_n])$ of $\CC[S_n]$ are also the coefficients of $\beta$ in the basis of characters, or the Schur coefficients of $\ch(\beta)$. Now let $Z_{\CC[S_n]}(\CC[S_{n-1}])$ denote the centralizer of $\CC[S_{n-1}]$ in $\CC[S_n]$. The algebra $Z_{\CC[S_n]}(\CC[S_{n-1}])$ turns out to be commutative and semisimple, so by Wedderburn's theorem it again has a distinguished basis of ``generalized characters''. Strahov \cite{strahov} defined an analogue $\ch' : Z_{\CC[S_n]}(\CC[S_{n-1}]) \to \Lambda[t]$ of the Frobenius characteristic, and defined the \emph{pointed Schur functions} mentioned above as the images of the generalized characters under $\ch'$. The pointed chromatic symmetric function $X_{F,v}$ is $\ch'(\beta)$ for a certain $\beta \in Z_{\CC[S_n]}(\CC[S_{n-1}])$, and the eigenvalues of $\beta$ are essentially the pointed Schur coefficients of $\ch'(\beta)$.

\section{Chromatic operators} \label{sec:chromatic-operators}

\subsection{Forests} \label{subsec:chromatic-operators-forests}

The following lemma of D\'enes provides the basic connection between trees and permutation factorization which we will exploit.
\begin{lem}[\cite{denes}] \label{lem:tree-cycle-type} Let $G$ be a graph with vertex set $[n]$. Then
\begin{equation*}
\prod_{(i,j) \in E(G)} (i\,\,j) \in S_n,
\end{equation*}
the product being taken in any order, is an $n$-cycle if and only if $G$ is a tree.
\end{lem}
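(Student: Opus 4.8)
The plan is to prove the two directions of the equivalence separately; essentially all of the content is in showing that a tree produces an $n$-cycle. Fix an ordering $e_1,\dots,e_m$ of $E(G)$, write $t_k$ for the transposition corresponding to $e_k$, and set $\pi_k = t_1\cdots t_k$, so that $\sigma_G := \pi_m$ is the product in the statement. Two elementary facts will be used repeatedly. First, the subgroup of $S_n$ generated by $t_1,\dots,t_k$ has orbits equal to the vertex sets of the connected components of $([n],\{e_1,\dots,e_k\})$; since $\pi_k$ lies in this subgroup, every cycle of $\pi_k$ is contained in a single such component. Second, writing $c(\rho)$ for the number of cycles of $\rho\in S_n$ (fixed points included), right-multiplying a permutation by a transposition $(a\,b)$ splits the cycle through $a$ and $b$ into two cycles if $a$ and $b$ lie in a common cycle, and otherwise merges the cycles of $a$ and $b$ into one; in particular $c(\pi_{k})$ and $c(\pi_{k-1})$ differ by exactly $1$.

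For the implication ``$G$ a tree $\Rightarrow$ $\sigma_G$ is an $n$-cycle'' I would induct on $n$, the cases $n \le 2$ being immediate. Let $G$ be a tree on $[n]$ with $n \ge 3$, and choose a leaf $v$ with unique neighbor $u$. Since $v$ is a leaf, the transposition $(u\,v)$ occurs exactly once among the $t_k$, say as $t_\ell$, while every other $t_k$ fixes $v$. Writing $A = t_1\cdots t_{\ell-1}$ and $B = t_{\ell+1}\cdots t_m$, both of which fix $v$, I would use the identity $(u\,v)B = B\,(B^{-1}(u)\ v)$ (valid because $B$ fixes $v$) to get $\sigma_G = (AB)\,(w\,v)$ with $w = B^{-1}(u) \ne v$, where $AB$ is the product of the transpositions of all edges of $G$ other than $(u,v)$, taken in the inherited order; that is, $AB$ is the operator attached to the tree $G-v$ on the vertex set $[n]\setminus\{v\}$ (with $v$ isolated). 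By the inductive hypothesis $AB$ restricts to an $(n-1)$-cycle on $[n]\setminus\{v\}$ and fixes $v$, and right-multiplying by $(w\,v)$ with $w$ on that cycle merges the fixed point $v$ into it, yielding an $n$-cycle. As this argument never uses the order of the factors, it also establishes the ``in any order'' clause.

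For the converse I would argue the contrapositive: if $G$ is not a tree, some ordering of $E(G)$ makes $c(\sigma_G) \ge 2$. If $G$ is disconnected, then by the first fact $c(\sigma_G)$ is at least the number of components of $G$, hence at least $2$, for every ordering. So suppose $G$ is connected and contains a cycle, and pick an edge $e = (x,y)$ on a cycle of $G$, so that $G' := G - e$ is still connected. It suffices to order $E(G')$ so that $x$ and $y$ lie on a common cycle of $\sigma_{G'}$: appending $e$ last then splits that cycle and gives $c(\sigma_G) = c(\sigma_{G'}) + 1 \ge 2$. When $G$ has cycle rank $1$ this is automatic, since $G'$ is then a tree and $\sigma_{G'}$ is a single $n$-cycle. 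For higher cycle rank one peels off further cycle-edges and proceeds by induction on the cycle rank, the point being to keep $x$ and $y$ on a common cycle at each stage; this is possible, but it requires choosing the deleted edges and the edge ordering together (for instance so that each non-tree edge, when appended, merges two cycles rather than splitting one, an operation that cannot separate $x$ from $y$). I expect this last bookkeeping, in the connected higher-cycle-rank case, to be the only genuinely delicate point; the tree direction is routine once one sees to conjugate the leaf transposition past the tail $B$ of the product.
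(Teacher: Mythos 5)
Your forward direction (tree $\Rightarrow$ the product is an $n$-cycle for every ordering) is correct, and it is essentially the argument the paper itself relies on: the paper's ``proof'' of this lemma is a citation, and the in-house argument it points to is the hypertree generalization (Lemma~\ref{lem:hypertree-cycle-type}), whose proof is exactly your leaf induction --- conjugate the leaf's transposition past the tail of the word, so that the product becomes (operator of the smaller tree)$\cdot(w\,v)$, and splice the fixed point $v$ into the $(n-1)$-cycle. Your identity $(u\,v)B = B\,(B^{-1}(u)\,v)$ and the merge step are both fine, as is the reduction of the ``any order'' clause.

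The converse, however, has a genuine gap, and you flag it yourself: the connected case of cycle rank at least two. Your reduction is right as far as it goes (disconnectedness kills every ordering; for rank one, $G'=G-e$ is a spanning tree, so $x,y$ lie on the single cycle of $\sigma_{G'}$ and appending $e$ splits it), but the auxiliary claim you need for higher rank --- that a connected $G'$ admits an ordering of $E(G')$ with the prescribed vertices $x,y$ in a common cycle of $\sigma_{G'}$ --- is asserted (``this is possible'') and never proved, and the mechanism you hint at does not do the job. Since each factor changes the cycle count by $\pm1$, one has $c(\sigma_{G'}) = n - |E(G')| + 2S$ where $S$ is the number of splitting steps; in the delicate case $|E(G')| \geq n$, so $S \geq 1$ no matter how you order or which spanning tree you designate (in particular, if a spanning tree is ordered first, the very first remaining edge must split the $n$-cycle). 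The splits are precisely the steps that can separate $x$ from $y$, so arranging that ``non-tree edges merge'' does not remove the danger --- it only relocates it --- and you give no argument that the unavoidable splits can be steered to keep $x$ and $y$ together. Note also that the sign argument only rules out orderings when $|E(G)| \not\equiv n-1 \pmod 2$, and that for a connected non-tree with the right parity some orderings genuinely do give an $n$-cycle (e.g.\ for $K_4$ minus an edge, $(1\,2)(1\,3)(1\,4)(2\,3)(2\,4) = (1\,4\,2\,3)$), so the converse really does require exhibiting a bad ordering, i.e.\ proving your auxiliary claim or some strengthened induction in its place. To be fair, the paper proves only the forward direction itself and defers the rest to D\'enes/Moszkowski, but as a self-contained proof of the stated equivalence your write-up is incomplete at exactly this point.
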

\begin{proof} See \cite[\S 2]{moszkowski}, or Lemma~\ref{lem:hyperforest-operator-ch} below. \end{proof}

Write $\type(G)$ for the partition whose parts are the sizes (number of vertices) of the connected components of $G$. Recall that $\cyc(\sigma)$ is the partition consisting of the lengths of the cycles of $\sigma$.
\begin{cor} \label{cor:forest-cycle-type} If $F$ is a forest, then $\cyc\left(\prod_{(i,j) \in E(F)} (i\,\,j)\right) = \type(F)$. \end{cor}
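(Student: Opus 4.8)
The plan is to deduce the corollary from Lemma~\ref{lem:tree-cycle-type} by localizing the product to the connected components of $F$. First I would observe that if $F$ is a forest, its edge set $E(F)$ partitions according to the connected components $F_1, \ldots, F_k$ of $F$, since no edge joins two components. Writing $V_m$ for the vertex set of $F_m$, each $V_m$ is a subset of $[n]$ and these subsets are pairwise disjoint with union $[n]$ (isolated vertices form singleton components contributing no edges).

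Next I would exploit the fact that transpositions $(i\,j)$ and $(k\,\ell)$ commute when $\{i,j\}$ and $\{k,\ell\}$ are disjoint. Hence, regardless of the order chosen for the full product over $E(F)$, we can rearrange it as $\prod_{m=1}^k \sigma_m$, where $\sigma_m = \prod_{(i,j) \in E(F_m)} (i\,j)$ is a product taken in some induced order over the edges of $F_m$. Since the $V_m$ are disjoint, the $\sigma_m$ have pairwise disjoint supports and so commute; in particular each $\sigma_m$ is supported on $V_m$, and $\cyc\bigl(\prod_m \sigma_m\bigr)$ is the multiset union of the $\cyc(\sigma_m)$ together with one part of size $1$ for each fixed point of the total product lying in no $V_m$ with $|V_m| > 1$.

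Then I would apply Lemma~\ref{lem:tree-cycle-type} to each component: $F_m$ is a tree on the vertex set $V_m$, so $\sigma_m$ is a $|V_m|$-cycle, contributing a single part equal to $|V_m|$ to the cycle type. (For a singleton component $V_m = \{v\}$ there are no edges, so $\sigma_m$ is the identity on $\{v\}$, consistent with the part $1 = |V_m|$.) Collecting these, $\cyc\bigl(\prod_{(i,j) \in E(F)} (i\,j)\bigr)$ is the partition whose parts are the numbers $|V_1|, \ldots, |V_k|$, which is exactly $\type(F)$ by definition.

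The only point requiring slight care — and the closest thing to an obstacle — is the bookkeeping of fixed points: one must be sure that every element of $[n]$ is accounted for, i.e. that isolated vertices of $F$ correspond precisely to the fixed points of the product and to parts of size $1$ in $\type(F)$, so that the two partitions agree as multisets and not merely on their parts of size $\geq 2$. This is immediate once the component decomposition is set up correctly, since every vertex lies in exactly one $V_m$. A remark worth making is that this also reproves the ``if'' direction of D\'enes's lemma in the connected case as the $k = 1$ instance, so nothing circular is involved.
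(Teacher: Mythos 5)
Your proof is correct and is exactly the argument the paper leaves implicit: the paper states this as an immediate corollary of Lemma~\ref{lem:tree-cycle-type}, the point being precisely that transpositions from different components commute, so the product splits into one $|V_m|$-cycle per tree component, with isolated vertices giving the parts of size $1$. Your careful bookkeeping of fixed points is fine and nothing further is needed.
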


Stanley showed how to expand $X_G$ in the power sum basis of $\Lambda$. Given $S \subseteq E(G)$, let $G_S$ be the graph with vertex set $V(G)$ and edge set $S$.
\begin{thm}[\cite{stanley-chromatic}, Theorem 2.5] \label{thm:power-sum-expansion}
$\displaystyle X_G = \sum_{S \subseteq E(G)} (-1)^{|S|} p_{\type(G_S)}$.
\end{thm}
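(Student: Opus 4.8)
The plan is to prove the identity by inclusion--exclusion over the set of monochromatic edges. Write $n = |V(G)|$. Starting from the definition $X_G = \sum_\kappa x_\kappa$, where $\kappa$ runs over \emph{proper} colorings $V \to \NN$, I would first rewrite this as a sum over \emph{all} colorings $\kappa : V \to \NN$ weighted by the indicator that $\kappa$ is proper. For a coloring $\kappa$ and an edge $e = (i,j)$, set $b_e(\kappa) = 1$ if $\kappa(i) = \kappa(j)$ and $b_e(\kappa) = 0$ otherwise; then $\kappa$ is proper exactly when $\prod_{e \in E(G)} (1 - b_e(\kappa)) = 1$, and this product is $0$ for non-proper $\kappa$. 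Expanding the product gives
\begin{equation*}
X_G = \sum_{\kappa : V \to \NN} x_\kappa \sum_{S \subseteq E(G)} (-1)^{|S|} \prod_{e \in S} b_e(\kappa) = \sum_{S \subseteq E(G)} (-1)^{|S|} \sum_{\substack{\kappa : V \to \NN \\ b_e(\kappa) = 1 \ \forall e \in S}} x_\kappa,
\end{equation*}
where interchanging the finite sum over $S$ with the sum over colorings is harmless at the level of formal power series, since $X_G$ is homogeneous of degree $n$ and the coefficient of each monomial is a finite sum.

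Next I would evaluate the inner sum. A coloring $\kappa$ satisfies $b_e(\kappa) = 1$ for all $e \in S$ precisely when $\kappa$ is constant on each connected component of the spanning subgraph $G_S = (V(G), S)$: indeed, two vertices lie in the same component of $G_S$ iff they are joined by a path using edges of $S$, and such a path forces all its colors to agree. If the components of $G_S$ are $C_1, \ldots, C_k$ with $|C_m| = \lambda_m$, so that $\type(G_S) = (\lambda_1, \ldots, \lambda_k) \vdash n$, then such a $\kappa$ is specified by an independent choice of color $c_m \in \NN$ on each $C_m$, and $x_\kappa = \prod_{m=1}^k x_{c_m}^{\lambda_m}$. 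Hence
\begin{equation*}
\sum_{\substack{\kappa : V \to \NN \\ b_e(\kappa) = 1 \ \forall e \in S}} x_\kappa = \prod_{m=1}^k \Bigl( \sum_{c \in \NN} x_c^{\lambda_m} \Bigr) = \prod_{m=1}^k p_{\lambda_m} = p_{\type(G_S)},
\end{equation*}
and substituting this back yields $X_G = \sum_{S \subseteq E(G)} (-1)^{|S|} p_{\type(G_S)}$, as claimed.

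There is no serious obstacle here: the argument is entirely elementary, being a single application of inclusion--exclusion followed by a factorization of the constrained color sum over connected components. The only points that merit a sentence of care are the formal-power-series justification for swapping the two summations (immediate, as noted above) and the identification of the constraint set $\{\kappa : b_e(\kappa) = 1 \text{ for all } e \in S\}$ with the colorings constant on components of $G_S$; both are routine. This is exactly Stanley's original argument, so I would simply present it in this streamlined form rather than seeking an alternative.
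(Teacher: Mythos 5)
Your proof is correct and is exactly the standard inclusion--exclusion argument of Stanley, which the paper simply cites (as \cite{stanley-chromatic}, Theorem 2.5) without reproducing a proof. All the steps check out: the indicator $\prod_{e}(1-b_e(\kappa))$, the interchange of the finite sum over $S$ with the sum over colorings, and the factorization of the constrained color sum into $p_{\type(G_S)}$ over the components of $G_S$ are all handled correctly.
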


\begin{defn} \label{defn:chromatic-operator}
    Given a forest $F$ with vertex set $[n]$ and a linear ordering $\pi$ of its edges, define the associated \emph{chromatic operator}
    \begin{equation*}
        \alpha_{F,\pi} = n!\hspace{-7pt}\prod_{(i,j) \in E(F)}\hspace{-7pt} (1-(i\,\,j)) \in \CC[S_n],
    \end{equation*}
    where the product is taken in the order prescribed by $\pi$.
\end{defn}
The dependence on $\pi$ will often be unimportant, in which case we write $\alpha_F$. Corollary~\ref{cor:forest-cycle-type} and Theorem~\ref{thm:power-sum-expansion} immediately give:
\begin{thm} \label{thm:chromatic-operator} $\ch \alpha_{F,\pi} = X_F$ for any edge ordering $\pi$ of a forest $F$.
\end{thm}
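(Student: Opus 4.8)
The plan is to expand the noncommutative product defining $\alpha_{F,\pi}$ termwise, apply $\ch$, and then recognize Stanley's power-sum expansion. First I would observe that although the transpositions $(i\,j)$ need not commute with one another, the distributive law still lets me write
\[
\prod_{(i,j) \in E(F)} (1 - (i\,j)) = \sum_{S \subseteq E(F)} (-1)^{|S|} \prod_{(i,j) \in S} (i\,j),
\]
where on the right the product over $S$ is taken in the linear order that $\pi$ induces on $S$: each term in the expansion arises from choosing, for every edge, either the summand $1$ or the summand $-(i\,j)$, and $S$ records exactly the edges for which the second choice was made. Hence $\alpha_{F,\pi} = n! \sum_{S \subseteq E(F)} (-1)^{|S|} \prod_{(i,j) \in S} (i\,j)$.

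Next I would apply $\ch$, using its definition $\sigma \mapsto \tfrac{1}{n!} p_{\cyc(\sigma)}$ together with linearity, to obtain
\[
\ch(\alpha_{F,\pi}) = \sum_{S \subseteq E(F)} (-1)^{|S|}\, p_{\cyc\left(\prod_{(i,j) \in S} (i\,j)\right)}.
\]
Now each subgraph $F_S$ of $F$ (on vertex set $[n]$ with edge set $S$) is itself a forest, being a subgraph of the forest $F$. Applying Corollary~\ref{cor:forest-cycle-type} to $F_S$ gives $\cyc\left(\prod_{(i,j) \in S} (i\,j)\right) = \type(F_S)$, and crucially this identity holds for the product of the transpositions in \emph{any} order — in particular the order coming from $\pi$ — because Lemma~\ref{lem:tree-cycle-type} makes no reference to the ordering. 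Substituting, $\ch(\alpha_{F,\pi}) = \sum_{S \subseteq E(F)} (-1)^{|S|} p_{\type(F_S)}$, which is precisely $X_F$ by Theorem~\ref{thm:power-sum-expansion}. Since the final expression no longer involves $\pi$, the value is independent of the chosen edge ordering, which is the remaining assertion of the theorem.

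There is essentially no serious obstacle here: the only points requiring a moment of care are the expansion of a product of binomials in a noncommutative ring — legitimate provided one keeps the chosen factors of each term in their original relative order — and the remark that the cycle-type computation in Corollary~\ref{cor:forest-cycle-type} is order-independent, which is exactly what makes $\ch(\alpha_{F,\pi})$ visibly independent of $\pi$.
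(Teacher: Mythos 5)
Your proof is correct and follows exactly the route the paper intends: expand the product termwise, apply $\ch$, use Corollary~\ref{cor:forest-cycle-type} (valid for $F_S$ since any subgraph of a forest is a forest, and order-independent by Lemma~\ref{lem:tree-cycle-type}), and conclude via Theorem~\ref{thm:power-sum-expansion}. The paper simply states that these two results ``immediately give'' the theorem, and your writeup is a faithful spelling-out of that same argument.
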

If $G$ is not a forest, then $\ch \alpha_{G,\pi}$ usually does not equal $X_G$, and may depend on the choice of $\pi$. For instance, if $G$ is a length 4 cycle with the two edge orderings $\pi = ((1,2),(2,3),(3,4),(4,1))$ and $\pi' = ((1,2),(3,4),(2,3),(4,1))$, then $\ch \alpha_{G,\pi} \neq \ch \alpha_{G,\pi'}$.

We now recall a little representation theory of finite groups. Let $\Gamma$ be a finite group. The center $Z(\CC[\Gamma])$ consists of the conjugation-invariant elements of $\CC[\Gamma]$, so has a natural basis $\{\sum_{g \in C} g\}$ where $C$ runs over the conjugacy classes of $\Gamma$. On the other hand, because $\CC[\Gamma]$ is a semisimple ring, Wedderburn's theorem says that $Z(\CC[\Gamma])$ has a unique (up to ordering) basis of idempotents adding to $1$, which are necessarily orthogonal (distinct idempotents multiply to $0$). Explicitly, these idempotents are $\epsilon_{\chi} \eqdef \frac{\chi(1)}{|\Gamma|} \sum_g \overline{\chi(g)}g$ where $\chi$ runs over the irreducible characters of $\Gamma$. The group algebra $\CC[\Gamma]$ decomposes as a direct sum $\bigoplus_\chi (V^{\chi})^{\oplus \chi(1)}$, where $V^{\chi}$ is the irreducible module with character $\chi$, and multiplication by the idempotent $\epsilon_{\chi}$ is the unique $\Gamma$-equivariant projection onto $(V^{\chi})^{\oplus \chi(1)}$.

In the case $\Gamma = S_n$, symmetric functions appear because the transition matrix between the conjugacy class basis and idempotent basis described above is the transition matrix between rescalings of the power sum basis and Schur basis. More specifically, the Frobenius characteristic $\ch : Z(\CC[S_n]) \to \Lambda_n$ is a linear isomorphism sending $\sum_{\sigma \in S_n} \chi^{\lambda}(\sigma)\sigma = \frac{n!}{\chi^{\lambda}(1)}\epsilon_{\lambda}$ to $s_{\lambda}$; here and below we abbreviate $\epsilon_{\chi^\lambda}$ to $\epsilon_\lambda$ and $\cramped{V^{\chi^{\lambda}}}$ to $V^\lambda$.

Write $[s_{\lambda}]f$ for the coefficient of $s_{\lambda}$ in $f \in \Lambda$. One can deduce from basic character identities that $[s_{\lambda}]p_{\mu} = \chi^{\lambda}(\mu)$ \cite[I.7]{macdonald}.
\begin{lem} \label{lem:trace} The trace of $\alpha \in \CC[S_n]$ acting on $V^\lambda$ is $\tr(\alpha \epsilon_\lambda) = n![s_{\lambda}]\ch(\alpha)$. \end{lem}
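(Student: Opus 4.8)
The plan is to compute $\tr(\alpha\epsilon_\lambda)$ in two different ways and match them. First I would recall that multiplication by $\epsilon_\lambda$ is the $S_n$-equivariant projection of $\CC[S_n]=\bigoplus_\mu (V^\mu)^{\oplus\chi^\mu(1)}$ onto the isotypic component $(V^\lambda)^{\oplus\chi^\lambda(1)}$. Hence, as operators on $\CC[S_n]$, $\alpha\epsilon_\lambda$ kills every isotypic component except the $\lambda$ one, and on $(V^\lambda)^{\oplus\chi^\lambda(1)}$ it acts as $\alpha$ followed by this projection (which is the identity there). Therefore the trace of $\alpha\epsilon_\lambda$ acting on all of $\CC[S_n]$ equals $\chi^\lambda(1)$ times the trace of $\alpha$ acting on a single copy of $V^\lambda$, i.e. $\tr_{\CC[S_n]}(\alpha\epsilon_\lambda) = \chi^\lambda(1)\cdot\tr_{V^\lambda}(\alpha)$.

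Next I would compute $\tr_{\CC[S_n]}(\alpha\epsilon_\lambda)$ directly in the group-element basis. Writing $\alpha = \sum_{\sigma} a_\sigma \sigma$ and $\epsilon_\lambda = \frac{\chi^\lambda(1)}{n!}\sum_\tau \overline{\chi^\lambda(\tau)}\,\tau$, left multiplication by a group element $g$ permutes the basis $S_n$ freely, so it has trace $0$ unless $g = 1$, in which case the trace is $n!$. Thus $\tr_{\CC[S_n]}(g) = n!\,[g=1]$, and extending linearly, $\tr_{\CC[S_n]}(\alpha\epsilon_\lambda) = n!$ times the coefficient of the identity in $\alpha\epsilon_\lambda$. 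That coefficient is $\frac{\chi^\lambda(1)}{n!}\sum_\sigma a_\sigma \overline{\chi^\lambda(\sigma^{-1})} = \frac{\chi^\lambda(1)}{n!}\sum_\sigma a_\sigma \chi^\lambda(\sigma)$, using that $\chi^\lambda$ is a character of a real (indeed rational) representation so $\overline{\chi^\lambda(\sigma^{-1})} = \chi^\lambda(\sigma)$. Hence $\tr_{\CC[S_n]}(\alpha\epsilon_\lambda) = \chi^\lambda(1)\sum_\sigma a_\sigma \chi^\lambda(\sigma)$. (Combined with the previous paragraph, this incidentally re-proves that $\tr_{V^\lambda}(\alpha) = \sum_\sigma a_\sigma\chi^\lambda(\sigma)$, the expected formula.)

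Finally I would connect $\sum_\sigma a_\sigma\chi^\lambda(\sigma)$ to $[s_\lambda]\ch(\alpha)$. By definition $\ch(\alpha) = \frac{1}{n!}\sum_\sigma a_\sigma\, p_{\cyc(\sigma)}$, and grouping $\sigma$ by cycle type $\mu$, $\ch(\alpha) = \frac{1}{n!}\sum_\mu \big(\sum_{\cyc(\sigma)=\mu} a_\sigma\big) p_\mu$. Taking the coefficient of $s_\lambda$ and using the identity $[s_\lambda]p_\mu = \chi^\lambda(\mu)$ quoted just before the statement, $[s_\lambda]\ch(\alpha) = \frac{1}{n!}\sum_\mu\big(\sum_{\cyc(\sigma)=\mu} a_\sigma\big)\chi^\lambda(\mu) = \frac{1}{n!}\sum_\sigma a_\sigma\chi^\lambda(\sigma)$. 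Therefore $n!\,[s_\lambda]\ch(\alpha) = \sum_\sigma a_\sigma\chi^\lambda(\sigma) = \tr_{V^\lambda}(\alpha) = \frac{1}{\chi^\lambda(1)}\tr(\alpha\epsilon_\lambda)$, which rearranges to the asserted $\tr(\alpha\epsilon_\lambda) = n!\,[s_\lambda]\ch(\alpha)$ once one notes the statement's ``trace on $V^\lambda$'' is $\tr_{V^\lambda}(\alpha)$ and the displayed formula tracks $\tr(\alpha\epsilon_\lambda)$ on the full regular representation — so really the two displayed quantities in the lemma are being asserted equal after absorbing the $\chi^\lambda(1)$ bookkeeping. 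The only mild subtlety, and the point to state carefully rather than a genuine obstacle, is keeping straight which trace (on $V^\lambda$ versus on the $\lambda$-isotypic block of $\CC[S_n]$) is meant and the reality of $\chi^\lambda$; everything else is bookkeeping with the definition of $\ch$ and the identity $[s_\lambda]p_\mu = \chi^\lambda(\mu)$.
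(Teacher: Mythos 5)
Your proof is correct, and it takes a slightly different mechanical route from the paper's. The paper reduces by linearity to $\alpha = \sigma \in S_n$ and simply chains $\tr(\sigma|_{V^\lambda}) = \chi^\lambda(\sigma) = [s_\lambda]p_{\cyc(\sigma)} = n![s_\lambda]\ch(\sigma)$, taking the identification of the trace on $V^\lambda$ with the character as given; you instead keep a general $\alpha = \sum_\sigma a_\sigma\sigma$, compute the trace of $\alpha\epsilon_\lambda$ on the whole regular representation via $\tr_{\CC[S_n]}(g) = n!\,\delta_{g,1}$ (the coefficient-of-identity trick), and then peel off the isotypic multiplicity. The last step, combining $[s_\lambda]p_\mu = \chi^\lambda(\mu)$ with the definition of $\ch$, is the same in both arguments. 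What your route buys is exactly the bookkeeping you flag at the end: $\epsilon_\lambda$ projects onto the isotypic component $(V^\lambda)^{\oplus\chi^\lambda(1)}$, not onto a single copy of $V^\lambda$, so the regular-representation trace is $\tr_{\CC[S_n]}(\alpha\epsilon_\lambda) = \chi^\lambda(1)\,n!\,[s_\lambda]\ch(\alpha)$, whereas the quantity actually used later (e.g. in Corollary~\ref{cor:eigenvalues}) is $\tr(\alpha|_{V^\lambda}) = n![s_\lambda]\ch(\alpha)$; the paper's statement and proof silently conflate these two traces. Since they differ by the positive factor $\chi^\lambda(1)$, none of the positivity applications are affected (and Lemma~\ref{lem:pointed-trace} restores the analogous normalization via $\rank(\epsilon_{\lambda,i})$), but your version makes the normalization explicit. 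Your reality step $\overline{\chi^\lambda(\sigma^{-1})} = \chi^\lambda(\sigma)$ is also fine, as the characters of $S_n$ are real.
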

    \begin{proof}
        As described above, $\epsilon_{\lambda}$ is a projection onto the irreducible representation $V^{\lambda}$, so $\tr(\alpha|_{V^\lambda}) = \tr(\alpha \epsilon_\lambda)$. To see that this equals $n![s_{\lambda}]\ch(\alpha)$, it suffices by linearity to assume $\alpha \in S_n$. Now
        \begin{equation*}
            \tr(\alpha|_{V^{\lambda}}) =  \chi^\lambda(\alpha) =  [s_{\lambda}]p_{\cyc(\alpha)} = n![s_{\lambda}]\ch(\alpha).
        \end{equation*}
    \end{proof}

\begin{cor} \label{cor:eigenvalues}
If $\alpha \in \RR[S_n]$ is positive semistable (all eigenvalues have nonnegative real part), then $\ch \alpha$ is Schur positive.
\end{cor}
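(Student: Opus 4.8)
The plan is to reduce the statement to Lemma~\ref{lem:schur-positive}, so the real content is showing that a positive semistable $\alpha \in \RR[S_n]$ acts with nonnegative trace on each irreducible $V^\lambda$. By Lemma~\ref{lem:trace}, $n![s_\lambda]\ch(\alpha) = \tr(\alpha\epsilon_\lambda) = \tr(\alpha|_{V^\lambda})$, so it suffices to prove each such trace is a nonnegative real number; equivalently, that the eigenvalues of $\alpha$ acting on $V^\lambda$ have sum with nonnegative real part.

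\begin{proof}
By Lemma~\ref{lem:trace}, $n![s_\lambda]\ch(\alpha) = \tr(\alpha\epsilon_\lambda) = \tr(\alpha|_{V^\lambda})$ for each partition $\lambda$ of $n$, so by Lemma~\ref{lem:schur-positive} it is enough to show each $\tr(\alpha|_{V^\lambda})$ is a nonnegative real number. Fix $\lambda$ and consider the action of $\alpha$ on the complex vector space $V^\lambda$. The trace of this operator is the sum of its eigenvalues (with multiplicity, over $\CC$). The eigenvalues of $\alpha$ acting on $V^\lambda$ are among the eigenvalues of $\alpha$ acting on all of $\CC[S_n]$ by left multiplication: indeed $\CC[S_n] \cong \bigoplus_\mu (V^\mu)^{\oplus \chi^\mu(1)}$ as a left module, so the spectrum of $\alpha$ on $\CC[S_n]$ is the union of its spectra on the various $V^\mu$. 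By hypothesis $\alpha$ is positive semistable, meaning every eigenvalue of $\alpha$ on $\CC[S_n]$ has nonnegative real part; hence every eigenvalue of $\alpha$ on $V^\lambda$ has nonnegative real part, and so $\mathrm{Re}\,\tr(\alpha|_{V^\lambda}) \geq 0$.

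It remains to see that $\tr(\alpha|_{V^\lambda})$ is in fact real, so that ``nonnegative real part'' upgrades to ``nonnegative''. Since $\alpha \in \RR[S_n]$, the operator of left multiplication by $\alpha$ on $\CC[S_n]$ is the complexification of a real operator on $\RR[S_n]$, so its characteristic polynomial has real coefficients; thus its nonreal eigenvalues come in complex conjugate pairs, and the same is true of the list of eigenvalues on each individual isotypic component, because the conjugation symmetry of $\CC[S_n]$ permutes the isotypic components according to the action of complex conjugation on the associated characters. When $\chi^\lambda$ is a real character (which is always the case for $S_n$), complex conjugation preserves the $V^\lambda$-isotypic component, so the nonreal eigenvalues of $\alpha$ on that component pair up and $\tr(\alpha|_{V^\lambda}) = \chi^\lambda(\alpha) \in \RR$. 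Combining with the previous paragraph, $[s_\lambda]\ch(\alpha) = \tfrac{1}{n!}\tr(\alpha|_{V^\lambda}) \geq 0$, so $\ch(\alpha)$ is Schur positive.
\end{proof}

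I expect the main subtlety to be the reality argument: one must be careful that ``positive semistable over $\RR$'' gives nonnegativity of a trace rather than merely nonnegativity of a real part, and the cleanest route is to invoke that every irreducible character of $S_n$ is real-valued, so that $\tr(\alpha|_{V^\lambda}) = \chi^\lambda(\alpha)$ is automatically real whenever $\alpha \in \RR[S_n]$ (in fact this is immediate from Lemma~\ref{lem:trace} once one knows $\ch(\alpha) \in \Lambda_{\RR}$, since $p_\mu$ and $s_\lambda$ are defined over $\RR$). One could alternatively phrase the whole argument over $\RR$ from the start by decomposing $\RR[S_n]$ into real irreducibles, but since all irreducibles of $S_n$ are of real type this adds nothing, and the conjugate-pairing observation is the quickest justification.
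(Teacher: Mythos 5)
Your proof is correct and follows essentially the same route as the paper: reduce via Lemma~\ref{lem:trace} to showing $\tr(\alpha|_{V^\lambda}) \geq 0$, observe the eigenvalues there have nonnegative real part by semistability, and use the reality of the irreducible representations (equivalently the real-valuedness of the characters $\chi^\lambda$) to conclude the trace is real, hence nonnegative. The only cosmetic difference is that the paper states the reality step directly as ``the irreducibles of $S_n$ are defined over $\RR$, so the eigenvalues of $\alpha|_{V^\lambda}$ come in conjugate pairs,'' whereas you route it through the conjugation action on isotypic components / $\chi^\lambda(\alpha)\in\RR$ — the same content.
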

\begin{proof}
 The irreducible representations of $S_n$ are defined over $\RR$ and $\alpha \in \RR[S_n]$, so  the eigenvalues of $\alpha|_{V^{\lambda}}$ come in complex conjugate pairs. By the positive semistability assumption, their sum is nonnegative, and this sum is $\tr(\alpha|_{V^\lambda}) = n![s_{\lambda}]\ch(\alpha)$.
\end{proof}

For our purposes, Theorem~\ref{thm:chromatic-operator} allows us to work with any convenient edge ordering. We conjecture that the spectrum of $\alpha_{F,\pi}$ does not depend on $\pi$, and that there is a nice family of forests with positive semistable chromatic operators.
\begin{conj} \label{conj:forest-conjecture} Let $F$ be a forest.
\begin{enumerate}[(a)]
\item The characteristic polynomial of $\alpha_{F,\pi}$ does not depend on the edge ordering $\pi$.
\item The number of forests on $n$ vertices whose chromatic operator is positive semistable is $f_{n-1}$, where $f_n$ is the $n$th Fibonacci number (using the convention that $f_1 = f_2 = 1$).
\end{enumerate}
\end{conj}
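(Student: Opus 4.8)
The plan is to handle the two parts separately: (a) by a trace-of-powers argument extending Theorem~\ref{thm:chromatic-operator}, and (b) by first characterizing the forests whose chromatic operator is positive semistable.

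For (a), the natural target is the stronger statement that for each irreducible $V^\lambda$ the multiset of eigenvalues of $\alpha_{F,\pi}$ acting on $V^\lambda$ does not depend on $\pi$. By Lemma~\ref{lem:trace}, $\tr(\alpha_{F,\pi}^k|_{V^\lambda}) = n![s_\lambda]\ch(\alpha_{F,\pi}^k)$ for every $k\ge 1$, and a finite multiset of complex numbers is determined by its power sums, so it suffices to prove that $\ch(\alpha_{F,\pi}^k)$ is independent of $\pi$. The case $k=1$ is Theorem~\ref{thm:chromatic-operator}. In general, concatenating $k$ copies of the edge sequence prescribed by $\pi$ into a single sequence of transpositions $f_1,f_2,\dots$, we get $\alpha_{F,\pi}^k=(n!)^k\prod_j(1-f_j)$, and hence $\ch(\alpha_{F,\pi}^k)=(n!)^{k-1}\sum_S(-1)^{|S|}p_{\cyc(\prod_{j\in S}f_j)}$, the sum over sub-sequences $S$ with each product formed in the induced order. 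For $k=1$ the $S$ range over subforests of $F$ and the cycle types are order-free by Corollary~\ref{cor:forest-cycle-type}; for $k\ge 2$ repeated transpositions appear and the individual cycle types genuinely depend on the order, so the point is to produce a sign-reversing involution on the order-sensitive terms. An alternative, more hands-on route is to observe that any two edge orderings are joined by transpositions of adjacent edges, that vertex-disjoint edges contribute commuting factors, and that for edges $(a,b)$ and $(b,c)$ sharing a vertex $b$ conjugation by $(a\,c)$ interchanges $1-(a\,b)$ and $1-(b\,c)$; the obstruction is that $(a\,c)$ need not commute with the remaining factors, so one must upgrade it to a genuine conjugating unit of $\CC[S_n]$. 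I expect this correction -- equivalently, the involution above -- to be the main difficulty in (a).

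For (b), the first step is to pin down exactly which forests have positive semistable chromatic operator; the appearance of $f_{n-1}$ strongly suggests they form a family with a one-step recursive description, so guessing this family correctly from small cases is itself part of the work. The positive direction has good tools: $1-(i\,j)$ is positive semidefinite, a product of two positive semidefinite operators has nonnegative real eigenvalues \cite[Corollary 7.6.2]{horn-johnson}, and adjoining to $F$ an isolated vertex or a disjoint edge cannot create an eigenvalue with negative real part (for a disjoint edge one multiplies $\alpha_F$ by a commuting factor $1-(i\,j)$, whose eigenvalues are $0$ and $2$). These remarks already place many forests on the positive-semistable side, hence by Corollary~\ref{cor:eigenvalues} on the Schur-positive side. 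The hard direction is to show the forests outside the conjectured family are \emph{not} positive semistable, i.e.\ to exhibit in some explicit $V^\lambda$ an eigenvalue with negative real part. The sign twist is the natural place to look: on the standard module $V^{(n-1,1)}=\{x\in\CC^n:\sum_i x_i=0\}$ one has $(i\,j)=I-2\Pi_{ij}$ with $\Pi_{ij}$ the orthogonal projection onto $e_i-e_j$, so on $V^{(2,1^{n-2})}$ the factor $1-(i\,j)$ acts as $2(I-\Pi_{ij})$ and $\alpha_{F,\pi}$ restricts to a positive scalar times a product of corank-one projections -- an operator which, for a sufficiently complicated forest (long paths or stars being the obvious test cases), is expected to leave the closed right half-plane.

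Once the family is identified and both directions are established, part (a) -- or really just the $\pi$-independence of positive semistability, which is all that gets used -- makes ``$\alpha_F$ is positive semistable'' an honest property of the graph $F$, and one finishes by a transfer-matrix or direct recursion argument matching the family to the Fibonacci numbers. The dominant obstacle throughout (b) is the negative direction: exhibiting an eigenvalue with negative real part is not a soft argument, and the relevant irreducibles grow with $n$. A secondary obstacle is establishing positive semistability for the larger forests of the family, because once three or more factors $1-(i\,j)$ are multiplied the ``product of two positive semidefinites'' shortcut is unavailable and one must exploit structural features of $F$ -- a leaf ordering of its edges, say -- to keep the whole spectrum in the closed right half-plane.
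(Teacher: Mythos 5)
This statement is a \emph{conjecture} in the paper: the author gives no proof of either part, only supporting evidence --- a computer check of (b) through $n=10$, and, for (a), exactly the reduction you propose. Indeed, the paper remarks that (a) would follow if $[1]\alpha_{F,\pi}^k$ were independent of $\pi$ for all $k$, which (since $\tr(\alpha)=n!\,[1]\alpha$) is the same as your observation that the power traces, equivalently $\ch(\alpha_{F,\pi}^k)$, determine the characteristic polynomial; the paper verifies this only for $k=1,2$ (the $k=2$ case counting matchings of $F$). So your framing of (a) matches the paper's, but neither you nor the paper supplies the missing combinatorial step: you explicitly defer the sign-reversing involution on the order-sensitive terms, and that is precisely where the open problem lives. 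Your alternative ``hands-on'' route cannot be repaired in the way you hope: the paper notes that $\alpha_{F,\pi}$ and $\alpha_{F,\pi'}$ are in general \emph{not} similar and can have different minimal polynomials (already for the $4$-vertex path), so no conjugating unit of $\CC[S_n]$ interchanging adjacent non-commuting factors can exist; any successful argument must prove equality of characteristic polynomials without similarity.

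For (b) the situation is the same but starker: you have not identified the Fibonacci-counted family, and both directions you flag as hard --- proving positive semistability for the forests in the family once more than two non-commuting factors appear, and exhibiting eigenvalues with negative real part outside it --- are open; the paper offers only the empirical list of trees through $10$ vertices (Figure 1) and the easy observations you also make (disjoint unions, paths via the two-factor positive semidefinite argument, Corollary~\ref{cor:eigenvalues}). Your heuristic for finding bad eigenvalues in $V^{(2,1^{n-2})}$, where $1-(i\,j)$ acts as twice a corank-one projection, is a reasonable place to look but is not an argument. In short, the proposal is a sensible research plan consistent with everything the paper says, but it is not a proof, and the paper contains none to compare it against.
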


The eigenvalues of $\alpha_{F \sqcup G}$ are the disjoint union of the eigenvalues of $\alpha_F$ and of $\alpha_G$, so $\alpha_{F}$ is positive semistable if and only if $\alpha_C$ is for all connected components $C$ of $F$. Figure~\ref{fig:conjecture-trees} shows the other trees $T$ on at most $10$ vertices for which $\alpha_T$ is positive semistable, omitting the paths for brevity ($\alpha_P$ is positive semistable for any path $P$, as per the proof of Theorem~\ref{thm:edge-bipartite-hypergraphs}).

\begin{figure} \label{fig:conjecture-trees} \caption{The trees with at most 10 vertices whose chromatic operator is positive semistable, not including the paths.}
    \vspace{5mm}
    \begin{tikzpicture}[scale=0.25, every node/.style={circle, black, fill, inner sep=0pt, minimum size=3pt}]
        \node at (0,0) {}
            child { node {}
                child { node {}
                    child { node {} }
                    child { node {} }
                }
            };

        \node at (8,0) {}
            child { node {}
                child { node {}
                    child { node {} }
                    child { node {}
                        child { node {} } }
                }
            };   

        \node at (16,0) {}
            child { node {}
                child { node {}
                    child { node {} }
                    child { node {}
                        child { node {}
                            child { node {} } } }
                }
            };

        \node at (20,0) {}
            child { node {}
                child { node {}
                    child { node {}
                        child { node {} } }
                    child { node {}
                        child { node {} } }
                }
            };

        \node at (24,0) {}
            child { node {}
                child { node {}
                    child { node {} }
                    child { node {} }
                    child { node {} 
                        child { node {} }
                    }
                }
            };

        \node at (32,0) {}
            child { node {}
                child { node {}
                    child { node {} }
                    child { node {} 
                        child { node {}
                            child { node {}
                                child { node {} }
                            }
                        }
                    }
                }
            };

        \node at (36,0) {}
            child { node {}
                child { node {}
                    child { node {} }
                    child { node {} 
                        child { node {} }
                        child { node {}
                            child { node {} }
                        }
                    }
                }
            };

        \node at (40,0) {}
            child { node {}
                child { node {} 
                    child { node {}
                        child { node {} }
                    }
                    child { node {}
                        child { node {}
                            child { node {} }
                        }
                    }
                }
            };

        \node at (44,0) {}
            child { node {}
                child { node {}
                    child { node {} }
                    child { node {}
                        child { node {} }
                    }
                    child { node {}
                        child { node {} }
                    }
                }
            };

        \node at (7,-11) {}
            child { node {}
                child { node {}
                    child { node {} }
                    child { node {}
                        child { node {}
                            child { node {}
                                child { node {}
                                    child { node {} }
                                }
                            }
                        }
                    }
                }
            };

        \node at (12,-11) {}
            child { node {}
                child { node {}
                    child { node {} 
                        child { node {} }
                    }
                    child { node {}
                        child { node {}
                            child { node {}
                                child { node {} }
                            }
                        }
                    }
                }
            };  
        
        \node at (17,-11) {}
            child { node {} 
                child { node {}
                    child { node {} }
                    child { node {}
                        child { node {}
                            child { node {} }
                            child { node {}
                                child { node {} }
                            }
                        }
                    }
                }
            };

        \node at (22,-11) {}
            child { node {}
                child { node {}
                    child { node {} }
                    child { node {}
                        child { node {}
                            child { node {} }
                        }
                        child { node {}
                            child { node {} }
                        }
                    }
                }
            };

        \node at (27,-11) {}
            child { node {}
                child { node {}
                    child { node {}
                        child { node {}
                            child { node {} }
                        }
                    }
                    child { node {}
                        child { node {}
                            child { node {} }
                        }
                    }
                }
            };

        \node at (32,-11) {}
            child { node {}
                child { node {}
                    child { node {} }
                    child { node {}
                        child { node {} }
                    }
                    child { node {}
                        child { node {}
                            child { node {} }
                        }
                    }
                }
            };

        \node at (37,-11) {}
            child { node {}
                child { node {}
                    child { node {}
                        child { node {} }
                    }
                    child { node {}
                        child { node {} }
                    }
                    child { node {}
                        child { node {} }
                    }
                }
            };

        \node at (0,-25) {}
            child { node {}
                child { node {}
                    child { node {} }
                    child { node {}
                        child { node {} 
                            child { node {}
                                child { node {}
                                    child { node {}
                                        child { node {} }
                                    }
                                }
                            }
                        }
                    }
                }
            };

        \node at (5,-25) {}
            child { node {}
                child { node {}
                    child { node {} }
                    child { node {}
                        child { node {}
                            child { node {}
                                child { node {} }
                                child { node {}
                                    child { node {} }
                                }
                            }
                        }
                    }
                }
            };

        \node at (10,-25) {}
            child { node {}
                child { node {}
                    child { node {}
                        child { node {} }
                    }
                    child { node {}
                        child { node {}
                            child { node {}
                                child { node {}
                                    child { node {} }
                                }
                            }
                        }
                    }
                }
            };

        \node at (15,-25) {}
            child { node {}
                child { node {}
                    child { node {}
                        child { node {}
                            child { node {} }
                        }
                    }
                    child { node {}
                        child { node {}
                            child { node {}
                                child { node {} }
                            }
                        }
                    }
                }
            };

        \node at (20,-25) {}
            child { node {}
                child { node {}
                    child { node {} }
                    child { node {}
                        child { node {} }
                    }
                    child { node {}
                        child { node {}
                            child { node {}
                                child { node {} }
                            }
                        }
                    }
                }
            };

        \node at (25,-25) {}
            child { node {}
                child { node {}
                    child { node {}
                        child { node {} }
                    }
                    child { node {}
                        child { node {}
                            child { node {} }
                            child { node {}
                                child { node {} }
                            }
                        }
                    }
                }
            };

        \node at (30,-25) {}
            child { node {}
                child { node {}
                    child { node {}
                        child { node {} }
                    }
                    child { node {}
                        child { node {}
                            child { node {} }
                        }
                        child { node {}
                            child { node {} }
                        }
                    }
                }
            };

        \node at (35,-25) {}
            child { node {}
                child { node {}
                    child { node {} }
                    child { node {}
                        child { node {}
                            child { node {} }
                        }
                        child { node {}
                            child { node {}
                                child { node {} }
                            }
                        }
                    }
                }
            };

        \node at (40,-25) {}
            child { node {}
                child { node {}
                    child { node {} }
                    child { node {}
                        child { node {} }
                    }
                    child { node {}
                        child { node {}
                            child { node {} }
                        }
                        child { node {} }
                    }
                }
            };

        \node at (46,-25) {}
            child { node {}
                child { node {}
                    child { node {}
                        child { node {} }
                    }
                    child { node {}
                        child { node {} }
                    }
                    child { node {}
                        child { node {}
                            child { node {} }
                        }
                    }
                }
            };
    \end{tikzpicture}
\end{figure}
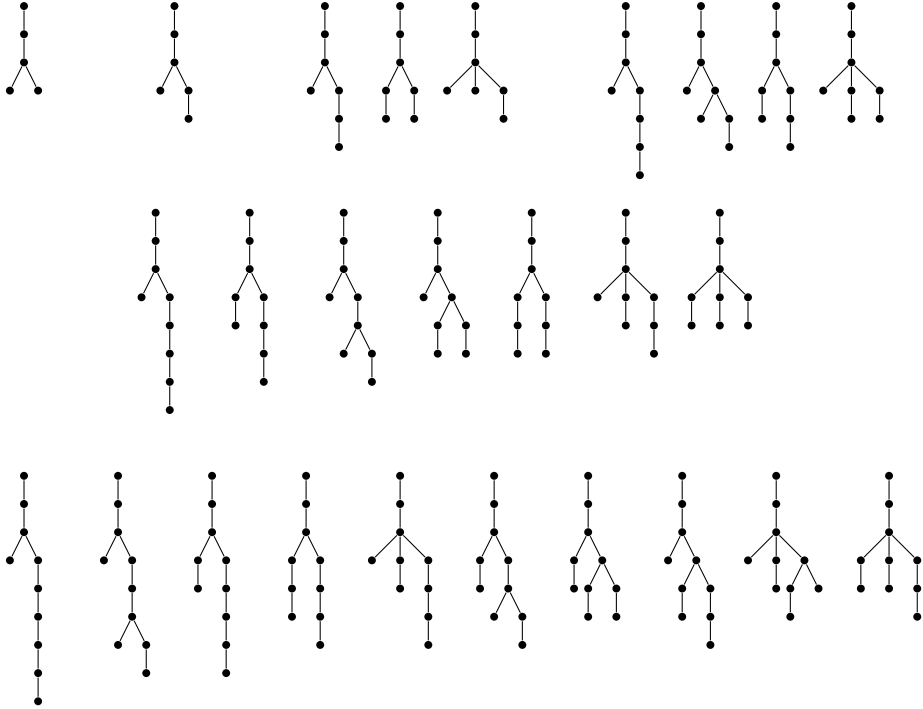

We have checked part (b) of Conjecture~\ref{conj:forest-conjecture} up to $10$ vertices. In general the operators $\alpha_{F,\pi}$ for different $\pi$ are \emph{not} similar to each other, and need not have the same minimal polynomial. For instance, if $F = P_4$ is the length 4 path with two edge orderings $\pi = ((1,2),(2,3),(3,4))$ and $\pi' = ((1,2),(3,4),(2,3))$, then $\alpha_{P_3, \pi'}$ is diagonalizable while $\alpha_{P_3, \pi}$ is not.

For $\alpha \in \CC[S_n]$, let $[1]\alpha$ denote the coefficient of the identity permutation in $\alpha$. Conjecture~\ref{conj:forest-conjecture}(a) would follow from the more combinatorial conjecture that $[1]\alpha_{F,\pi}^k$ is independent of $\pi$ for every positive integer $k$ (or just $k \leq n!$). Indeed, for $\alpha \in \CC[S_n]$ one has $\tr(\alpha) = n! [1]\alpha$, so the numbers $n! [1]\alpha_{F,\pi}^k$ are the power sum symmetric functions $p_k$ evaluated on the eigenvalues of $\alpha_{F,\pi}$. These numbers determine the elementary symmetric functions $e_k$ evaluated on the eigenvalues of $\alpha_{F,\pi}$, which are the coefficients of the characteristic polynomial of $\alpha_{F,\pi}$ (up to a predictable sign). This combinatorial conjecture holds when $k = 1, 2$: Lemma~\ref{lem:tree-cycle-type} implies that $\frac{1}{n!}[1]\alpha_{F,\pi} = 1$ and that $\frac{1}{n!}[1]\alpha_{F,\pi}^2$ is the number of matchings of the forest $F$. It appears that more generally, the symmetric function $\ch(\alpha_{F,\pi}^k)$ is independent of $\pi$, although it need not be monomial-positive for $k > 1$.

\subsection{Chordal graphs} \label{sec:chordal}
In this section we note that the product definition of $\alpha_F$ (Definition~\ref{defn:chromatic-operator}) can be generalized to a broader class of graphs. Put a total ordering on the edges of $G$. A \emph{circuit} of $G$ is the set of edges in a cycle, and a \emph{broken circuit} is a circuit minus the edge with the largest label. The \emph{broken circuit complex} $B_G$ of $G$ is the collection of all sets of edges which contain no  broken circuit of $G$.

\begin{defn} The \emph{join} of two simplicial complexes $\Delta_1$ and $\Delta_2$ is the complex $\Delta_1 \ast \Delta_2 = \{F_1 \cup F_2 : F_1 \in \Delta_1, F_2 \in \Delta_2\}$. A complex $\Delta$ \emph{factors completely} if it is the join of zero-dimensional complexes. \end{defn}

    \begin{ex} Take $G$ to be the complete graph $K_3$, with edges $a < b < c$. There is one broken circuit, namely $\{a,b\}$, and the broken circuit complex $B_{K_3}$ is the simplicial complex with groundset $\{a,b,c\}$ and facets $\{a,c\}, \{b,c\}$. This complex factors completely: it is the join $\{\{c\}\} \ast \{\{a\},\{b\}\}$.
    \end{ex}

Stanley gave the following alternative formula for $X_G$ in terms of power sums.
\begin{thm}[\cite{stanley-chromatic}, Theorem 2.9]
    $X_G = \sum_{S \in B_G} (-1)^{|S|} p_{\type(G|_S)}$.
\end{thm}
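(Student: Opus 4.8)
The plan is to deduce this from the coarser expansion $X_G = \sum_{S \subseteq E(G)} (-1)^{|S|} p_{\type(G_S)}$ of Theorem~\ref{thm:power-sum-expansion} by a sign-reversing involution that cancels, in pairs, all terms indexed by edge sets that contain a broken circuit. This is the chromatic-symmetric-function version of Whitney's broken circuit theorem, and one could alternatively just invoke Stanley's original argument; I will sketch the self-contained version. Here $G|_S$ denotes the spanning subgraph $(V(G),S)$, so $\type(G|_S)$ is consistent with $\type(G_S)$ in Theorem~\ref{thm:power-sum-expansion}.

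Fix the given total order on $E(G)$. Let $\mathcal{N}$ be the collection of $S \subseteq E(G)$ containing at least one broken circuit, so that $2^{E(G)} = B_G \sqcup \mathcal{N}$. By Theorem~\ref{thm:power-sum-expansion} it suffices to prove
\begin{equation*}
\sum_{S \in \mathcal{N}} (-1)^{|S|} p_{\type(G|_S)} = 0.
\end{equation*}
For $S \in \mathcal{N}$, define $e(S)$ to be the smallest edge $e$ for which there is a circuit $C$ with $\max C = e$ and $C \setminus \{e\} \subseteq S$; this exists exactly because $S$ contains a broken circuit (namely $C \setminus \{\max C\}$), and the defining condition constrains $S$ only on edges strictly below $e$. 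Set $\phi(S) = S \mathbin{\triangle} \{e(S)\}$.

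There are three things to verify, and I expect the second to be the only subtle one. First, $\phi$ preserves $\type(G|_S)$: fixing a witnessing circuit $C_0$ with $\max C_0 = e(S) =: e_0$ and $C_0 \setminus \{e_0\} \subseteq S$, the path $C_0 \setminus \{e_0\}$ joins the endpoints of $e_0$ and lies in both $S \setminus \{e_0\}$ and $S \cup \{e_0\}$, so toggling $e_0$ changes neither the vertex set spanned nor the connected components. Second, $e(\phi(S)) = e(S)$: for any edge $e' \le e_0$ and circuit $C'$ with $\max C' = e'$, the set $C' \setminus \{e'\}$ consists of edges $< e' \le e_0$, so its membership in $S$ agrees with its membership in $S \mathbin{\triangle} \{e_0\}$; hence no edge $< e_0$ becomes eligible and $C_0$ still witnesses $e_0$, giving $e(\phi(S)) = e_0$. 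In particular $\phi$ maps $\mathcal{N}$ into $\mathcal{N}$ and $\phi \circ \phi = \mathrm{id}$. Third, $\phi$ is fixed-point-free and flips $|S|$ by one, hence is sign-reversing. Grouping the sum over the $\phi$-orbits, all of size two and contributing cancelling pairs with equal $p_{\type(G|_S)}$, yields $0$, which completes the proof. The design of $e(S)$ around the condition $C \setminus \{\max C\} \subseteq S$ — which never refers to whether the maximal edge of a circuit lies in $S$ — is precisely what makes the toggled edge an invariant of the orbit; the rest is bookkeeping.
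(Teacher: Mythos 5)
Your argument is correct, and the key step — that toggling $e(S)$ does not disturb which edges $e' \le e(S)$ are eligible, because eligibility of $e'$ only constrains $S$ on edges strictly below $e'$ — is exactly the point that makes the involution well defined; the type-preservation and sign-reversal checks are also right. Note, though, that the paper offers no proof of this statement: it is quoted from Stanley's paper (Theorem 2.9 of \cite{stanley-chromatic}), and your sign-reversing involution is essentially the standard Whitney broken-circuit argument used there, so you have supplied the omitted proof rather than found a genuinely different route.
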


\begin{cor} If $B_G$ factors completely as $\Delta_1 \ast \cdots \ast \Delta_q$, define
    \begin{equation*}
        \alpha_G = n! \prod_{p=1}^q \left(1 - \sum_{(i,j) \in \Delta_p} (i\,j)\right).
    \end{equation*}
    Then $\ch(\alpha_G) = X_G$.
\end{cor}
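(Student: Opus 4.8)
The plan is to expand the product defining $\alpha_G$, apply $\ch$ term by term, and recognize the answer as Stanley's broken-circuit expansion $X_G = \sum_{S \in B_G} (-1)^{|S|} p_{\type(G|_S)}$ from \cite{stanley-chromatic}. First I would unpack the hypothesis: a complete factorization $B_G = \Delta_1 \ast \cdots \ast \Delta_q$ into zero-dimensional complexes comes with a partition $E(G) = V_1 \sqcup \cdots \sqcup V_q$ of the edge set, where $V_p$ is the ground set of $\Delta_p$ (this disjointness is what ``factors completely'' means, and it is needed: without it the statement fails, as $B_{K_2} = \{\emptyset,\{a\}\} \ast \{\emptyset,\{a\}\}$ illustrates). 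Since each $\Delta_p$ is zero-dimensional on $V_p$, a set $S \subseteq E(G)$ is a face of $B_G$ if and only if $|S \cap V_p| \le 1$ for every $p$.

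Now expand: choosing from each factor of $\prod_{p=1}^q \bigl(1 - \sum_{(i,j) \in \Delta_p}(i\,j)\bigr)$ either the summand $1$ or a single transposition $(i\,j)$ with $(i,j) \in \Delta_p$ amounts to selecting at most one edge from each $V_p$, i.e.\ a face $S$ of $B_G$, and every face arises exactly once. Hence
\begin{equation*}
\alpha_G = n! \sum_{S \in B_G} (-1)^{|S|} \sigma_S, \qquad \sigma_S \eqdef \prod_{(i,j) \in S}(i\,j),
\end{equation*}
where the latter product is taken over the edges of $S$ in increasing order of the index of the factor $\Delta_p$ containing them.

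Next I would pin down the cycle type of $\sigma_S$. A face $S$ of $B_G$ contains no broken circuit, hence no circuit at all (a circuit contains the broken circuit obtained by deleting its largest edge), so $G|_S$ is a forest. Corollary~\ref{cor:forest-cycle-type}, applied to $G|_S$ with the particular edge ordering used to build $\sigma_S$, gives $\cyc(\sigma_S) = \type(G|_S)$; in particular this is independent of how the factors were ordered. Applying $\ch$ to the display above and using $\ch(n!\,\sigma) = p_{\cyc(\sigma)}$,
\begin{equation*}
\ch(\alpha_G) = \sum_{S \in B_G} (-1)^{|S|} p_{\cyc(\sigma_S)} = \sum_{S \in B_G} (-1)^{|S|} p_{\type(G|_S)} = X_G
\end{equation*}
by Stanley's theorem; the degenerate case $E(G) = \emptyset$ (so $q = 0$, $\alpha_G = n!$, $\ch(\alpha_G) = p_1^n = X_G$) is consistent as well.

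The one point worth emphasizing --- and the only place any care is needed --- is the tension between non-commutativity and order-independence: transpositions drawn from different factors $\Delta_p$ need not commute, so $\sigma_S$ genuinely depends on the chosen order, yet only its cycle type matters after applying $\ch$, and that cycle type is order-independent precisely because $G|_S$ is a forest. Everything else is bookkeeping about joins of zero-dimensional complexes, so I do not anticipate a serious obstacle.
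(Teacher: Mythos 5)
Your proof is correct and follows essentially the same route as the paper's: expand the product into a signed sum over faces $S$ of $B_G$, note that such $S$ contain no circuit so $G|_S$ is a forest, invoke Corollary~\ref{cor:forest-cycle-type} to get $\cyc(\sigma_S)=\type(G|_S)$ independently of the factor ordering, and conclude via Stanley's broken-circuit expansion of $X_G$. Your added remarks on the disjointness of the ground sets in a complete factorization and on the $E(G)=\emptyset$ case are careful elaborations of points the paper leaves implicit.
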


\begin{proof}
    If $S \in B_G$, then $G|_S$ is a forest, given that if $S$ contained a circuit it would contain a broken circuit. Corollary~\ref{cor:forest-cycle-type} then implies that
    \begin{equation*}
        \type(G|_S) = \cyc\left(\prod_{(i,j) \in S} (i\,j)\right).
    \end{equation*}
    Thus $\ch$ maps $n!\sum_{S \in B_G} (-1)^{|S|} \prod_{(i,j) \in S} (i\,\,j)$ to $X_G$. But this sum factors into the form $\alpha_G$ if $B_G$ factors completely. 
\end{proof}

\begin{ex} Since $B_{K_3}$ factors completely as $\{\{(2,3)\}\} \ast \{\{(1,2)\}, \{(1,3)\}\}$, we can write $X_{K_3}$ as the image under $\ch$ of 
    \begin{equation*}
        6(1 - (2\,3))(1 - (1\,2) - (1\,3)).
    \end{equation*}
\end{ex}

Finally, there is a nice characterization of the graphs whose broken circuit complex factors completely.
\begin{thm}[\cite{broken-circuit-complexes}] The broken circuit complex $B_G$ factors completely if and only if $G$ is \emph{chordal}, i.e. has no induced cycles of size larger than $3$. \end{thm}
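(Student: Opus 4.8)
The plan is to pin down, for chordal $G$, a concrete edge order that turns $B_G$ into a \emph{transversal complex} and hence a join of points, and then to run the equivalence backwards using the shape of the minimal non-faces of any completely factoring complex. (Recall that $B_G$ depends on the chosen edge order, so the content of the forward direction is that \emph{some} order makes $B_G$ factor completely.)

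\emph{If $G$ is chordal.} Fix a \emph{perfect elimination ordering}, i.e.\ a labeling $V(G) = \{w_1, \dots, w_n\}$ such that $N(w_i) \cap \{w_{i+1}, \dots, w_n\}$ is a clique for each $i$ (equivalently, $w_1$ has a clique neighborhood, $w_2$ has one in $G - w_1$, and so on); such an ordering exists exactly for chordal $G$. Call $w_i$ the \emph{base} of an edge $\{w_i, w_j\}$ with $i < j$, let $S_i$ be the set of edges with base $w_i$, and order $E(G)$ so that $e < e'$ whenever the base of $e$ has smaller index than that of $e'$ (remaining ties broken arbitrarily). I would show that $A \subseteq E(G)$ lies in $B_G$ if and only if $|A \cap S_i| \le 1$ for all $i$; this exhibits $B_G$ as the join, over the nonempty blocks, of the zero-dimensional complex on each $S_i$, so $B_G$ factors completely. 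For one direction: if $\{w_i, w_j\}, \{w_i, w_k\} \in A$ lie in the same block $S_i$, then $w_j, w_k \in N(w_i) \cap \{w_{i+1}, \dots, w_n\}$, so $\{w_j, w_k\} \in E(G)$; the triangle on these three vertices is a circuit whose largest edge is $\{w_j, w_k\}$ (its base has index $> i$, the other two have base $w_i$), so its broken circuit is $\{\{w_i,w_j\}, \{w_i,w_k\}\} \subseteq A$, whence $A \notin B_G$. For the other: given any circuit $C$ (a cycle, necessarily of length $\ge 3$), let $w_i$ be its least-indexed vertex and $e_1, e_2$ the two edges of $C$ at $w_i$; both have base $w_i$, while every other edge of $C$ joins two vertices of index $> i$ and so has base of index $> i$. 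Hence $e_1, e_2$ precede every other edge of $C$, so $\max C \notin \{e_1, e_2\}$, so both $e_1, e_2$ lie in the broken circuit $C \setminus \{\max C\}$, which therefore meets $S_i$ twice; thus any $A$ with $|A \cap S_i| \le 1$ for all $i$ contains no broken circuit.

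\emph{If $B_G$ factors completely.} Write $B_G = \Delta_1 \ast \cdots \ast \Delta_q$ with each $\Delta_p$ zero-dimensional on a block $S_p$. A subset of $E(G)$ fails to be a face exactly when it contains two edges of some $S_p$, so every minimal non-face of $B_G$ has two elements. But the minimal non-faces of $B_G$ are precisely the inclusion-minimal broken circuits (a set fails to lie in $B_G$ exactly when it contains some broken circuit), and a broken circuit $C \setminus \{\max C\}$ has two elements only when $|C| = 3$. So if $G$ had an induced cycle $C$ of length $m \ge 4$, then $C$ is a circuit and the broken circuit $C \setminus \{\max C\}$, of size $m - 1 \ge 3$, would contain a two-element minimal non-face $\{e, e'\}$ that is the broken circuit of a triangle; then $e, e'$ share a vertex $x$ and the third edge $\{y, z\}$ of that triangle (with $e = \{x, y\}$, $e' = \{x, z\}$) lies in $E(G)$. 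Since $e, e' \in C$ meet at $x$, the vertices $y, z$ are the two $C$-neighbors of $x$, hence non-adjacent on the induced cycle $C$ — so $\{y, z\}$ is a chord of $C$, a contradiction. Thus $G$ has no induced cycle of length $> 3$.

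The main obstacle is guessing the right edge order in the first part — sort by the base of each edge relative to a perfect elimination order — after which the identity $B_G = \{A : |A \cap S_i| \le 1 \text{ for all } i\}$ drops out of the two short arguments above; the converse then needs only the elementary observation that a completely factoring complex has all its minimal non-faces of size two. One should note in passing that the arguments compare edges solely through their bases, so the arbitrary tie-breaking in the edge order plays no role.
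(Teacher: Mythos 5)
Your argument is correct, and there is nothing internal to compare it against: the paper does not prove this statement but imports it from the cited reference, using only the ``chordal $\Rightarrow$ factors completely'' direction in the corollary defining $\alpha_G$. Your route is the standard one for this theorem: group the edges of a chordal graph by the lower endpoint with respect to a perfect elimination ordering, order edges by the index of that base, and verify that $B_G$ is exactly the complex of edge sets meeting each group at most once (your two short arguments for the two inclusions are sound); conversely, a complete factorization forces every inclusion-minimal broken circuit to be a pair, hence the broken circuit of a triangle, and the triangle's third edge becomes a chord of any would-be induced cycle of length at least $4$. Two points are worth making explicit. First, your reading of the statement as asserting the existence of a suitable edge ordering is not just convenient but necessary: for the diamond ($K_4$ minus an edge, which is chordal) with the shared edge taken smallest, the four-cycle contributes a minimal broken circuit of size three, so $B_G$ fails to factor completely for that ordering; existence is also all the paper needs. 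Second, your converse tacitly assumes the zero-dimensional factors have pairwise disjoint vertex sets, so that the minimal non-faces are precisely the within-block pairs; this is the standard convention for joins (and clearly the intended one here), but the paper's definition of the join does not state disjointness, and with overlapping factors a join of point complexes can have minimal non-faces of size three, so the assumption should be flagged.
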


\subsection{Hyperforests} \label{sec:hyperforests}
\begin{defn} A \emph{hypergraph} is a pair $(V,E)$ where $V$ is a set and $E \subseteq 2^V \setminus \{\emptyset\}$. \end{defn}
We think of hypergraphs as graphs whose edges may contain more than two vertices.
\begin{defn}[\cite{stanley-chromatic-2}, \S3.3] A \emph{coloring} of a hypergraph $G = (V,E)$ is again a function $\kappa : V \to \NN$, and $\kappa$ is \emph{proper} if $|\kappa(e)| > 1$ for all $e \in E$ with $|e| > 1$. That is, there are no monochromatic edges under $\kappa$ except perhaps singletons. The \emph{chromatic symmetric function} of $G$ is again $X_G = \sum_{\kappa} x_\kappa$ where $\kappa$ runs over proper colorings of $G$.
\end{defn}

As pointed out in \cite{stanley-chromatic-2}, one might at first want to a define a proper coloring of a hypergraph $G$ to be proper if $\kappa(v) \neq \kappa(w)$ whenever $v \neq w$ are contained in a common edge, but this would lead to nothing new: such colorings are simply proper colorings of the graph on $V$ with an edge $(v,w)$ whenever $v, w$ are contained in a common edge of $G$.

\begin{ex} If $G$ is the hypergraph with vertices $[n]$ and a single edge $[n]$, then $X_G = p_1^n - p_n$. This is not the chromatic symmetric function of any graph: such a graph would have to have all non-constant colorings proper, hence no edges, but then the constant colorings would also be proper. \end{ex}

\begin{defn} The \emph{line graph} of a hypergraph $G$ is the graph $L(G)$ with vertex set $E(G)$, and an edge $(e,e')$ if and only if $e \cap e' \neq \emptyset$ for distinct edges $e, e' \in E(G)$. Say $G$ is \emph{edge $k$-colorable} if $L(G)$ admits a proper $k$-coloring. \end{defn}

\begin{ex} Suppose $G$ is a \emph{linear interval hypergraph}, meaning that $L(G)$ is a disjoint union of paths and if $e, e' \in E(G)$ are distinct, then $|e \cap e'| \leq 1$. Then $G$ is edge 2-colorable. By contrast, the graph $G$ with edges $\{1,2\}, \{1,3\}, \{1,4\}$ has $L(G) = K_3$, and is not edge 2-colorable.
\end{ex}

Taylor conjectured that linear interval hypergraphs are Schur positive \cite[Conjecture B]{jair-hypertree-chrom-sym}. In this section we prove Taylor's conjecture, and more generally that edge 2-colorable hyperforests are Schur positive---the hypergraph with edges $\{1,2,3\}, \{1,1'\}, \{2,2'\}, \{3,3'\}$ being an example of an edge 2-colorable hyperforest which is not a linear interval hypergraph.  Once we check that the machinery of \S\ref{subsec:chromatic-operators-forests} still works for hypergraphs, the proof will be essentially the short argument of Example~\ref{ex:path}.
\begin{defn}
A \emph{path} in a hypergraph is a sequence $v_1, e_1, v_2, e_2, \ldots, v_k, e_k, v_{k+1}$ where the $v_i$ are vertices and the $e_i$ are edges with $v_i \in e_i\cap e_{i-1}$, all entries in the sequence are distinct except perhaps that $v_{k+1} = v_1$, and $k > 1$. If $v_1 = v_{k+1}$ then the path is a \emph{cycle}. A hypergraph with no cycles is a \emph{hyperforest}, and a connected hyperforest is a \emph{hypertree}. (A hypergraph $G$ is connected if there is no nontrivial disjoint union $V(G) = V_1 \cup V_2$ such that every edge is contained in either $V_1$ or $V_2$.)
\end{defn}

\begin{ex} The hypergraph $G$ with edges $e_1 = \{1,2,3\}$ and $e_2 = \{2,3,4\}$ is \emph{not} a hypertree even though $L(G)$ is a tree, because it contains the cycle $2, e_1, 3, e_2, 2$. \end{ex}



The \emph{support} of $\sigma \in S_n$ is $\supp(\sigma) = \{i \in [n] : \sigma(i) \neq i\}$.
\begin{lem} \label{lem:hypertree-cycle-type}
If $T$ is a hypertree on $[n]$ and $\Sigma \subseteq S_n$ is a collection of cycles whose supports are exactly the edges of $T$, then $\prod_{e \in \Sigma} \sigma$ is an $n$-cycle, regardless of the order in which the product is taken. \end{lem}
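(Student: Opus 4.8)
The plan is to reduce the statement to D\'enes's lemma (Lemma~\ref{lem:tree-cycle-type}) by replacing each cycle in $\Sigma$ with a path of transpositions along its support. We may assume $n \geq 2$: when $n = 1$ the product is empty (edges have size $\geq 2$, being supports of cycles) and equals the identity of $S_1$, a $1$-cycle. For $n \geq 2$, connectedness of $T$ forces $\bigcup_{e \in E(T)} e = [n]$. I would also record the elementary fact that hyperforests are \emph{linear}, i.e.\ distinct edges meet in at most one vertex: if $a \neq b$ both lay in $e \cap e'$ with $e \neq e'$, then $a, e, b, e', a$ would be a cycle of $T$.

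Next, write each $\sigma \in \Sigma$ as a cyclic word $(a_1\,a_2\,\cdots\,a_s)$ on $\supp(\sigma)$, so that $\sigma = (a_1\,a_2)(a_2\,a_3)\cdots(a_{s-1}\,a_s)$; the transpositions appearing here are exactly the edges of the path $a_1 - a_2 - \cdots - a_s$ on the vertex set $\supp(\sigma)$. Substituting these factorizations rewrites $\prod_{\sigma \in \Sigma}\sigma$, taken in \emph{any} order, as a product $\prod_{(i,j)\in E(H)}(i\,j)$ in a corresponding order, where $H$ is the graph on $[n]$ whose edges are all of these transpositions. Linearity makes the path edge sets of distinct cycles disjoint, so $H$ is simple with $|E(H)| = \sum_{e \in E(T)}(|e|-1)$; and $H$ is connected, since each path is connected on its hyperedge and $T$ is connected. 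Hence, once $H$ is known to be a tree, D\'enes's lemma --- which holds for arbitrary edge orderings --- shows $\prod_{(i,j)\in E(H)}(i\,j)$ is an $n$-cycle for every ordering, and therefore so is $\prod_{\sigma\in\Sigma}\sigma$.

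Everything thus comes down to showing $\sum_{e \in E(T)}(|e|-1) = n-1$ for a hypertree $T$, since a connected graph on $n$ vertices with $n-1$ edges is automatically a tree; I expect this count to be the main obstacle. I would prove it by induction on $n$, the key ingredient being the ``pendant vertex'' lemma: \emph{every hyperforest with at least two edges has a vertex lying in exactly one edge}. For this, observe that if every vertex of $\bigcup_e e$ lay in at least two edges, then the bipartite incidence graph of $T$ (with parts $\bigcup_e e$ and $E(T)$) would have minimum degree at least $2$ --- the edge side because $|e| \geq 2$ --- hence would contain a cycle; but a cycle in the incidence graph is precisely a Berge cycle of $T$, contradicting that $T$ is a hyperforest. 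Granting the pendant vertex lemma: a hypertree with one edge is $[n]$, for which the identity is clear; given $\geq 2$ edges, pick a vertex $v$ lying in a unique edge $e^*$ and let $T'$ be $T$ with $v$ deleted (replacing $e^*$ by $e^* \setminus \{v\}$, or deleting $e^*$ if $|e^*| = 2$). Using linearity and the Berge-cycle criterion, $T'$ is again a hypertree, now on $n-1$ vertices, and $\sum_e(|e|-1)$ drops by exactly $1$; the inductive hypothesis for $T'$ gives the identity for $T$.
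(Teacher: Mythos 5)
Your proof is correct, but it runs in the opposite direction from the paper's. The paper argues by induction on $n$ directly at the level of permutations: it peels off a leaf $v$ of the hypertree, writes the product as $\sigma(v_1\cdots v_k\,v)\tau$, uses the inductive hypothesis to know $\rho=\sigma(v_1\cdots v_k)\tau$ is an $(n-1)$-cycle avoiding $v$, and then computes $\sigma(v_1\cdots v_k\,v)\tau=\rho\,(\tau^{-1}(v_k)\;v)$, which is visibly an $n$-cycle. You instead reduce to D\'enes's lemma by expanding each cycle as $(a_1a_2)(a_2a_3)\cdots(a_{s-1}a_s)$, i.e.\ a Hamiltonian path on its support, and then verify that the union $H$ of these paths is a tree: linearity of hyperforests (distinct edges meet in at most one vertex) gives simplicity and the edge count $|E(H)|=\sum_e(|e|-1)$, connectivity of $T$ gives connectivity of $H$, and your pendant-vertex induction gives $\sum_e(|e|-1)=n-1$; all of these steps check out, including the incidence-graph argument for the pendant vertex and the verification that deleting it preserves the hypertree property. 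What the two approaches buy: yours is a clean reduction using only standard hypergraph counting plus D\'enes's lemma for arbitrary edge orderings (and you correctly only need it for the particular orderings produced by the substitution), whereas the paper's argument is self-contained and in fact yields D\'enes's lemma as the special case of ordinary trees --- note that the paper's proof of Lemma~\ref{lem:tree-cycle-type} says ``see \cite{moszkowski}, or Lemma~\ref{lem:hypertree-cycle-type} below,'' so if your proof were adopted one would have to lean on the external reference for D\'enes's lemma to avoid circularity, losing that by-product. Two cosmetic points: the identity $(a_1\cdots a_s)=(a_1a_2)\cdots(a_{s-1}a_s)$ depends on the composition convention (with the other convention you reverse the word, which changes nothing in the argument), and it is worth saying explicitly that edges of $T$ have size at least $2$ because they are supports of cycles, since you use $|e|\geq 2$ in the incidence-graph degree bound.
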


\begin{proof} Any hypertree has a leaf, as can be seen using the same argument one would use for a tree: starting at an arbitrary vertex and trying to follow a path for as long as possible, we are eventually forced to repeat a vertex or edge that has already been used; in the former case we have found a cycle, and in the latter we have found a leaf.

So, let $v$ be a leaf of $T$. Form a new hypergraph $T'$ by removing $v$ from $V(T)$ and from the edge $\{v_1, \ldots, v_k, v\}$ it was in, and deleting the modified edge if it becomes a singleton. Evidently $T'$ is still a hyperforest, and is connected because $v$ was a leaf. Write the product of cycles associated to $T$ as $\sigma (v_1\, \cdots \, v_k\, v) \tau$ where $\sigma$ and $\tau$ are products of cycles. By induction, $\rho = \sigma (v_1\, \cdots \, v_k) \tau$ is an $(n-1)$-cycle with support $[n] \setminus \{v\}$. Now
\begin{align*}
    \sigma (v_1\, \cdots \, v_k\, v) \tau &= \rho \tau^{-1} (v_k\,\, v) \tau = \rho (\tau^{-1}(v_k)\,\, \tau^{-1}(v))\\
    & = \rho(\tau^{-1}(v_k)\,\, v) \quad \text{(since $v \notin \supp(\tau)$)}
\end{align*}
Since $\rho$ is an $(n-1)$-cycle with support containing $\tau^{-1}(v_k)$ but not $v$, the product $\rho(\tau^{-1}(v_k)\,\, v)$ is an $n$-cycle as desired.
\end{proof}

\begin{cor} \label{cor:hyperforest-cycle-type} If $F$ is a hyperforest and $\Sigma$ is a collection of cycles with supports $E(F)$, then $\cyc\left(\prod_{\sigma \in \Sigma} \sigma\right) = \type(F)$. \end{cor}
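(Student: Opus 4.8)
The plan is to reduce Corollary~\ref{cor:hyperforest-cycle-type} to Lemma~\ref{lem:hypertree-cycle-type} by decomposing the hyperforest into its connected components. First I would observe that a hyperforest $F$ is, by definition, a disjoint union of hypertrees $T_1, \ldots, T_m$, where $T_i$ is a hypertree on a vertex set $V_i$ and the $V_i$ partition $V(F)$. (Here ``disjoint union'' means no edge of $F$ meets two distinct $V_i$, which is exactly the negation of connectedness in the definition given in the excerpt, applied recursively; the $V_i$ are the connected components of $F$.) Note that $\type(F)$ is then by definition the partition whose parts are $|V_1|, \ldots, |V_m|$.

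Next I would analyze the product $\prod_{\sigma \in \Sigma} \sigma$. Partition $\Sigma$ into $\Sigma_1, \ldots, \Sigma_m$ according to which component the support of each cycle lies in: since each $\sigma \in \Sigma$ has support equal to some edge $e \in E(F)$, and each edge lies in exactly one component $V_i$, this is well-defined and $\Sigma = \bigsqcup_i \Sigma_i$, with the supports of the cycles in $\Sigma_i$ being exactly the edges of $T_i$. The cycles in $\Sigma_i$ all have support contained in $V_i$, so for $i \neq j$ the cycles in $\Sigma_i$ commute with those in $\Sigma_j$ (disjoint supports). Hence $\prod_{\sigma \in \Sigma} \sigma = \prod_{i=1}^m \left(\prod_{\sigma \in \Sigma_i} \sigma\right)$, a product of permutations with pairwise disjoint supports, regardless of the original order (after grouping commuting factors). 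By Lemma~\ref{lem:hypertree-cycle-type} applied to each hypertree $T_i$ (viewed on the vertex set $V_i$, reindexed as $[|V_i|]$ if one insists on the literal hypothesis), $\prod_{\sigma \in \Sigma_i} \sigma$ is a $|V_i|$-cycle supported on $V_i$. Therefore $\cyc\left(\prod_{\sigma \in \Sigma} \sigma\right)$ has parts $|V_1|, \ldots, |V_m|$, which is $\type(F)$.

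I expect the only mild subtlety is making sure the order-independence claim is handled cleanly: Lemma~\ref{lem:hypertree-cycle-type} already gives order-independence \emph{within} each $\Sigma_i$, and the cross-component factors commute, so an arbitrary linear order on $\Sigma$ can be rearranged into the grouped form without changing the product. A second bookkeeping point is the reindexing: Lemma~\ref{lem:hypertree-cycle-type} is stated for hypertrees on $[n]$, but a component $T_i$ sits on an arbitrary subset $V_i \subseteq [n]$; this is harmless since the statement and proof are invariant under relabeling vertices by any injection, and the resulting cycle is supported precisely on $V_i$. Neither point is a real obstacle — the corollary is essentially immediate from the lemma once the components are separated — so this is less a theorem with a hard core than a routine globalization of the connected case.
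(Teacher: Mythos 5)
Your argument is correct and is exactly the reasoning the paper leaves implicit: the corollary is stated without proof as an immediate consequence of Lemma~\ref{lem:hypertree-cycle-type}, obtained by splitting the hyperforest into its connected hypertree components, noting that cycles supported in different components commute, and applying the lemma componentwise (isolated vertices contributing fixed points, hence parts equal to $1$). Your handling of the order-independence and relabeling bookkeeping is fine, so there is nothing to add.
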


The power sum expansion of $X_G$ for hypergraphs can be described in exactly the same way as for graphs.
\begin{thm}[\cite{stanley-chromatic-2}, Theorem 3.5] \label{thm:power-sum-expansion-hypergraphs}
$\displaystyle X_G = \sum_{S \subseteq E(G)} (-1)^{|S|} p_{\type(G_S)}$ for any hypergraph $G$.
\end{thm}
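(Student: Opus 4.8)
The plan is a direct inclusion–exclusion over subsets of the edges, mirroring Stanley's proof of the graph case (Theorem~\ref{thm:power-sum-expansion}); allowing edges to be larger changes nothing essential. Throughout I assume every edge of $G$ has size at least two, which costs nothing: a singleton edge imposes no condition on colorings, so it affects neither $X_G$ nor any $\type(G_S)$, and the sum over $S \subseteq E(G)$ should be read accordingly. The first step is to write the properness condition multiplicatively: for a coloring $\kappa \colon V(G) \to \NN$, the product $\prod_{e \in E(G)} \bigl(1 - [\kappa \text{ is constant on } e]\bigr)$ equals $1$ if $\kappa$ is proper and $0$ otherwise, so
\[
X_G \;=\; \sum_{\kappa \colon V(G) \to \NN} x_\kappa \prod_{e \in E(G)} \bigl(1 - [\kappa \text{ is constant on } e]\bigr).
\]

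Next I would expand the product and interchange the two summations, obtaining
\[
X_G \;=\; \sum_{S \subseteq E(G)} (-1)^{|S|} \sum_{\substack{\kappa \colon V(G) \to \NN \\ \kappa \text{ constant on every } e \in S}} x_\kappa .
\]
A coloring is constant on every edge in $S$ exactly when it is constant on every connected component of the hypergraph $G_S = (V(G), S)$ — this is precisely the definition of the connected-component partition, the relation ``$u$ and $w$ lie in a common edge of $S$'' generating it. If the components of $G_S$ have sizes $\lambda_1, \lambda_2, \dots$, so that $\type(G_S) = (\lambda_1, \lambda_2, \dots)$, then a coloring constant on the components is the same as an independent choice of one color for each component, and summing the corresponding monomials factors as $\prod_i \bigl(\sum_c x_c^{\lambda_i}\bigr) = \prod_i p_{\lambda_i} = p_{\type(G_S)}$. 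Substituting this back yields the claimed formula (one can sanity-check the $S = \emptyset$ term: it is $p_1^{|V(G)|}$, the sum of $x_\kappa$ over all colorings).

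I do not expect a genuine obstacle here: the whole argument is the graph-case computation, and the only place hypergraph-ness enters is the bookkeeping of connected components, which behaves exactly as for ordinary graphs. The one point worth stating carefully is the singleton-edge convention above — equivalently, that adjoining a size-one edge to $S$ leaves both $\type(G_S)$ and the inner sum unchanged, so such edges would merely contribute cancelling pairs. If a more structural proof were wanted, the identity also drops out of Möbius inversion over the lattice of set partitions of $V(G)$, grouping proper colorings by their partition into color classes, but the edge-subset inclusion–exclusion above is the most economical route.
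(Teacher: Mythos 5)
Your argument is correct and is the standard inclusion--exclusion computation; the paper does not prove this statement itself but cites it to Stanley, and your proof is the same argument as in the graph case (Theorem~\ref{thm:power-sum-expansion}), with the connected components of $G_S$ playing the identical role. One caveat on your closing remark: the claim that singleton edges ``would merely contribute cancelling pairs'' is not an equivalent reading of the convention --- if a singleton edge $e_0$ were kept in the index set, pairing each $S \not\ni e_0$ with $S \cup \{e_0\}$ cancels \emph{every} term, so the right-hand side would collapse to $0$ rather than $X_G$; hence the identity genuinely requires discarding singleton edges (which, as you correctly note, changes neither $X_G$ nor any $\type(G_S)$), not that it holds verbatim with them included.
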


\begin{defn} Given $e \subseteq [n]$, let
\begin{equation*}
\alpha_{e} = 1 - \frac{1}{(|e|-1)!}\sum_{\sigma} \sigma,
\end{equation*}
where $\sigma$ runs over all cycles in $S_n$ with support $e$. Given a hyperforest $F$ on $[n]$ and an ordering $\pi$ of its edges, define the \emph{hyperforest operator}
\begin{equation*}
\alpha_{F, \pi} = n!\prod_{e \in E(F)} \alpha_e,
\end{equation*}
with the product being taken in the order dictated by $\pi$.
\end{defn}

\begin{ex}
If $F$ has vertices $\{1,2,3,4\}$ and edges $\{1,2,3\}$ and $\{3,4\}$, then
\begin{equation*}
\alpha_F = 4!\left(1 - \frac{1}{2}(1\,2\,3) - \frac{1}{2}(1\,3\,2)\right)(1 - (3\,4)).
\end{equation*}
\end{ex}

\begin{lem} \label{lem:hyperforest-operator-ch} $\ch \alpha_{F, \pi} = X_F$ for any edge ordering $\pi$ of a hyperforest $F$. \end{lem}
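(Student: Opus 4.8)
The plan is to mimic the derivation of Theorem~\ref{thm:chromatic-operator}, replacing Corollary~\ref{cor:forest-cycle-type} by its hypergraph analogue Corollary~\ref{cor:hyperforest-cycle-type}. First I would expand the product $n!\prod_{e \in E(F)} \alpha_e$ by multilinearity. Writing $E(F) = \{e_1, \ldots, e_m\}$ in the order given by $\pi$, each factor $\alpha_{e_k} = 1 - \frac{1}{(|e_k|-1)!}\sum_{\supp(\sigma)=e_k}\sigma$ contributes either the term $1$ or a term $-\frac{1}{(|e_k|-1)!}\sigma_k$ for some cycle $\sigma_k$ supported on $e_k$. Expanding, the product becomes a sum over subsets $S \subseteq E(F)$ and, for each $S$, over choices of a cycle $\sigma_e$ supported on $e$ for each $e \in S$:
\begin{equation*}
\alpha_{F,\pi} = n!\sum_{S \subseteq E(F)} (-1)^{|S|} \prod_{e \in S}\frac{1}{(|e|-1)!} \sum_{(\sigma_e)_{e\in S}} \prod_{e \in S} \sigma_e,
\end{equation*}
where the inner product is taken in the order inherited from $\pi$. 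For a fixed $S$, the number of ways to choose the tuple $(\sigma_e)_{e \in S}$ is exactly $\prod_{e \in S}(|e|-1)!$, so the normalizing constants are designed precisely to make the coefficient of each resulting group-algebra element work out to $(-1)^{|S|}$ after averaging.

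Next I apply $\ch$. For each $S$ and each choice of cycles $(\sigma_e)_{e \in S}$, the induced subhypergraph $F|_S = (V(F), S)$ is a hyperforest (a sub-hypergraph of a hyperforest has no cycles), and its connected components are hyperforests whose edge sets partition $S$. Corollary~\ref{cor:hyperforest-cycle-type} applies componentwise: the product $\prod_{e \in S}\sigma_e$, taken in any order, has cycle type equal to $\type(F|_S) = \type(F_S)$. Hence $\ch\!\left(\prod_{e\in S}\sigma_e\right) = \frac{1}{n!}p_{\type(F_S)}$, independently of the chosen cycles and of the order. Summing, the averaging over the $\prod_{e\in S}(|e|-1)!$ tuples collapses, and we get
\begin{equation*}
\ch\,\alpha_{F,\pi} = \sum_{S \subseteq E(F)} (-1)^{|S|} p_{\type(F_S)},
\end{equation*}
which equals $X_F$ by Theorem~\ref{thm:power-sum-expansion-hypergraphs}. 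In particular the answer does not depend on $\pi$.

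The main obstacle is simply making sure Corollary~\ref{cor:hyperforest-cycle-type} genuinely applies to every term in the expansion — i.e.\ that for each $S \subseteq E(F)$ the chosen family of cycles has supports \emph{exactly} $S$ (not a proper subset), so that the hypothesis ``$\Sigma$ is a collection of cycles with supports $E(F|_S)$'' is met. This is automatic because a cycle supported on $e$ moves every element of $e$, so distinct edges give cycles with distinct (though possibly overlapping) supports, and the multiset of supports is exactly $S$; the corollary is stated for an arbitrary hyperforest and arbitrary such collection, so it covers each $F|_S$. One small bookkeeping point to verify is that $\cyc$ of a product is insensitive to the order of the factors here: this is guaranteed by Lemma~\ref{lem:hypertree-cycle-type}, which asserts the $n$-cycle conclusion ``regardless of the order,'' applied to each connected component of $F|_S$. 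Beyond that, the argument is the routine bookkeeping of matching normalization constants to counts of cycle choices.
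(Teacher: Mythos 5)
Your proof is correct and follows essentially the same route as the paper: expand $\alpha_{F,\pi}$ by multilinearity over subsets $S \subseteq E(F)$, apply Corollary~\ref{cor:hyperforest-cycle-type} to each term (noting that $F_S$ is again a hyperforest, so the cycle type is $\type(F_S)$ regardless of order and of the chosen cycles), and observe that the $\prod_{e\in S}(|e|-1)!$ normalization collapses the sum to $(-1)^{|S|}p_{\type(F_S)}$, whence Theorem~\ref{thm:power-sum-expansion-hypergraphs} finishes the argument. No gaps; the bookkeeping points you flag are exactly the ones the paper's proof relies on implicitly.
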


\begin{proof}
$\alpha_{F,\pi}$ is the sum over all subsets $S = \{e_1 < \cdots < e_p\} \subseteq E(F)$ of the expressions
\begin{equation} \label{eq:hyperforest-operator-ch}
(-1)^{|S|} n!\sum_{\sigma_1, \ldots, \sigma_p} \sigma_1 \cdots \sigma_p \prod_{e \in S} \frac{1}{(|e|-1)!}
\end{equation}
where $\sigma_1, \ldots, \sigma_p$ range over the cycles with supports $e_1, \ldots, e_p$. By Corollary~\ref{cor:hyperforest-cycle-type}, the image under $\ch$ of \eqref{eq:hyperforest-operator-ch} is
\begin{equation*}
(-1)^{|S|}  \sum_{\sigma_1, \ldots, \sigma_p} p_{\type(F_S)} \prod_{e \in S} \frac{1}{(|e|-1)!} = (-1)^{|S|} p_{\type(F_S)}.
\end{equation*}
The lemma now follows by Theorem~\ref{thm:power-sum-expansion-hypergraphs}.
\end{proof}

In what follows, we consider $\CC[S_n]$ as an inner product space by taking the elements of $S_n$ to form an orthonormal basis, writing $(\cdot, \cdot)$ for the inner product. Given $\alpha = \sum_{\sigma} c_{\sigma} \sigma \in \CC[S_n]$, let $\alpha^* = \sum_{\sigma} \overline{c}_{\sigma} \sigma^{-1}$.
\begin{prop} \label{prop:hermitian} Viewed as the operator $x \mapsto \alpha x$, the adjoint of $\alpha \in \CC[S_n]$ is $\alpha^*$. \end{prop}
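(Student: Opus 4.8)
The plan is to reduce to group-element basis vectors and exploit the fact that left multiplication by an element of $S_n$ is a unitary operator. First I would note that it suffices to verify the identity $L_g^{\,*} = L_{g^{-1}}$ for each $g \in S_n$, where $L_\alpha : x \mapsto \alpha x$ denotes the left-multiplication operator. Indeed, the assignment $\alpha \mapsto L_\alpha^{\,*}$ is conjugate-linear in $\alpha$ (composition of the linear map $\alpha \mapsto L_\alpha$ with the conjugate-linear map $T \mapsto T^*$), and $\alpha \mapsto L_{\alpha^*}$ is conjugate-linear as well, since $\alpha \mapsto \alpha^*$ is visibly conjugate-linear; as $S_n$ spans $\CC[S_n]$ and $g^* = g^{-1}$ for $g \in S_n$, equality on the basis gives equality for all $\alpha$.

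For the basis case, $L_g$ permutes the orthonormal basis $S_n$ of $\CC[S_n]$ — it sends $h$ to $gh$, which is a bijection of $S_n$ — so $L_g$ is unitary and hence $L_g^{\,*} = L_g^{-1} = L_{g^{-1}}$, as needed. Alternatively, bypassing the conjugate-linearity bookkeeping, one can simply expand in the basis: writing $\alpha = \sum_g c_g g$, $x = \sum_h a_h h$, $y = \sum_k b_k k$, the pairing $(\alpha x, y)$ equals $\sum_{g,h,k} c_g a_h \overline{b_k}\,(gh,k) = \sum_{g,h} c_g a_h \overline{b_{gh}}$, while $(x, \alpha^* y) = \sum_{g,h,k} a_h c_g \overline{b_k}\,(h, g^{-1}k) = \sum_{g,h} a_h c_g \overline{b_{gh}}$, using $(gh,k) = [gh=k]$ and $(h, g^{-1}k) = [h = g^{-1}k] = [gh = k]$. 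The two expressions agree, so $(\alpha x, y) = (x, \alpha^* y)$ for all $x, y$, which is the claim.

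There is essentially no real obstacle here; the proposition is a standard fact about the left regular representation. The only points requiring care are the sesquilinearity convention — keeping the complex conjugation on the second argument of $(\cdot,\cdot)$ consistently with the definition $\alpha^* = \sum_\sigma \overline{c_\sigma}\,\sigma^{-1}$ — and the elementary observation that $h = g^{-1}k$ is equivalent to $gh = k$, which is what makes the two Kronecker deltas coincide.
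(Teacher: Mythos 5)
Your proof is correct and follows essentially the same route as the paper: reduce by (conjugate-)linearity to a single group element $\sigma \in S_n$ and verify the adjoint identity via the Kronecker-delta computation $(\sigma\rho,\tau) = (\rho,\sigma^{-1}\tau)$ on the orthonormal basis. The unitarity phrasing and the explicit expansion are just slightly more detailed packagings of the paper's one-line check.
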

\begin{proof}
It suffices to check that the adjoint of $\sigma \in S_n$ is $\sigma^{-1}$, which follows from the computation
\begin{equation*}
(\sigma\rho, \tau) = \delta_{\sigma\rho , \tau} = \delta_{\rho, \sigma^{-1}\tau } = (\rho, \sigma^{-1}\tau )
\end{equation*}
for all $\rho, \tau \in S_n$.
\end{proof}

\begin{lem} \label{lem:positive-semidefinite-edge-operator} For any $e \subseteq [n]$, the operator $\alpha_e$ is positive semidefinite. \end{lem}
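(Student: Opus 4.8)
The plan is to show that $\alpha_e$ acts on $\CC[S_n]$ as a self-adjoint operator whose spectrum lies in $[0,2]$.

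Set $k = |e|$. First I would record two elementary facts about the set $\mathcal{C}_e$ of cycles in $S_n$ with support $e$: it has exactly $(k-1)!$ elements (the number of cyclic orderings of $e$), and inversion $\sigma \mapsto \sigma^{-1}$ is an involution of $\mathcal{C}_e$. Consequently $\beta \eqdef \frac{1}{(k-1)!}\sum_{\sigma \in \mathcal{C}_e}\sigma$ is the average of the elements of $\mathcal{C}_e$, it satisfies $\beta^\ast = \beta$, and $\alpha_e = 1 - \beta$ with $\alpha_e^\ast = \alpha_e$. By Proposition~\ref{prop:hermitian}, both $\beta$ and $\alpha_e$ act on $\CC[S_n]$ as self-adjoint operators; in particular their eigenvalues are real, so it suffices to bound the eigenvalues of $\alpha_e$ below by $0$, equivalently to show every eigenvalue of $\beta$ is at most $1$.

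The key step is the observation that for each $\sigma \in S_n$, left multiplication $x \mapsto \sigma x$ permutes the orthonormal basis $S_n$ of $\CC[S_n]$, so it is unitary and has operator norm $1$. Hence $\beta$, being an average of $(k-1)!$ unitaries, has operator norm at most $1$ by the triangle inequality; since $\beta$ is self-adjoint this forces its spectrum into $[-1,1]$. Therefore the spectrum of $\alpha_e = 1 - \beta$ lies in $[0,2]$, and in particular $\alpha_e$ is positive semidefinite.

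I do not expect a genuine obstacle here --- the argument is soft, the only points requiring care being the self-adjointness of $\alpha_e$ (via the bijection $\sigma \mapsto \sigma^{-1}$ on $\mathcal{C}_e$) and the recognition of $\beta$ as an average of unitaries. If one wanted the eigenvalues explicitly, one could instead note that $\sum_{\sigma\in\mathcal{C}_e}\sigma$ lies in, and is a conjugacy-class sum of, the subalgebra $\CC[S_k] \subseteq \CC[S_n]$ of permutations supported on $e$, hence is central there; since $\CC[S_n]$ is free as a left $\CC[S_k]$-module, the eigenvalues of $\beta$ on $\CC[S_n]$ coincide with those on $\CC[S_k]$, namely the central-character values $\chi^\mu(\text{$k$-cycle})/\chi^\mu(1)$ for $\mu \vdash k$. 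By the Murnaghan--Nakayama rule these vanish unless $\mu$ is a hook $(k-r,1^r)$, in which case the value is $(-1)^r/\binom{k-1}{r} \in [-1,1]$, recovering the bound. But the soft argument above already suffices.
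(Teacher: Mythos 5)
Your proof is correct, but it takes a genuinely different (and softer) route than the paper. The paper computes $\ch(\alpha_e)$ and shows, via the explicit Schur expansions of $p_1^m$ and $p_m$ coming from the Murnaghan--Nakayama rule, that $|e|!\,\ch(\alpha_e)=p_1^{|e|}-p_{|e|}$ is Schur positive; combined with centrality (of $\alpha_e$ viewed inside the subalgebra generated by permutations supported on $e$) this gives nonnegative eigenvalues, and then Proposition~\ref{prop:hermitian} upgrades this to positive semidefiniteness. You instead observe that left multiplication by any group element is unitary for the inner product with orthonormal basis $S_n$, so $\beta=\frac{1}{(|e|-1)!}\sum_{\sigma}\sigma$ is an average of unitaries and hence has operator norm at most $1$; together with self-adjointness of $\beta$ (the cycle set is closed under inversion, real coefficients) this pins the spectrum of $\alpha_e=1-\beta$ into $[0,2]$. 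Your argument avoids all symmetric-function identities, and as a bonus it immediately yields the paper's remark that $1-\tfrac12\sigma-\tfrac12\sigma^{-1}$ is positive semidefinite for any cycle $\sigma$, and more generally applies to any real convex combination of group elements closed under inversion; it also sidesteps the slight looseness in the paper's phrase ``$\alpha\in Z(\CC[S_n])$'', which is literally true only when $e=[n]$ (the honest statement is centrality in $\CC[S_e]$, plus freeness of $\CC[S_n]$ over that subalgebra, exactly as in your parenthetical sketch). What the paper's route buys is the explicit eigenvalue data tied to the $\ch$ machinery it uses throughout, which your closing hook-character computation $\chi^{(k-r,1^r)}(k\text{-cycle})/\chi^{(k-r,1^r)}(1)=(-1)^r/\binom{k-1}{r}$ essentially recovers.
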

\begin{proof} Using, say, the Murnaghan-Nakayama rule, one can obtain the explicit expansions
\begin{equation*}
p_m = \sum_{k=1}^m (-1)^k s_{(k, 1^{m-k})} \qquad \text{and} \qquad p_1^m = \sum_{\lambda \vdash m} f^{\lambda} s_{\lambda},
\end{equation*}
where $f^{\lambda} = \chi^\lambda(1)$ is the number of standard Young tableaux of shape $\lambda$. These expansions make it clear that $|e|!\ch(\alpha_e) = p_1^{|e|} - p_{|e|}$ is Schur positive, and since $\alpha \in Z(\CC[S_n])$, this implies $\alpha_e$ has nonnegative eigenvalues. Proposition~\ref{prop:hermitian} shows that $\alpha_e$ is Hermitian, so it is positive semidefinite.
\end{proof}

\begin{rem} The proof of Lemma~\ref{lem:hyperforest-operator-ch} actually shows that $\ch(\alpha_{F, \pi}) = X_F$ holds for \emph{any} choice of $\alpha_e = 1 - \sum_{\sigma} c_{\sigma} \sigma$ where $\sigma$ ranges over cycles with support $e$ and $\sum_{\sigma} c_{\sigma} = 1$. For our applications we want $\alpha_e$ to be positive semidefinite, and we made the choice of $\alpha_e$ we did because it allows for the proof of Lemma~\ref{lem:positive-semidefinite-edge-operator} given above, which seems natural in our setting. However, it is worth noting that a more direct computation will show that $1 - \frac{1}{2}\sigma - \frac{1}{2}\sigma^{-1}$ is also positive semidefinite for any cycle $\sigma$.
\end{rem}

We can now prove one of the main theorems of this section.
\begin{thm} \label{thm:edge-bipartite-hypergraphs} Any edge 2-colorable hyperforest $G$ is Schur positive. \end{thm}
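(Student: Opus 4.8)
The plan is to run the argument of Example~\ref{ex:path} one more time, now using the edge $2$-coloring of $G$ to split the chromatic operator into just two positive semidefinite factors. Recall from Lemma~\ref{lem:hyperforest-operator-ch} that $\ch(\alpha_{G,\pi}) = X_G$ for \emph{every} edge ordering $\pi$, so it suffices to produce one ordering $\pi$ for which $\alpha_{G,\pi}$ is positive semistable and then quote Corollary~\ref{cor:eigenvalues}.

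First I would translate the hypothesis: a proper $2$-coloring of $L(G)$ is exactly a partition $E(G) = E_1 \sqcup E_2$ such that any two edges lying in the same class $E_i$ are disjoint subsets of $[n]$. Fixing such a partition, I would set $\beta_i = \prod_{e \in E_i} \alpha_e$. Because the edges in $E_i$ are pairwise disjoint, the factors $\alpha_e$ with $e \in E_i$ are supported on permutations with pairwise disjoint supports, hence pairwise commute in $\CC[S_n]$; each is positive semidefinite by Lemma~\ref{lem:positive-semidefinite-edge-operator}, and in particular Hermitian. A commuting family of Hermitian operators is simultaneously unitarily diagonalizable, and in a common eigenbasis $\beta_i$ acts diagonally with entries that are products of nonnegative numbers. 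Thus $\beta_i$ is positive semidefinite, and the product defining it does not depend on the order of its factors.

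Next I would take $\pi$ to be any edge ordering that lists all edges of $E_1$ before all edges of $E_2$, so that $\alpha_{G,\pi} = n!\,\beta_1\beta_2$ — a positive scalar times a product of two positive semidefinite operators. By \cite[Corollary 7.6.2]{horn-johnson} such a product has only real nonnegative eigenvalues (it is similar to $\beta_1^{1/2}\beta_2\beta_1^{1/2}$, which is positive semidefinite), and $\alpha_{G,\pi}$ has real coefficients, so $\alpha_{G,\pi} \in \RR[S_n]$ is positive semistable. Corollary~\ref{cor:eigenvalues} then gives that $\ch(\alpha_{G,\pi}) = X_G$ is Schur positive.

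The one point I would stress is the reason the hypothesis requires \emph{edge $2$-colorability} and not merely edge $k$-colorability for some larger $k$: a product of three or more positive semidefinite operators can have genuinely complex eigenvalues, so it is essential that two color classes suffice to absorb every $\alpha_e$ into exactly two positive semidefinite factors $\beta_1$ and $\beta_2$. Everything else — the dictionary between colorings of $L(G)$ and disjointness of hyperedges, the commuting-operators argument that $\beta_i$ is positive semidefinite, and the appeal to Horn–Johnson — is routine given the tools already assembled in this section, so I do not expect a serious obstacle.
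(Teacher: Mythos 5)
Your proposal is correct and follows essentially the same route as the paper: split the hyperedge operators $\alpha_e$ into two color classes, use disjointness within a class to get two commuting (hence positive semidefinite) factors via Lemma~\ref{lem:positive-semidefinite-edge-operator}, apply \cite[Corollary 7.6.2]{horn-johnson} to the product of the two factors, and conclude via Lemma~\ref{lem:hyperforest-operator-ch} together with Corollary~\ref{cor:eigenvalues} (the paper invokes Lemma~\ref{lem:schur-positive} directly, which is the same mechanism). The only cosmetic caveat is that the similarity to $\beta_1^{1/2}\beta_2\beta_1^{1/2}$ requires a limiting argument when $\beta_1$ is singular, but the Horn--Johnson citation already covers this.
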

\begin{proof} Choose a proper $2$-coloring of the edges of $G$ by colors ``even'' and ``odd''. Define
\begin{equation*}
\alpha_{\text{odd}} = \prod_{\substack{e \in E(G) \\ \text{$e$ odd}}} \alpha_{e} \qquad \text{and} \qquad \alpha_{\text{even}} = \prod_{\substack{e \in E(G) \\ \text{$e$ even}}} \alpha_e.
\end{equation*}
By Lemma~\ref{lem:hyperforest-operator-ch}, $\ch(\alpha_{\text{odd}}\alpha_{\text{even}}) = X_G$. The factors of $\alpha_{\text{odd}}$ commute because the odd edges $e$ are pairwise disjoint. Since each factor of $\alpha_{\text{odd}}$ is positive semidefinite by Lemma~\ref{lem:positive-semidefinite-edge-operator}, so is $\alpha_{\text{odd}}$ itself---as of course is $\alpha_{\text{even}}$. But the product of two positive semidefinite operators has nonnegative eigenvalues \cite[Corollary 7.6.2]{horn-johnson}, so $X_G$ is Schur positive by Lemma~\ref{lem:schur-positive}.
\end{proof}

Suppose $f(x)$ is a formal power series over a field with $f(0) = 0$ and $f'(0) \neq 0$, so that the compositional inverse $f^{-1}(x)$ exists as a formal power series. The formal power series $f(f^{-1}(x_1) + f^{-1}(x_2) + \cdots)$ is called the \emph{formal group law} associated to $f(x)$. Evidently $f(f^{-1}(x_1) + f^{-1}(x_2) + \cdots)$ is symmetric in $x_1, x_2, \ldots$, although it may lie in the completion of $\Lambda$ rather than $\Lambda$ itself.

Taylor \cite{jair-thesis} showed that if $f(x)$ is the exponential or ordinary generating function of an appropriate family of combinatorial objects---more precisely, of a species equipped with an operation satisfying certain axioms---then $f(f^{-1}(x_1) + f^{-1}(x_2) + \cdots)$ is an explicit positive linear combination of chromatic symmetric functions $X_H$ for some hypergraphs $H$. These hypergraphs often turn out to be linear interval hypergraphs, so that the associated formal group law is Schur positive by Theorem~\ref{thm:edge-bipartite-hypergraphs}. The next theorem records some such cases.

\begin{thm} \label{thm:formal-group-laws} \upshape If $a_n$ counts any of the following, the formal group law associated to the ordinary generating function $\sum_{n=1}^{\infty} a_n x^n$ is Schur positive:

\begin{enumerate}[(a)]
\item Permutations of $[n]$.
\item Plane trees where no node has exactly one child, with leaves labeled by $[n]$ from left to right.
\item \emph{$L$-admissible lattice paths} with $n$ steps: here $L \subseteq \ZZ$ is a finite set, and an $L$-admissible lattice path is a sequence $s_1, \ldots, s_n$ where $s_1 = s_n = 0$ and $s_{i+1} - s_i \in L$ for each $i$.
\end{enumerate}

Likewise, if $b_n$ counts any of the following, the formal group law associated to the exponential generating function $\sum_{n=1} \frac{b_n}{n!} x^n$ is Schur positive:
\begin{enumerate}[(a$'$)]
\item Permutations of $[n]$.
\item Rooted trees in which no node has exactly one child, with leaves labeled by $[n]$.
\item Posets with a minimum and a maximum whose elements are labeled by $[n]$.
\end{enumerate}

The same Schur positivity statement also holds for the \emph{Hadamard product} of any two of these exponential generating functions, the Hadamard product of $\sum_{n=0}^{\infty} a_n \frac{x^n}{n!}$ and $\sum_{n=0}^{\infty} b_n \frac{x^n}{n!}$ being $\sum_{n=0}^{\infty} a_n b_n \frac{x^n}{n!}$.
\end{thm}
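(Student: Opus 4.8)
The plan is to combine Taylor's reduction of these formal group laws to chromatic symmetric functions with Theorem~\ref{thm:edge-bipartite-hypergraphs}. Taylor shows that when $f(x)$ is the ordinary or exponential generating function of a species $\mathcal{C}$ carrying an operation satisfying his axioms, the formal group law $f(f^{-1}(x_1)+f^{-1}(x_2)+\cdots)$ equals $\sum_H c_H\,X_H$ for an explicit family of hypergraphs $H$ assembled from overlapping $\mathcal{C}$-structures, with all $c_H \geq 0$. Since a nonnegative linear combination of Schur positive symmetric functions is Schur positive, it suffices to (i) check that each family (a)--(c) and (a$'$)--(c$'$) fits Taylor's framework, (ii) read off the hypergraphs $H$ that arise, and (iii) verify that each such $H$ is an edge 2-colorable hyperforest, so that Theorem~\ref{thm:edge-bipartite-hypergraphs} applies.

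For the families whose structures admit a \emph{sequential} decomposition --- permutations (cases (a), (a$'$)), $L$-admissible lattice paths (case (c), which concatenate at the heights $s_i=0$), and bounded posets (case (c$'$), which compose by series decomposition, gluing $\hat{1}$ of one block to $\hat{0}$ of the next) --- the structure breaks uniquely into a chain of prime pieces glued at single cut points, so the hypergraph $H$ has hyperedges $e_1,\dots,e_k$, one per prime piece, with $|e_i \cap e_{i+1}| = 1$ and $e_i \cap e_j = \emptyset$ for $|i-j|>1$. Thus $L(H)$ is a path and $H$ is a linear interval hypergraph, in particular an edge 2-colorable hyperforest (color $e_i$ by the parity of $i$). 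For the tree-shaped families --- plane trees, resp.\ rooted trees, with no unary node and leaves labeled by $[n]$ (cases (b), (b$'$)) --- the hyperedges of $H$ correspond to internal nodes, the hyperedge for a node $v$ consisting of $v$ together with its children. Consecutive hyperedges then share exactly the vertex joining parent to child, nonadjacent ones are disjoint, so $H$ is a hypertree, and coloring each hyperedge by the parity of the depth of its node yields a proper 2-coloring of $L(H)$ (hyperedges at depths of the same parity have disjoint vertex sets). Hence every $H$ arising in (a)--(c) and (a$'$)--(c$'$) is an edge 2-colorable hyperforest and the corresponding formal group law is Schur positive.

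It remains to handle the Hadamard product of two exponential generating functions from the list. The Hadamard product of species $\mathcal{C}$ and $\mathcal{D}$ has, on a set $S$, the pairs $(c,d)$ of a $\mathcal{C}$-structure and a $\mathcal{D}$-structure on $S$, and it inherits a Taylor operation from the two factors, so Taylor's theorem again writes its formal group law as $\sum_H c_H X_H$ with $c_H\geq 0$. Now the hyperedges of $H$ split into those coming from the prime pieces of the $\mathcal{C}$-factor and those from the $\mathcal{D}$-factor; hyperedges within a single factor are pairwise disjoint (they are the blocks of a sequential decomposition), so every intersecting pair consists of one $\mathcal{C}$-edge and one $\mathcal{D}$-edge, i.e.\ this partition is a proper 2-coloring of $L(H)$. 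Showing that such an $H$ is genuinely a hyperforest --- not merely that $L(H)$ is 2-colorable --- is the crucial point, and it is exactly here that the full strength of Theorem~\ref{thm:edge-bipartite-hypergraphs}, beyond the linear interval hypergraphs of Taylor's Conjecture~B, is used, since superposition hypergraphs of this shape are typically not linear interval (the hypergraph with edges $\{1,2,3\},\{1,1'\},\{2,2'\},\{3,3'\}$ is a representative example).

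The main obstacle I anticipate is not representation-theoretic --- all of that is absorbed into Theorem~\ref{thm:edge-bipartite-hypergraphs} --- but the combinatorial bookkeeping in steps (ii) and (iii): extracting from Taylor's construction the precise incidence structure of the hypergraphs $H$, confirming in each of (a)--(c$'$) that consecutive blocks overlap in a single vertex, and verifying that the Hadamard-product hypergraphs are acyclic. Once these incidence facts are in hand, the 2-colorability and the appeal to Theorem~\ref{thm:edge-bipartite-hypergraphs} are immediate.
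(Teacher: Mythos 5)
Your overall route is the paper's: invoke Taylor's expression of each formal group law as a nonnegative combination of chromatic symmetric functions of hypergraphs, then check that those hypergraphs are edge 2-colorable hyperforests so that Theorem~\ref{thm:edge-bipartite-hypergraphs} applies. For (a)--(c) this is exactly what the paper does (Taylor already notes those hypergraphs are linear interval), and for (a$'$)--(c$'$) the paper's only substantive work is case (c$'$): one must prove that the minimal intervals $I(P)$ of a bounded poset pairwise intersect in at most one element and that $L(H)$ is a disjoint union of paths, which is precisely the verification you defer to ``bookkeeping.'' Also, for (b$'$) Taylor's construction produces hypergraphs with pairwise \emph{disjoint} edges, not the node-plus-children hypertrees you describe; your guessed structure would still have been edge 2-colorable, but the incidence data you posit is not what the cited construction gives.

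The genuine gap is the Hadamard product case. In Taylor's construction the hypergraph attached to a Hadamard product has edge set $E(H_1) \cap E(H_2)$ --- the \emph{common} edges of the two factor hypergraphs --- not the union of the two edge sets, as you assume. With the intersection, $H$ has a subset of the edges of a linear interval hypergraph and is therefore itself a linear interval hypergraph, and the case is immediate from the earlier ones. With your union picture, the claim you yourself flag as ``the crucial point,'' that the superposition is a hyperforest, is not only unproven but false in general: superimposing two sequential decompositions of the same labeled set typically creates hypergraph cycles (for instance edges $\{1,2\},\{2,3\}$ from one factor together with $\{1,3\}$ or $\{1,2,3\}$ from the other). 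Relatedly, your assertion that this case requires the full strength of Theorem~\ref{thm:edge-bipartite-hypergraphs} beyond Taylor's Conjecture~B is not borne out by the paper: every hypergraph arising in this theorem, including those for Hadamard products, is a linear interval hypergraph.
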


\begin{proof}
    If $f$ is the ordinary generating function of what is called a \emph{contractible $\mathbb{L}$-species} in \cite{jair-thesis}, then \cite[Theorem 6.1]{jair-thesis} shows how to write $f^{-1}(f(x_1) + f(x_2) + \cdots)$ in the form $\sum_{H} X_{H}$, where $H$ runs over a certain set of hypergraphs. The ordinary generating functions (a), (b), and (c) all correspond to contractible $\mathbb{L}$-species, and as noted in \cite[\S 9.2]{jair-thesis}, all of the associated hypergraphs $H$ are linear interval hypergraphs, so the theorem in these cases follows from Theorem~\ref{thm:edge-bipartite-hypergraphs}.

    If $f$ is the exponential generating function of a \emph{contractible species}, then there is again an expression $f^{-1}(f(x_1) + f(x_2) + \cdots) = \sum_{H} \frac{1}{|V(H)|!} X_H$. The $H$ appearing in the sum are not always edge 2-colorable hypertrees, but in the cases (a$'$), (b$'$), and (c$'$) they are linear interval hypergraphs. Since this fact is not explicitly mentioned in \cite{jair-thesis}, we verify it here, although nothing in the rest of the paper relies on the present theorem. We use the descriptions of the relevant hypergraphs $H$ from, respectively, \S 3.4, \S 3.2, and \S 3.5 of \cite{jair-thesis}.

    \begin{enumerate}[(a$'$)]
        \item $H$ runs over the set of path graphs \cite[\S 3.4]{jair-thesis}.
        \item $H$ is always a hypergraph with disjoint edges \cite[\S 3.2]{jair-thesis}.
        \item Let $P$ be a finite poset with minimum and maximum elements. Define $I'(P)$ to be the set of non-singleton intervals $U = [a,b] \subseteq P$ such that if $p \in P \setminus U$ and $u \in U$, then $p \leq u$ if and only if $p \leq a$, and $p \geq u$ if and only if $p \geq b$. Let $I(P)$ be the set of elements of $I'(P)$ which are minimal under containment. Then $H$ can be taken to run over a set of hypergraphs whose edge sets have the form $I(P)$ \cite[\S 3.5]{jair-thesis}.

        Suppose $H$ is such a hypergraph. Let us first see that $|e \cap e'| \leq 1$ for distinct $e, e' \in E(H)$. Suppose $[a,b], [c,d] \in I(P)$ are distinct and that $x \in [a,b] \cap [c,d]$. By minimality we cannot have $[a,b] \subseteq [c,d]$, so either $a \notin [c,d]$ or $b \notin [c,d]$. Similarly, either $c \notin [a,b]$ or $d \notin [a,b]$. It suffices to consider the following two cases.
        \begin{itemize}
            \item Assume $a \notin [c,d]$ and $c \notin [a,b]$. Then $a \leq x \in [c,d]$ implies $a \leq c$, and $c \leq x \in [a,b]$ implies $c \leq a$. Thus $a = c$, but this contradicts $a \notin [c,d]$.

            \item Assume $a \notin [c,d]$ and $d \notin [a,b]$. Then $a \leq c$ as before, while now $d \geq x \in [a,b]$ implies $d \geq b$. If $b \notin [c,d]$, then $b \geq x \in [c,d]$ implies $b \geq d$, a contradiction. Thus $a \leq c \leq b \leq d$, so $[a,b] \cap [c,d] = [c,b]$. One checks that $[c,b]$ has the property required for membership in $I'(P)$ so long as it is not a singleton, but this would contradict minimality of $[a,b]$. Therefore $c = b$, and $[a,b] \cap [c,d] = \{b\}$.
        \end{itemize}

        A path in $L(H)$ is a sequence of edges $[a_1, b_1], \ldots, [a_k, b_k]$ of $H$ such that $[a_i,b_i] \cap [a_{i-1}, b_{i-1}] \neq \emptyset$ for each $i$. The previous paragraph shows that $a_i = b_{i-1}$ for each $i$, so $a_1 \leq \cdots \leq a_k$. The acyclicity of $\leq$ therefore implies that $L(H)$ is a forest. If $[a,b], [b,c], [c,d]$ and $[a,b], [b,c], [c,e]$ are two paths of length 3 in $L(H)$, then $[c,d] \cap [c,e] \neq \emptyset$ implies $d = e$, so $L(H)$ has no vertices of degree more than two. Thus, $H$ is a linear interval hypergraph.
    \end{enumerate}

    Finally, suppose $f(x) = \sum_{n=0}^{\infty} a_n b_n \frac{x^n}{n!}$ is the Hadamard product of two of the exponential generating functions described above. As described in \cite[\S 5]{jair-thesis}, there is an expression $f^{-1}(f(x_1) + f(x_2) + \cdots) = \sum_H \frac{1}{|V(H)|!} X_H$ where $H$ runs over hypergraphs with edge sets of the form $E(H_1) \cap E(H_2)$, with $H_1$ and $H_2$ being two hypergraphs of the types described in (a$'$), (b$'$), or (c$'$). We saw that such $H_1$ and $H_2$ are linear interval hypergraphs, which implies $H$ is.

\end{proof}

\section{Pointed chromatic symmetric functions}
\label{sec:pointed-chrom-sym}

A \emph{pointed graph} is a pair $(G,v)$ with $v \in V(G)$. Write $\type_{v}^-(G)$ for the partition whose parts are the sizes of the connected components of $G$ not containing $v$, and $\type_{v}^+(G)$ for the size of the component containing $v$. Recall that if $S \subseteq E(G)$, then $G_S$ denotes the graph with vertices $V(G)$ and edges $S$.
\begin{defn} \label{defn:pointed-chrom-sym} The \emph{pointed chromatic symmetric function} of a pointed graph $(G,v)$ is
\begin{equation*}
X_{G,v} = \sum_{S \subseteq E(G)} (-1)^{|S|} p_{\type_v^-(G_S)} t^{\type^+_v(G_S)-1}  \in \Lambda[t].
\end{equation*}
\end{defn}

\begin{ex}
If $G = P_2 = \raisebox{-1mm}{\begin{tikzpicture}[scale=0.2, every node/.style={circle, black, fill, inner sep=0pt, minimum size=2.5pt}]  \node {} child { node {} }; \end{tikzpicture}}$\,, then
    \begin{alignat*}{2}
        S = \emptyset& \leadsto \type^{-}_1(G_S) = (1), \quad &\type^{+}_1(G_S) = 1\\
        S = \{(1,2)\}& \leadsto \type^{-}_1(G_S) = \emptyset, \quad &\type^{+}_1(G_2) = 2
    \end{alignat*}
    so $X_{P_2, 1} = p_1 - t$.
    \begin{center}
    
    \end{center}
\end{ex}

\begin{ex}
    If $G$ is a disjoint union $G_1 \sqcup G_2$ and $v \in G_1$, then $X_{G,v} = X_{G_1,v}X_{G_2}$. In particular, if $v$ is an isolated vertex of $G$ then $X_{G,v} = X_G$.
\end{ex}

The \emph{wedge sum} $(G,v) \vee (H,w)$ is the pointed graph obtained from $G \sqcup H$ by identifying $v$ and $w$, with $v = w$ as the distinguished vertex. The next proposition is analogous to the fact that $X_{G \sqcup H} = X_G X_H$.
\begin{prop} $X_{G \vee H, v} = X_{G, v} X_{H, w}$ for any pointed graphs $(G,v)$ and $(H,w)$. \end{prop}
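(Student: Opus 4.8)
The plan is to prove the identity directly from the power-sum definition of $X_{G,v}$ (Definition~\ref{defn:pointed-chrom-sym}), by matching up the monomials on both sides. Write $K = G \vee H$ for the wedge sum, with $v = w$ as the distinguished vertex, so that $E(K) = E(G) \sqcup E(H)$ (the edge sets are disjoint, since $G$ and $H$ shared only the vertex $v$). A subset $S \subseteq E(K)$ thus decomposes uniquely as $S = S_G \sqcup S_H$ with $S_G \subseteq E(G)$ and $S_H \subseteq E(H)$, and $(-1)^{|S|} = (-1)^{|S_G|}(-1)^{|S_H|}$. So the only thing to check is that the ``type data'' factors correctly, namely that
\begin{equation*}
p_{\type_v^-(K_S)} \, t^{\type_v^+(K_S)-1} = \left(p_{\type_v^-(G_{S_G})} \, t^{\type_v^+(G_{S_G})-1}\right)\left(p_{\type_v^-(H_{S_H})} \, t^{\type_v^+(H_{S_H})-1}\right),
\end{equation*}
after which summing over all $S = S_G \sqcup S_H$ and using the product-of-sums expansion gives $X_{K,v} = X_{G,v} X_{H,v}$.

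The key combinatorial observation is that, since $G$ and $H$ meet only at the cut vertex $v$, the connected components of $K_S$ are exactly: the components of $G_{S_G}$ not containing $v$; the components of $H_{S_H}$ not containing $v$; and one further component obtained by gluing the $v$-component of $G_{S_G}$ to the $v$-component of $H_{S_H}$ along $v$. This is where I would spend a sentence or two being careful: a path in $K_S$ using edges from both $S_G$ and $S_H$ must pass through $v$, so no component other than the glued one can straddle the two sides. It follows that $\type_v^-(K_S) = \type_v^-(G_{S_G}) \cup \type_v^-(H_{S_H})$ as a multiset partition, whence $p_{\type_v^-(K_S)} = p_{\type_v^-(G_{S_G})} p_{\type_v^-(H_{S_H})}$ because the power-sum symmetric function of a union of partitions is the product. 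Moreover the glued component has vertex count $\type_v^+(G_{S_G}) + \type_v^+(H_{S_H}) - 1$ (the two $v$-components share exactly the one vertex $v$), so $\type_v^+(K_S) - 1 = (\type_v^+(G_{S_G}) - 1) + (\type_v^+(H_{S_H}) - 1)$, giving the claimed factorization of the $t$-power. Combining the two gives the displayed equation.

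I do not expect a serious obstacle here; the proposition is essentially bookkeeping. The one point that needs genuine (if brief) care is the structural claim about components of $K_S$ — specifically ruling out a component of $K_S$ that contains edges of both sides yet avoids $v$ — which follows immediately from the fact that $V(G) \cap V(H) = \{v\}$ in the wedge sum. A secondary, purely notational subtlety is that $\type_v^+$ is defined to always pick out a component containing $v$ even when $v$ is isolated in $K_S$ (in which case $\type_v^+(K_S) = 1$ and the glued component is just $\{v\}$); one should check the formulas degenerate correctly in that case, which they do since $\type_v^+(G_{S_G}) = \type_v^+(H_{S_H}) = 1$ forces $\type_v^+(K_S) = 1$. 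With those remarks in place the proof is a couple of lines.
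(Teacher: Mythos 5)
Your proof is correct and follows essentially the same route as the paper: decompose $S \subseteq E(G \vee H)$ as $S_G \sqcup S_H$, note that $\type_v^+$ adds (minus one) and $\type_v^-$ takes the union of partitions, and multiply. The paper states this more tersely via $(G \vee H)_{S \cup S'} = G_S \vee H_{S'}$, but the content is identical.
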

    \begin{proof} For subsets $S \subseteq E(G)$ and $S' \subseteq E(H)$, we have $(G \vee H)_{S \cup S'} = G_S \vee H_{S'}$, which implies
        \begin{align*}
            &\type_v^+((G \vee H)_{S \cup S'}) = \type_v^+(G_S) + \type_v^+(H_{S'}) - 1\\
            &\type_v^-((G \vee H)_{S \cup S'}) = \type_v^-(G_S) \cup \type_v^-(H_{S'}),
        \end{align*}
        where $\lambda \cup \mu$ is the partition whose parts are the parts of $\lambda$ together with the parts of $\mu$.
    \end{proof}

Pointed chromatic symmetric functions satisfy a deletion-contraction recurrence. Given $e \in E(G)$, let $G \setminus e$ be the graph obtained from $G$ by deleting the edge $e$, and $G/e$ the graph obtained by contracting it.
\begin{lem} \label{lem:del-con} If $e$ is incident to $v \in V(G)$, then $X_{G,v} = X_{G\setminus e, v} - tX_{G/e, v}$. \end{lem}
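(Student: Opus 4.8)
The plan is to split the defining sum for $X_{G,v}$ according to whether the subset $S \subseteq E(G)$ contains the edge $e$. Write the sum as $\sum_{S \not\ni e}(\cdots) + \sum_{S \ni e}(\cdots)$. For the first part, the subsets $S$ with $e \notin S$ are precisely the subsets of $E(G \setminus e)$, and for such $S$ the graphs $G_S$ and $(G \setminus e)_S$ coincide (same vertex set $V(G)$, same edge set $S$). Hence $\type_v^{\pm}(G_S) = \type_v^{\pm}((G\setminus e)_S)$ termwise, and the first part of the sum is literally $X_{G \setminus e, v}$.

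For the second part, I would reindex using the bijection $S' \mapsto S' \cup \{e\}$ between subsets $S' \subseteq E(G)\setminus\{e\} = E(G/e)$ and subsets $S \subseteq E(G)$ containing $e$. The geometric input is the identity $(G/e)_{S'} = (G_{S'\cup\{e\}})/e$: both sides are graphs on the vertex set $V(G/e)$ with edge set $S'$, with matching incidences. Then I would invoke the elementary fact that contracting an edge $e$ incident to $v$ in any graph $H$ merges $v$ with the other endpoint $u$ of $e$; since $u$ already lies in the same connected component as $v$ (namely the one containing $e$), that component loses exactly one vertex and every other component is unchanged. Applied to $H = G_S$ with $S = S' \cup \{e\}$, this yields
\begin{equation*}
\type_v^+((G/e)_{S'}) = \type_v^+(G_S) - 1, \qquad \type_v^-((G/e)_{S'}) = \type_v^-(G_S).
\end{equation*}

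Substituting these identities, together with $(-1)^{|S|} = -(-1)^{|S'|}$ and $t^{\type_v^+(G_S)-1} = t \cdot t^{\type_v^+((G/e)_{S'})-1}$, collapses the $e \in S$ part of the sum to $-t\,X_{G/e,v}$. Adding the two parts gives $X_{G,v} = X_{G\setminus e,v} - t\,X_{G/e,v}$, as claimed.

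I do not expect any substantive obstacle: the whole argument is a direct manipulation of Definition~\ref{defn:pointed-chrom-sym}. The only point requiring a little care is that contracting $e$ in a simple graph may produce parallel edges, so $G/e$ (and each $(G/e)_{S'}$) must be read as a multigraph with $E(G/e)$ in the natural bijection with $E(G)\setminus\{e\}$; but since $\type_v^{\pm}$ depends only on the partition of the vertex set into connected components, parallel edges are irrelevant and the argument goes through unchanged.
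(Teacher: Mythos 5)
Your proof is correct and is essentially the paper's own argument: split the defining sum according to whether $e \in S$, identify the $e \notin S$ part with $X_{G\setminus e,v}$, and reindex the $e \in S$ part via $S \mapsto S\setminus\{e\}$, using that contracting an edge at $v$ shrinks the component of $v$ by one vertex and leaves all other components unchanged. The one place where you are quicker than the paper is the parallel-edge issue: your argument literally proves the recurrence with $G/e$ read as a multigraph whose edges are in bijection with $E(G)\setminus\{e\}$, and to pass to the underlying simple graph one needs not just that $\type_v^{\pm}$ ignores multiplicities but also the inclusion--exclusion cancellation $\sum_{j\geq 1}(-1)^j\binom{k}{j}=-1$ over each class of $k$ parallel edges (this is exactly the ``one can check by inclusion-exclusion'' remark in the paper, and it matters in later applications such as $K(n,k)/e = K_{n-1}$, where contraction does create parallel edges).
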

    \begin{proof} In Definition~\ref{defn:pointed-chrom-sym}, sum separately over those sets $S \subseteq E(G)$ in which $e$ does not appear and those in which it does. These two sums are, respectively, $X_{G\setminus e, v}$ and $-tX_{G/e, v}$. To be precise, the second sum is $-tX_{G/e, v}$ where $G/e$ is considered as a \emph{multigraph}---for instance, if $e = (v,w)$ and $G$ contains a triangle $(v,w),(u,v),(u,w) \in E(G)$, then the edge $(u,v)$ appears twice in $G/e$. However, one can check by inclusion-exclusion that if $H$ is a multigraph and $H'$ is its underlying graph, then $X_{H,v} = X_{H',v}$.
    \end{proof}
\begin{cor} \label{cor:monomial-positive} After applying the $\Lambda$-linear map $\Lambda[t] \to \Lambda[t]$ sending $t \mapsto -t$, $X_{G,v}$ becomes nonnegative in the monomial basis $\{m_{\lambda} t^i\}$ of $\Lambda[t]$. \end{cor}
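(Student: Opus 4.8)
The plan is to prove the statement by induction on the number of edges of $G$, with the deletion--contraction recurrence of Lemma~\ref{lem:del-con} as the engine. Write $\phi\colon \Lambda[t]\to\Lambda[t]$ for the $\Lambda$-linear map in question, so that $\phi(t)=-t$ and $\phi$ is a ring homomorphism; the goal is to show $\phi(X_{G,v})$ has nonnegative coefficients in the basis $\{m_\lambda t^i\}$. The one observation that makes everything work is that $\phi$ converts the minus sign in Lemma~\ref{lem:del-con} into a plus sign, so the two summands produced by the recurrence contribute without any cancellation.

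First I would dispose of the case in which no edge of $G$ is incident to $v$, i.e.\ $v$ is an isolated vertex of $G$ (this includes the base case $E(G)=\emptyset$, and is also where the recursion below can terminate). In that situation $\{v\}$ is a component of $G_S$ for every $S\subseteq E(G)=E(G\setminus v)$, so $\type_v^+(G_S)=1$ and $\type_v^-(G_S)=\type\big((G\setminus v)_S\big)$. Substituting into Definition~\ref{defn:pointed-chrom-sym} and comparing with Theorem~\ref{thm:power-sum-expansion} applied to $G\setminus v$ gives $X_{G,v}=X_{G\setminus v}$, which lies in $\Lambda\subseteq\Lambda[t]$ (it has $t$-degree $0$) and is monomial-positive because it is a chromatic symmetric function. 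Since $\phi$ fixes everything of $t$-degree $0$, the claim holds here regardless of how many edges $G$ has.

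For the inductive step, suppose some edge $e$ is incident to $v$. Lemma~\ref{lem:del-con} gives $X_{G,v}=X_{G\setminus e,v}-t\,X_{G/e,v}$, so applying $\phi$ yields $\phi(X_{G,v})=\phi(X_{G\setminus e,v})+t\cdot\phi(X_{G/e,v})$. Now $G\setminus e$ has one fewer edge than $G$; and because $G$ is simple the multigraph $G/e$ has no loops and exactly $|E(G)|-1$ edges, so its underlying simple graph has strictly fewer edges than $G$, and $X_{G/e,v}$ is unchanged upon replacing $G/e$ by that simple graph (by the remark in the proof of Lemma~\ref{lem:del-con}). Hence the induction hypothesis applies to both $(G\setminus e,v)$ and $(G/e,v)$, so $\phi(X_{G\setminus e,v})$ and $\phi(X_{G/e,v})$ have nonnegative coefficients in $\{m_\lambda t^i\}$. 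Since nonnegativity of $\{m_\lambda t^i\}$-coefficients is preserved under addition and under multiplication by $t$ (which only shifts the $t$-degree), it follows that $\phi(X_{G,v})$ has nonnegative coefficients in $\{m_\lambda t^i\}$, closing the induction.

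I do not anticipate a genuine obstacle: the argument is a routine structural induction once one notices that $\phi$ makes the deletion--contraction recurrence free of cancellation. The places that want a little care are the bookkeeping ones --- checking that contraction really lowers the edge count (which is exactly where simplicity of $G$ is used, to forbid loops), checking that passing to the underlying simple graph does not change $X_{G/e,v}$, and noticing that the isolated-vertex case must be handled outside the recurrence. One could also avoid induction altogether: expanding each $p_{\type_v^-(G_S)}$ over colorings and regrouping the sum in Definition~\ref{defn:pointed-chrom-sym} according to the vertex set $W$ of the component of $v$, one obtains $\phi(X_{G,v})=\sum_{W\ni v} c_W\, t^{\,|W|-1}\, X_{G[V\setminus W]}$, where $c_W$ is $(-1)^{|W|-1}$ times the coefficient of the linear term in the chromatic polynomial of $G[W]$; this $c_W$ is $\geq 0$, since that coefficient vanishes unless $G[W]$ is connected and otherwise has sign $(-1)^{|W|-1}$, while each $X_{G[V\setminus W]}$ is monomial-positive. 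This route uses a classical fact about chromatic polynomials not recorded in the paper, however, so the inductive proof --- which stays entirely within the paper's toolkit --- seems preferable.
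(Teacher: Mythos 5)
Your proof is correct and is essentially the paper's own argument: induction via the deletion--contraction recurrence of Lemma~\ref{lem:del-con}, where the substitution $t\mapsto -t$ turns the recurrence into a cancellation-free sum, with base case an isolated distinguished vertex where the pointed chromatic symmetric function reduces to an ordinary (monomial-positive) chromatic symmetric function. You simply spell out the bookkeeping (edge count dropping under contraction, the multigraph/underlying-simple-graph point) that the paper's two-line proof leaves implicit.
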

    \begin{proof} Induct using the deletion-contraction recurrence. The base case is when $v$ is an isolated vertex, in which case $X_{G,v} = X_G \in \Lambda$ is monomial-positive.
    \end{proof}
Given Corollary~\ref{cor:monomial-positive}, it would be interesting to find a combinatorial description of the monomial expansion of $X_{G,v}$.

Our goal in the rest of this section is to prove that if $X_{G,v}$ and $X_{H,w}$ are positive in the \emph{pointed Schur basis}, then $X_{G \wedge H, v}$ is Schur positive. Defining this basis and proving the necessary lemmas requires a small detour into representation theory. For $v \in [n]$, let $Z_{n,v}$ be the centralizer of $\CC[S_{n-1}]$ in $\CC[S_n]$, where we identify $S_{n-1}$ with the subgroup of $S_n$ fixing $v$.

\begin{lem}[\cite{vershik-okounkov}, Theorem 2.1] \label{lem:nice-centralizer} Every $x \in \RR[S_n] \cap Z_{n,v}$ is Hermitian, and $Z_{n,v}$ is a semisimple commutative algebra. \end{lem}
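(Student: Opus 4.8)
The plan is to pin down the algebra $Z_{n,v}$ completely using Wedderburn theory together with the branching rule for $S_n\downarrow S_{n-1}$, after which both assertions drop out; the Hermitian statement then comes from exhibiting an explicit real basis of idempotents.

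First I would use the Artin--Wedderburn decomposition $\CC[S_n]=\bigoplus_{\lambda\vdash n}\operatorname{End}(V^\lambda)$, so that an element lies in $Z_{n,v}$ exactly when its image in each $\operatorname{End}(V^\lambda)$ commutes with the image of $\CC[S_{n-1}]$ there. The essential input is that the restriction $V^\lambda|_{S_{n-1}}=\bigoplus_{\mu\lessdot\lambda}V^\mu$ is \emph{multiplicity-free}, the sum running over partitions $\mu$ obtained from $\lambda$ by removing one box. Consequently the image of $\CC[S_{n-1}]$ in $\operatorname{End}(V^\lambda)$ is the block-diagonal subalgebra $\bigoplus_{\mu\lessdot\lambda}\operatorname{End}(V^\mu)$, and a short Schur's-lemma argument (any operator commuting with all block-diagonal operators is block-diagonal with scalar blocks, using here that the $\mu$'s are distinct) shows its centralizer in $\operatorname{End}(V^\lambda)$ is $\bigoplus_{\mu\lessdot\lambda}\CC\cdot\id_{V^\mu}$. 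Summing over $\lambda$ gives $Z_{n,v}\cong\bigoplus_{\lambda}\bigoplus_{\mu\lessdot\lambda}\CC$, which is visibly commutative and semisimple.

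For the Hermitian claim I would make the idempotent basis explicit. Let $\epsilon_\lambda\in\CC[S_n]$ and $\epsilon^{S_{n-1}}_\mu\in\CC[S_{n-1}]\subseteq\CC[S_n]$ be the central primitive idempotents, and put $e_{\lambda,\mu}=\epsilon_\lambda\,\epsilon^{S_{n-1}}_\mu$ for $\mu\lessdot\lambda$; these are precisely the identities of the summands $\CC\cdot\id_{V^\mu}$ above, hence form a $\CC$-basis of $Z_{n,v}$. Each $e_{\lambda,\mu}$ is a product of two commuting Hermitian elements: the formula $\epsilon_\chi=\tfrac{\chi(1)}{|\Gamma|}\sum_g\overline{\chi(g)}g$ together with $\sigma^*=\sigma^{-1}$ (Proposition~\ref{prop:hermitian}) gives $\epsilon_\chi^*=\epsilon_\chi$ for every irreducible $\chi$, and $\epsilon_\lambda$ is central so commutes with $\epsilon^{S_{n-1}}_\mu$; a product of commuting Hermitian elements is Hermitian. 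Moreover $e_{\lambda,\mu}\in\RR[S_n]$ because symmetric group characters are integer-valued. Finally, $Z_{n,v}$ is the solution set of the $\RR$-linear (indeed $\ZZ$-linear) system $\{xg=gx:g\in S_{n-1}\}$ on the coefficients of $x$, so it has a real basis; concretely, for $z\in\RR[S_n]\cap Z_{n,v}$ write $z=\sum c_{\lambda,\mu}e_{\lambda,\mu}$ with $c_{\lambda,\mu}\in\CC$, and comparing with $\bar{z}=\sum\bar{c}_{\lambda,\mu}e_{\lambda,\mu}$ (valid since the $e_{\lambda,\mu}$ are real and $\CC$-linearly independent) forces $c_{\lambda,\mu}\in\RR$. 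Thus $z$ is a real combination of Hermitian idempotents, hence Hermitian.

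The one genuinely essential ingredient is the multiplicity-freeness of $S_n\downarrow S_{n-1}$; everything else is bookkeeping. The step most prone to slips is the last one — making sure that ``real and in $Z_{n,v}$'' really does force real coordinates against the chosen basis — but this is just the standard fact that a complex subspace cut out over $\RR$ has a real basis. Alternatively, one could package the Hermitian conclusion by observing (provable the same way) that $Z_{n,v}$ is generated as an algebra by the two commuting $*$-stable subalgebras $Z(\CC[S_n])$ and $Z(\CC[S_{n-1}])$, both consisting of real Hermitian elements.
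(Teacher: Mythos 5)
Your proof is correct, but it is worth noting that the paper itself gives no argument for this lemma --- it simply cites Vershik--Okounkov, so the comparison is really with their proof, which runs quite differently. Their argument is the elementary ``Gelfand pair'' one: every $\sigma \in S_n$ is conjugate to $\sigma^{-1}$ by a permutation fixing $v$ (same cycle type, with the cycle through $v$ of the same length), so the linear anti-automorphism $\sigma \mapsto \sigma^{-1}$ fixes each $S_{n-1}$-orbit sum and hence fixes $Z_{n,v}$ pointwise; this simultaneously yields commutativity ($xy = (xy)^T = y^T x^T = yx$) and, for $x \in \RR[S_n] \cap Z_{n,v}$, the identity $x^* = \bar{x}^T = x$, with semisimplicity following because $Z_{n,v}$ is closed under the adjoint. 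You instead take the multiplicity-freeness of the branching rule $S_n \downarrow S_{n-1}$ as input and compute $Z_{n,v} \cong \bigoplus_{\lambda} \bigoplus_{\mu} \CC$ via Wedderburn and the double centralizer, then verify Hermitianness on the explicit idempotent basis $\epsilon_{\lambda}\epsilon_{\mu}$; all of these steps are sound, including the final descent showing that a real element has real coordinates in that basis. What your route buys is the explicit structure of $Z_{n,v}$ and its primitive idempotents $\epsilon_{\lambda,i}$, which the paper uses anyway; what the Vershik--Okounkov route buys is independence from the branching rule --- in their development the multiplicity-freeness of branching is \emph{deduced} from this lemma, so your argument inverts their logical order. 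That is not a gap here, since the branching rule is classical and provable independently (e.g.\ via the Pieri rule), but if one wanted a proof in the spirit of the cited source, the conjugacy-by-$S_{n-1}$ involution argument is the shorter and more self-contained one.
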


Being semisimple, Wedderburn's theorem says that the algebra $Z_{n,v}$ has a unique basis of orthogonal idempotents which add to $1$. These idempotents can be described as follows. Given a part $i$ of $\lambda \vdash n$, let $\lambda^{\downarrow i}$ be $\lambda$ with one copy of $i$ replaced by $i-1$ (or deleted if $i = 1$). Let $\epsilon_{\lambda} = \frac{\chi^{\lambda}(1)}{n!}\sum_{\sigma \in S_n} \chi^{\lambda}(\sigma)\sigma \in S_n$ be the idempotent in $Z(\CC[S_n])$ associated to the character $\chi^{\lambda}$. Define $\epsilon_{\lambda,i} \eqdef \epsilon_{\lambda^{\downarrow i}} \epsilon_{\lambda} $, viewing $\epsilon_{\lambda^{\downarrow i}} \in \CC[S_{n-1}]$ as an element of $\CC[S_n]$. The $\epsilon_{\lambda,i}$ lie in $Z_{n,v}$ and are again idempotents, and one can show that $\sum_{i \in \lambda} \epsilon_{\lambda,i} = \epsilon_{\lambda}$ (this is basically the branching rule for restricting representations from $S_n$ to $S_{n-1}$), which implies that they are the basis of primitive orthogonal idempotents of $Z_{n,v}$. The next two definitions are due to Strahov \cite{strahov}.

\begin{defn} For $v \in [n]$, let $\cyc_v^+(\sigma)$ be the size of the cycle of $\sigma$ containing $v$, and $\cyc_v^-(\sigma)$ the partition of cycle sizes of the cycles not containing $v$. The \emph{pointed Frobenius characteristic} $\ch_v : \CC[S_n] \to \Lambda[t]$ is the linear map sending $\sigma \in S_n$ to
    \begin{equation*}
        \frac{1}{(n-1)!} p_{\cyc_v^-(\sigma)} t^{\cyc_v^+(\sigma)-1}.
    \end{equation*}
\end{defn}

\begin{defn} The \emph{pointed Schur function} associated to $\lambda \vdash n$ and $i \in \lambda$ is
\begin{equation*}
s_{\lambda,i} = \frac{n!}{\chi^{\lambda}(1)} \ch_n(\epsilon_{\lambda,i}).
\end{equation*} \end{defn}
Pointed Schur functions form a basis of $\Lambda[t]$ because the idempotents $\epsilon_{\lambda,i}$ form a basis of $Z_{n,v}$. Note that $s_{\lambda,i}$ has degree $|\lambda|-1$.

\begin{exlem} \label{exlem:pointed-e} $s_{(1^n),1} = \sum_{k=1}^{n} (-t)^{k-1} e_{n-k}$.     \end{exlem}
    
    \begin{proof}
        Since $\chi^{(1^n)}(\sigma) = \sgn(\sigma)$ we have
        \begin{align*}
            \epsilon_{(1^n),1} = \epsilon_{(1^{n-1})} \epsilon_{(1^n)} &= \frac{1}{n!(n-1)!}\sum_{\sigma \in S_{n-1}} \sgn(\sigma) \sigma \sum_{\tau \in S_{n}} \sgn(\tau)\tau\\
            &= \frac{1}{n!(n-1)!}\sum_{\sigma \in S_{n-1}} \sgn(\sigma) \sgn(\sigma) \sum_{\tau \in S_{n}} \sgn(\tau)\tau\\
            &= \frac{1}{n!} \sum_{\tau \in S_{n}} \sgn(\tau)\tau.
        \end{align*}
        Thus,
        \begin{equation}
            s_{(1^n),1} = \frac{n!}{\chi^{(1^n)}(1)} \ch_n(\epsilon_{(1^{n-1})} \epsilon_{(1^n)} ) = \ch_n\left( \sum_{\tau \in S_{n}} \sgn(\tau)\tau \right). \label{eq:enn}
        \end{equation}
    
        Letting $z_{\lambda}$ be the size of the centralizer of a permutation of cycle type $\lambda$ (so there are $n!/z_{\lambda}$ permutations in $S_n$ of cycle type $\lambda$), one verifies similarly that the number of permutations $\sigma \in S_n$ with $\cyc_n^-(\sigma) = \lambda$ is $(n-1)!/z_{\lambda}$. Applying this fact, and breaking up the sum \eqref{eq:enn} according to the size $k$ of the cycle containing $n$,
        \begin{align*}
            &s_{(1^n),1} = \sum_{k=1}^n (-1)^{k-1}  \sum_{\lambda \vdash n-k} \sgn(\lambda) \frac{p_{\lambda}}{z_{\lambda}} t^{k-1} .
        \end{align*}
        Here $\sgn(\lambda) = (-1)^{n - \ell(\lambda)}$ is the sign of a permutation of cycle type $\lambda$. The lemma now follows from the symmetric function identity $\sum_{\lambda \vdash m} \sgn(\lambda) \frac{p_{\lambda}}{z_{\lambda}} = e_{m}$ \cite[Proposition 7.7.6]{stanley-ec2}.
    
    \end{proof}

Let $S_X$ be the group of permutations of a set $X$, and let $[a,b] \eqdef \{a, a+1, \ldots, b\}$. Define a bilinear product $\circ : \CC[S_n] \times \CC[S_{[n,n+m-1]}] \to Z_{n+m-1,n}$ by
\begin{equation*}
    (\alpha, \beta) \mapsto \alpha \circ \beta \eqdef \frac{1}{(n-1)!(m-1)!}\sum_{\substack{\sigma \in S_{n+m-1}\\ \sigma(n) = n}} \sigma \alpha \beta \sigma^{-1}.
\end{equation*}

\begin{prop} \label{prop:pointed-frob-multiplicative} $\ch_n(\alpha \circ \beta) = \ch_n(\alpha)\ch_n(\beta)$. \end{prop}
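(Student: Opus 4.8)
The plan is to reduce to the case of group elements and then carry out a direct cycle computation; no representation theory is needed beyond the definitions. By bilinearity of $\circ$ and linearity of $\ch_n$, I would immediately reduce to the case $\alpha = \sigma \in S_n$ and $\beta = \tau \in S_{[n,n+m-1]}$.

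The first observation is that conjugating a permutation of $S_{n+m-1}$ by some $\rho$ with $\rho(n) = n$ leaves both $\cyc_n^+$ and $\cyc_n^-$ unchanged, since it merely relabels the cycles while keeping $n$ in place. Hence the $(n+m-2)!$ permutations $\rho\sigma\tau\rho^{-1}$ occurring in the sum defining $\sigma \circ \tau$ all have the same image under $\ch_n$, and the normalizing factorials collapse to give
\[
\ch_n(\sigma \circ \tau) = \tfrac{1}{(n-1)!\,(m-1)!}\, p_{\cyc_n^-(\sigma\tau)}\, t^{\cyc_n^+(\sigma\tau)-1}.
\]

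The crux is then the identity $\cyc_n^+(\sigma\tau) = \cyc_n^+(\sigma) + \cyc_n^+(\tau) - 1$ together with $\cyc_n^-(\sigma\tau) = \cyc_n^-(\sigma) \cup \cyc_n^-(\tau)$. This holds because $\sigma$ fixes $[n+1, n+m-1]$ pointwise and $\tau$ fixes $[1,n-1]$ pointwise, so $\sigma$ and $\tau$ interfere only at the common point $n$: if the cycle of $\sigma$ through $n$ is $(n\,x_2\,\cdots\,x_a)$ and that of $\tau$ is $(n\,y_2\,\cdots\,y_b)$, then a one-line check shows the cycle of $\sigma\tau$ through $n$ is $(n\,y_2\,\cdots\,y_b\,x_2\,\cdots\,x_a)$, of length $a+b-1$, while every other cycle of $\sigma$ or of $\tau$ persists unchanged as a cycle of $\sigma\tau$. (The cases $a = 1$ or $b = 1$, where $\sigma$ or $\tau$ fixes $n$, are subsumed.) Feeding this back into the displayed formula and using $p_{\lambda \cup \mu} = p_\lambda p_\mu$ yields
\[
\ch_n(\sigma\circ\tau) = \tfrac{1}{(n-1)!} p_{\cyc_n^-(\sigma)} t^{\cyc_n^+(\sigma)-1} \cdot \tfrac{1}{(m-1)!} p_{\cyc_n^-(\tau)} t^{\cyc_n^+(\tau)-1} = \ch_n(\sigma)\,\ch_n(\tau),
\]
where the second factor is $\ch_n(\tau)$ for $\tau$ a permutation of the $m$-element set $[n,n+m-1]$, hence the normalization $1/(m-1)!$.

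The only genuinely nonroutine step is the cycle-merging identity; everything else is bookkeeping with factorials. As a sanity check I would verify the degenerate cases and note that replacing $\sigma\tau$ by $\tau\sigma$, or by any conjugate fixing $n$, changes neither $\cyc_n^+$ nor $\cyc_n^-$ — which is precisely why the (otherwise arbitrary) order of the product in the definition of $\circ$ is harmless, and also why $\alpha \circ \beta$ lands in $Z_{n+m-1,n}$.
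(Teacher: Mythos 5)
Your proof is correct and takes essentially the same route as the paper's: reduce by bilinearity to $\alpha = \sigma \in S_n$, $\beta = \tau \in S_{[n,n+m-1]}$ and invoke the identities $\cyc_n^-(\sigma\tau) = \cyc_n^-(\sigma) \cup \cyc_n^-(\tau)$ and $\cyc_n^+(\sigma\tau) = \cyc_n^+(\sigma) + \cyc_n^+(\tau) - 1$, with your conjugation-invariance and factorial bookkeeping simply making explicit what the paper leaves implicit. (Incidentally, your form of the first identity is the right one; the paper's display has a typo, writing $\cyc_n^+(\beta)$ where $\cyc_n^-(\beta)$ is meant.)
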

    \begin{proof}
        It suffices to assume $\alpha \in S_n$ and $\beta \in S_{[n,n+m-1]}$, in which case the proposition follows from the identities
        \begin{align*}
            &\cyc_n^-(\alpha\beta) = \cyc_n^-(\alpha) \cup \cyc_n^+(\beta)\\
            &\cyc_n^+(\alpha\beta) = \cyc_n^+(\alpha) + \cyc_n^+(\beta) - 1.
        \end{align*}
    \end{proof}

Let $\psi : \Lambda[t] \to \Lambda$ be the linear map with $\psi(p_{\lambda} t^{i-1}) = p_{\lambda \cup i}$ for $i \geq 1$. It can be shown that $\psi(s_{\lambda,i}) = (n{-}1)^{-1} f^{\lambda^{\downarrow i}} s_{\lambda}$, so if $f$ is pointed Schur positive then $\psi(f)$ is Schur positive. The products $s_{\lambda,i}s_{\mu,j}$ are usually \emph{not} pointed Schur positive, but a weaker result holds.
\begin{lem} \label{lem:pointed-schur-product} $\psi(s_{\lambda,i}s_{\mu,j})$ is Schur positive. \end{lem}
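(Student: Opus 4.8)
The plan is to push the statement into the group algebra of $S_N$ with $N = |\lambda|+|\mu|-1$, where the positivity criterion of Lemma~\ref{lem:schur-positive} is available. Write $n = |\lambda|$ and $m = |\mu|$. The first thing I would record is that $\psi$ composed with the pointed Frobenius characteristic is, up to a scalar, the ordinary Frobenius characteristic: unwinding the definitions, for $\sigma \in S_N$ one has $\psi(\ch_n(\sigma)) = \tfrac{1}{(N-1)!}\,p_{\cyc_n^-(\sigma)\cup\cyc_n^+(\sigma)} = \tfrac{1}{(N-1)!}\,p_{\cyc(\sigma)} = N\ch(\sigma)$, so by linearity $\psi\circ\ch_n = N\ch$ on all of $\CC[S_N]$. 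Next I would realize $s_{\lambda,i}s_{\mu,j}$ as $\ch_n$ of a single element of $\CC[S_N]$. The construction of the idempotents $\epsilon_{\mu,j}$ and of the pointed Frobenius characteristic is invariant under relabeling the ground set and moving the basepoint (conjugating by a permutation carrying one basepoint to another takes $\epsilon_{\mu,j}$ to the corresponding idempotent for the new basepoint, and leaves its image under the pointed Frobenius characteristic unchanged), so $s_{\mu,j} = \tfrac{m!}{\chi^\mu(1)}\ch_n(\epsilon'_{\mu,j})$ where $\epsilon'_{\mu,j} \in \CC[S_{[n,N]}]$ is built exactly as $\epsilon_{\mu,j}$ but with the size-$m$ symmetric group realized as $S_{[n,N]}$ and basepoint $n$. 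Setting $\gamma \eqdef \epsilon_{\lambda,i}\circ\epsilon'_{\mu,j}\in Z_{N,n}$, Proposition~\ref{prop:pointed-frob-multiplicative} then gives
\[
  s_{\lambda,i}\,s_{\mu,j} = \frac{n!\,m!}{\chi^\lambda(1)\,\chi^\mu(1)}\,\ch_n(\epsilon_{\lambda,i})\,\ch_n(\epsilon'_{\mu,j}) = \frac{n!\,m!}{\chi^\lambda(1)\,\chi^\mu(1)}\,\ch_n(\gamma).
\]
Applying $\psi$ and the first observation, $\psi(s_{\lambda,i}s_{\mu,j})$ is the positive scalar $\tfrac{N\,n!\,m!}{\chi^\lambda(1)\chi^\mu(1)}$ times $\ch(\gamma)$, so by Lemma~\ref{lem:schur-positive} it suffices to prove $\tr(\gamma|_{V^\nu}) \ge 0$ for all $\nu \vdash N$.

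The second step is to evaluate that trace. By definition of $\circ$ we have $\gamma = \tfrac{1}{(n-1)!(m-1)!}\sum_{\sigma(n)=n}\sigma\,(\epsilon_{\lambda,i}\epsilon'_{\mu,j})\,\sigma^{-1}$, a sum over the $(N-1)!$ permutations of $[N]$ fixing $n$; since the trace on a representation is conjugation-invariant, every summand contributes the same amount, so $\tr(\gamma|_{V^\nu}) = \tfrac{(N-1)!}{(n-1)!(m-1)!}\tr(\epsilon_{\lambda,i}\epsilon'_{\mu,j}|_{V^\nu})$, and everything comes down to showing $\tr(\epsilon_{\lambda,i}\epsilon'_{\mu,j}|_{V^\nu})\ge 0$. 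For this I would observe that $\epsilon_{\lambda,i} = \epsilon_{\lambda^{\downarrow i}}\epsilon_\lambda$ is Hermitian: its two factors are each Hermitian (characters of symmetric groups are real), and they commute because $\epsilon_\lambda$ is central in $\CC[S_n]$, so their product is self-adjoint by Proposition~\ref{prop:hermitian}; the same holds for $\epsilon'_{\mu,j}$. Equipping $V^\nu$ with an $S_N$-invariant Hermitian inner product makes every group element unitary, so $x\mapsto x^*$ is the adjoint on $V^\nu$, and therefore $P \eqdef \epsilon_{\lambda,i}|_{V^\nu}$ and $Q \eqdef \epsilon'_{\mu,j}|_{V^\nu}$ are orthogonal projections. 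Then, using $P=P^*=P^2$, $Q=Q^*=Q^2$ and cyclicity of the trace, $\tr(PQ) = \tr(P^2Q^2) = \tr\bigl((QP)(QP)^*\bigr) \ge 0$. This is just the trace-side reading of the ``product of two positive semidefinite operators has nonnegative spectrum'' trick used in Example~\ref{ex:path} and Theorem~\ref{thm:edge-bipartite-hypergraphs}.

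I do not expect a genuine obstacle: the two ideas doing the work are (i) that $\psi\circ\ch_n$ is a positive multiple of $\ch$, which is what makes Lemma~\ref{lem:schur-positive} applicable after forgetting the pointing, and (ii) that although $\gamma$ is neither idempotent nor even Hermitian and may have eigenvalues that are not nonnegative, its trace on each $V^\nu$ is nonnegative because that trace only sees the conjugation-average of a product of two orthogonal projections. The one delicate point is the bookkeeping in the first step: realizing $s_{\mu,j}$ inside $\CC[S_{[n,N]}]$ with the basepoint convention that matches the definition of $\circ$, and keeping the factorials straight. Finally, it is worth stressing what the argument does \emph{not} give: $s_{\lambda,i}s_{\mu,j}$ itself is generally \emph{not} pointed Schur positive, since the eigenvalues of $\gamma$ governing its expansion in the idempotent basis of $Z_{N,n}$ can be negative; applying $\psi$ is precisely the step that trades that finer spectral data for the coarser traces $\tr(\gamma|_{V^\nu})$, which are nonnegative by the projection computation above.
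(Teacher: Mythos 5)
Your proof is correct and follows essentially the same route as the paper's: reduce via $\psi\circ\ch_n$ being a positive multiple of $\ch$ and Proposition~\ref{prop:pointed-frob-multiplicative} to the nonnegativity of $\tr\bigl((\epsilon_{\lambda,i}\circ\epsilon_{\mu,j})\big|_{V^\nu}\bigr)$, use conjugation-invariance to drop the averaging, and finish with positive semidefiniteness of the Hermitian idempotents (the paper phrases this last step in the group algebra with the central idempotent $\epsilon_\nu$ and the product-of-PSD-operators fact, while you restrict to $V^\nu$ and use $\tr(PQ)\ge 0$ for orthogonal projections --- the same idea). Your explicit bookkeeping of the relabeled factor $\epsilon'_{\mu,j}\in\CC[S_{[n,N]}]$ and of the scalar $N$ in $\psi\circ\ch_n=N\ch$ is if anything slightly more careful than the paper's statement $\ch=\psi\circ\ch_v$.
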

    \begin{proof}
    Using Proposition~\ref{prop:pointed-frob-multiplicative} and the fact that $\ch = \psi \circ \ch_v$,
    \begin{equation*}
        \psi(s_{\lambda,i}s_{\mu,j}) = \frac{|\lambda|!}{\chi^{\lambda}(1)} \frac{|\mu|!}{\chi^{\mu}(1)}\ch(\epsilon_{\lambda,i} \circ \epsilon_{\mu,j}).
    \end{equation*}
    By Lemma~\ref{lem:trace}, it is enough to show that $\tr((\epsilon_{\lambda,i} \circ \epsilon_{\mu,j})\epsilon_{\nu})$ is nonnegative for all $\nu \vdash |\lambda|+|\mu|-1$. Now,
    \begin{align} \label{eq:trace}
        (n-1)!(m-1)!\tr((\epsilon_{\lambda,i} \circ \epsilon_{\mu,j})\epsilon_{\nu}) &= \sum_{\substack{\sigma \in S_{n+m-1}\\ \sigma(n)=n}} \tr(\sigma \epsilon_{\lambda,i}\epsilon_{\mu,j} \sigma^{-1} \epsilon_{\nu}) \nonumber\\
        &= \sum_{\substack{\sigma}} \tr(\epsilon_{\lambda,i}\epsilon_{\mu,j} \sigma^{-1} \epsilon_{\nu} \sigma) \nonumber \\
        &= \sum_{\substack{\sigma}} \tr(\epsilon_{\lambda,i}\epsilon_{\mu,j} \epsilon_{\nu}) \qquad \text{(as $\epsilon_{\nu}$ is central)}.
    \end{align}

    Since the characters of $S_n$ are real-valued, Lemma~\ref{lem:nice-centralizer} shows that $\epsilon_{\lambda,i}$ and $\epsilon_{\mu,j}$ (and of course $\epsilon_{\nu}$) are Hermitian. Being idempotent, they are therefore positive semidefinite, as is $\epsilon_{\mu,j}\epsilon_{\nu}$ since $\epsilon_{\nu}$ is central. It follows that $\epsilon_{\lambda,i}\epsilon_{\mu,j} \epsilon_{\nu}$ has nonnegative eigenvalues and that the sum \eqref{eq:trace} is nonnegative.
    \end{proof} 

    We now come to the main result of this section.
\begin{thm} Let $(G,v)$ and $(H,w)$ be pointed graphs. If $X_{G,v}$ and $X_{H,w}$ are pointed Schur positive, then $X_{G \vee H}$ is Schur positive. \end{thm}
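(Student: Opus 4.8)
The plan is to reduce the statement to Lemma~\ref{lem:pointed-schur-product} using multiplicativity of the pointed chromatic symmetric function together with the projection $\psi : \Lambda[t] \to \Lambda$. First I would record that $\psi(X_{K,v}) = X_K$ for every pointed graph $(K,v)$: applying $\psi$ termwise to Definition~\ref{defn:pointed-chrom-sym} sends $p_{\type_v^-(K_S)} t^{\type_v^+(K_S)-1}$ to $p_{\type_v^-(K_S)\,\cup\,\type_v^+(K_S)} = p_{\type(K_S)}$, and summing over $S \subseteq E(K)$ recovers Stanley's power-sum expansion of $X_K$ (Theorem~\ref{thm:power-sum-expansion}). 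Combining this with the identity $X_{G \vee H, v} = X_{G,v}X_{H,w}$ proved above gives $X_{G \vee H} = \psi\bigl(X_{G,v}X_{H,w}\bigr)$.

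Next I would expand the two factors in the pointed Schur basis, $X_{G,v} = \sum_{\lambda,i} c_{\lambda,i}\, s_{\lambda,i}$ and $X_{H,w} = \sum_{\mu,j} d_{\mu,j}\, s_{\mu,j}$, where by hypothesis all $c_{\lambda,i}, d_{\mu,j} \geq 0$; these are finite sums, since $X_{G,v}$ is homogeneous of degree $|V(G)|-1$ in the grading with $\deg t = 1$, so only $\lambda \vdash |V(G)|$ (respectively $\mu \vdash |V(H)|$) occur. Because $\psi$ is $\Lambda$-linear it distributes over the resulting finite double sum, yielding
\[
X_{G \vee H} = \sum_{\lambda,i}\ \sum_{\mu,j} c_{\lambda,i}\, d_{\mu,j}\ \psi\bigl(s_{\lambda,i}s_{\mu,j}\bigr).
\]
Lemma~\ref{lem:pointed-schur-product} applies to all pairs $\lambda, \mu$ (including unequal sizes, via the $\circ$ product with $n = |\lambda|$, $m = |\mu|$) and shows each $\psi(s_{\lambda,i}s_{\mu,j})$ is Schur positive; hence $X_{G \vee H}$ is a nonnegative combination of Schur-positive functions and is Schur positive.

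I expect no real obstacle to remain at this point: the representation-theoretic content has already been extracted in Lemma~\ref{lem:pointed-schur-product} (itself resting on Lemma~\ref{lem:nice-centralizer} and Proposition~\ref{prop:pointed-frob-multiplicative}). The only steps needing an explicit line are the compatibility $\psi(X_{\bullet,v}) = X_\bullet$ and the $\Lambda$-linearity of $\psi$ that lets it pass through the finite sum, both immediate from the definitions; so this theorem is essentially a short assembly of previously established facts rather than a new argument.
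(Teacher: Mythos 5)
Your proposal is correct and takes essentially the same route as the paper's own proof: the multiplicativity $X_{G \vee H, v} = X_{G,v}X_{H,w}$, the compatibility $\psi(X_{G \vee H, v}) = X_{G \vee H}$, and a termwise application of Lemma~\ref{lem:pointed-schur-product} to the pointed Schur expansions of the two factors. The paper's argument is just a terser assembly of these same three facts, so nothing further is needed.
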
 

    \begin{proof} It is clear from the definitions that $\psi(X_{G \vee H, v}) = X_{G \vee H}$. Since $X_{G \vee H, v}= X_{G,v}X_{H,w}$ by Proposition~\ref{prop:pointed-frob-multiplicative}, the theorem follows from Lemma~\ref{lem:pointed-schur-product}.
    \end{proof}
In Section~\ref{sec:pointed-chrom-sym-path} we will obtain explicit expressions for pointed chromatic symmetric functions of paths (when the distinguished vertex is a leaf) and cycles, which in particular will be pointed Schur positive. Even without such expressions we can obtain a stronger result for paths than pointed Schur positivity. First, a lemma analogous to Lemma~\ref{lem:trace}.

\begin{lem} \label{lem:pointed-trace} For any $\alpha \in \CC[S_n]$, the coefficient of $s_{\lambda,i}$ in $\ch_v(\alpha)$ is $\frac{\chi^{\lambda}(1)}{n!\rank(\epsilon_{\lambda,i})}\tr(\alpha \epsilon_{\lambda,i})$. \end{lem}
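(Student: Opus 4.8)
The plan is to imitate the proof of Lemma~\ref{lem:trace}, carefully tracking the fact that the idempotents $\epsilon_{\lambda,i}$ are not the primitive \emph{central} idempotents of $\CC[S_n]$ but rather primitive idempotents of the smaller algebra $Z_{n,v}$, so they project onto isotypic pieces of the restriction to $S_{n-1}$ rather than onto irreducible $S_n$-modules. Concretely, $\epsilon_{\lambda,i}=\epsilon_{\lambda^{\downarrow i}}\epsilon_\lambda$ projects $\CC[S_n]$ onto the $\lambda$-isotypic component of the left regular representation, further cut down to the $\lambda^{\downarrow i}$-isotypic part under the $S_{n-1}$-action on the right. As a left $S_n$-module this image is a direct sum of copies of $V^\lambda$; the number of copies is $\rank(\epsilon_{\lambda,i})/\chi^\lambda(1)$, and by the branching rule this equals $f^{\lambda^{\downarrow i}}$, but I will phrase things in terms of $\rank(\epsilon_{\lambda,i})$ so the statement matches.

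First I would write $\ch_v(\alpha)=\sum_{\lambda,i} c_{\lambda,i}\, s_{\lambda,i}$ and use the definition $s_{\lambda,i}=\frac{n!}{\chi^\lambda(1)}\ch_n(\epsilon_{\lambda,i})$ together with the orthogonality $\epsilon_{\lambda,i}\epsilon_{\mu,j}=\delta_{(\lambda,i),(\mu,j)}\epsilon_{\lambda,i}$ of the primitive idempotents of $Z_{n,v}$ to isolate $c_{\lambda,i}$. The cleanest route is to apply a suitable trace functional to both sides of $\alpha\,\epsilon_{\lambda,i}=\bigl(\sum_{\mu,j} c_{\mu,j}'\,\epsilon_{\mu,j}\bigr)\epsilon_{\lambda,i}=c_{\lambda,i}'\,\epsilon_{\lambda,i}$, where the $c'_{\mu,j}$ are the coordinates of $\alpha$ projected into $Z_{n,v}$; but since we only care about traces this projection is harmless. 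So I would compute $\tr(\alpha\,\epsilon_{\lambda,i})$ directly: it equals $\tr\bigl(\alpha|_{\operatorname{im}\epsilon_{\lambda,i}}\bigr)$ because $\epsilon_{\lambda,i}$ is idempotent. On $\operatorname{im}\epsilon_{\lambda,i}\cong (V^\lambda)^{\oplus r}$ with $r=\rank(\epsilon_{\lambda,i})/\chi^\lambda(1)$, and by linearity it suffices to take $\alpha\in S_n$, in which case $\alpha$ acts as $r$ copies of its action on $V^\lambda$, giving $\tr(\alpha\,\epsilon_{\lambda,i})=r\,\chi^\lambda(\alpha)$.

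Next I would relate $\chi^\lambda(\alpha)$ to the coefficient of $s_{\lambda,i}$ in $\ch_v(\alpha)$. Again reduce to $\alpha=\sigma\in S_n$. Here the key compatibility is $\ch=\psi\circ\ch_v$ (used in the proof of Lemma~\ref{lem:pointed-schur-product}) and the fact, recorded just before that lemma, that $\psi(s_{\lambda,i})=(n-1)^{-1}f^{\lambda^{\downarrow i}} s_\lambda$. Applying $\psi$ to $\ch_v(\sigma)=\sum_{\mu,j}c_{\mu,j}s_{\mu,j}$ yields $\ch(\sigma)=\frac1{n!}p_{\cyc(\sigma)}=\sum_{\mu,j}c_{\mu,j}(n-1)^{-1}f^{\mu^{\downarrow j}}s_\mu$, and extracting the $s_\lambda$-coefficient gives $\frac1{n!}\chi^\lambda(\sigma)=(n-1)^{-1}\sum_{j\in\lambda}c_{\lambda,j}f^{\lambda^{\downarrow j}}$. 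This alone does not separate the individual $c_{\lambda,i}$ — it only pins down a weighted sum over $i$ — so $\psi$ is too lossy, and I would instead go back to the idempotent computation: $\tr(\sigma\,\epsilon_{\lambda,i})=r\,\chi^\lambda(\sigma)$ on the one hand, while on the other hand $\ch_n(\sigma\,\epsilon_{\lambda,i})$ can be compared with $\ch_n(\epsilon_{\lambda,i})=\frac{\chi^\lambda(1)}{n!}s_{\lambda,i}$. The honest identity I want is $c_{\lambda,i}=\frac{\chi^\lambda(1)}{n!\,\rank(\epsilon_{\lambda,i})}\tr(\alpha\,\epsilon_{\lambda,i})$; once I have $\tr(\alpha\,\epsilon_{\lambda,i})=\frac{\rank(\epsilon_{\lambda,i})}{\chi^\lambda(1)}\chi^\lambda(\alpha)$ for $\alpha\in S_n$, it remains only to check that for $\alpha\in S_n$ the coefficient of $s_{\lambda,i}$ in $\ch_v(\alpha)$ is $\frac{\chi^\lambda(\alpha)}{n!}$, independently of $i$.

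The main obstacle is exactly this last point: proving that $[s_{\lambda,i}]\ch_v(\sigma)=\frac{\chi^\lambda(\sigma)}{n!}$ for every $\sigma\in S_n$ and every $i\in\lambda$. I expect this to follow from expressing $\sigma=\sum_{\lambda,i}a_{\lambda,i}\epsilon_{\lambda,i}+(\text{stuff orthogonal to }Z_{n,v})$ and noting that $a_{\lambda,i}=\frac{\tr(\sigma\epsilon_{\lambda,i})}{\rank(\epsilon_{\lambda,i})}=\frac{\chi^\lambda(\sigma)}{\chi^\lambda(1)}$, so that $\ch_v$ applied to the $Z_{n,v}$-part of $\sigma$ gives $\sum_{\lambda,i}\frac{\chi^\lambda(\sigma)}{\chi^\lambda(1)}\ch_n(\epsilon_{\lambda,i})=\sum_{\lambda,i}\frac{\chi^\lambda(\sigma)}{n!}s_{\lambda,i}$; and the orthogonal complement contributes nothing because $\ch_v$ vanishes on it — here one uses that $\ch_v$ is a trace-type functional (it only sees conjugacy-by-$S_{n-1}$ data) so it factors through the projection $\CC[S_n]\to Z_{n,v}$, or equivalently $\ch_v(x)=\ch_v(\frac{1}{(n-1)!}\sum_{\tau(v)=v}\tau x\tau^{-1})$, which is immediate from the defining formula since $\cyc_v^\pm$ are invariant under conjugation by elements fixing $v$. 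Assembling: $[s_{\lambda,i}]\ch_v(\alpha)=\frac{1}{n!}\,\frac{\chi^\lambda(1)}{\rank(\epsilon_{\lambda,i})}\tr(\alpha\epsilon_{\lambda,i})$ for all $\alpha$ by linearity, which is the claim.
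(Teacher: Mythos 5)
Your reduction hinges on the claim that, for $\alpha\in S_n$, one has $\tr(\alpha\epsilon_{\lambda,i})=r\,\chi^{\lambda}(\alpha)$ with $r=\rank(\epsilon_{\lambda,i})/\chi^{\lambda}(1)=f^{\lambda^{\downarrow i}}$, together with the companion claim that $[s_{\lambda,i}]\ch_v(\sigma)=\chi^{\lambda}(\sigma)/n!$ independently of $i$. Both are false, and the reason is exactly the non-centrality you set out to track: $\epsilon_{\lambda,i}=\epsilon_{\lambda^{\downarrow i}}\epsilon_{\lambda}$ acts by left multiplication, so it cuts the block $\epsilon_{\lambda}\CC[S_n]\cong V^{\lambda}\otimes (V^{\lambda})^{*}$ down by the \emph{left} $S_{n-1}$-action (not the right one, as you wrote); the image $\epsilon_{\lambda,i}\CC[S_n]$ is therefore only a left $\CC[S_{n-1}]$-module, not a left $S_n$-submodule, and $\tr(\alpha\epsilon_{\lambda,i})=\tr(\epsilon_{\lambda,i}\alpha\epsilon_{\lambda,i})$ is the trace of a compression of $\alpha$, not of $\alpha$ acting on $r$ copies of $V^{\lambda}$. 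Concretely, $\tr(\alpha\epsilon_{\lambda,i})=f^{\lambda}\,\tr_{V^{\lambda}}\bigl(\rho^{\lambda}(\alpha)P\bigr)$, where $P$ projects $V^{\lambda}$ onto its $\lambda^{\downarrow i}$-isotypic subspace under $S_{n-1}$, and this is not proportional to $\chi^{\lambda}(\alpha)$ unless $\alpha$ commutes with $P$. Counterexample: $n=3$, $v=3$, $\lambda=(2,1)$, $i=2$, $\alpha=(1\,3)$. Then $\epsilon_{\lambda,i}=\tfrac12\bigl(1-(1\,2)\bigr)\cdot\tfrac13\bigl(2-(1\,2\,3)-(1\,3\,2)\bigr)$, and $\tr(\alpha\epsilon_{\lambda,i})=3!\,[1](\alpha\epsilon_{\lambda,i})=1$, whereas $r\chi^{(2,1)}((1\,3))=0$. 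Since the lemma itself is true, this also refutes the companion claim: the coefficients of $s_{(2,1),2}$ and $s_{(2,1),1}$ in $\ch_3((1\,3))$ are $\tfrac16$ and $-\tfrac16$, so they depend on $i$ and are not $\chi^{\lambda}(\sigma)/n!$. The same slip recurs in your final paragraph in the step $a_{\lambda,i}=\tr(\sigma\epsilon_{\lambda,i})/\rank(\epsilon_{\lambda,i})=\chi^{\lambda}(\sigma)/\chi^{\lambda}(1)$.

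The fix is already latent in your last paragraph, and it is the paper's proof: do not evaluate these coefficients as characters. For $\alpha\in Z_{n,v}$, write $\alpha=\sum_{\mu,j}a_{\mu,j}\epsilon_{\mu,j}$; orthogonality and idempotency give $a_{\lambda,i}=\tr(\alpha\epsilon_{\lambda,i})/\rank(\epsilon_{\lambda,i})$, and applying $\ch_v$ with $\ch_v(\epsilon_{\lambda,i})=\frac{\chi^{\lambda}(1)}{n!}s_{\lambda,i}$ (the definition of $s_{\lambda,i}$) already yields the stated formula in this case. For general $\alpha$, pass to $\beta=\frac{1}{(n-1)!}\sum_{\tau\in S_{n-1}}\tau\alpha\tau^{-1}\in Z_{n,v}$: your (correct) observation that $\ch_v$ is invariant under conjugation by permutations fixing $v$ gives $\ch_v(\beta)=\ch_v(\alpha)$, while cyclicity of the trace together with $\epsilon_{\lambda,i}\in Z_{n,v}$ gives $\tr(\beta\epsilon_{\lambda,i})=\tr(\alpha\epsilon_{\lambda,i})$; now apply the $Z_{n,v}$ case to $\beta$. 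This two-step argument never requires identifying $\tr(\alpha\epsilon_{\lambda,i})$ with a character value, which is the step that cannot be repaired.
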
 
\begin{proof}
First suppose $\alpha \in Z_{n,v}$. Since the $\epsilon_{\lambda,i}$ are commuting orthogonal idempotents spanning $Z_{n,v}$, the coefficient of $\epsilon_{\lambda,i}$ in $\alpha$ is $\tr(\alpha \epsilon_{\lambda,i})/\rank(\epsilon_{\lambda,i})$. By definition of $s_{\lambda,i}$, this coefficient is equal to $\frac{n!}{\chi^{\lambda}(1)} [s_{\lambda,i}]\ch_v(\alpha)$, so the lemma holds when $\alpha \in Z_{n,v}$.

Given an arbitrary $\alpha \in \CC[S_n]$, define $\beta = \frac{1}{(n-1)!}\sum_{\sigma \in S_{n-1}} \sigma \alpha \sigma^{-1}$. Then $\beta \in Z_{n,v}$ and $\ch_v(\beta) = \ch_v(\alpha)$. On the other hand,
\begin{align*}
    \tr(\beta \epsilon_{\lambda,i}) &= \frac{1}{(n-1)!}\sum_{\sigma \in S_{n-1}} \tr(\sigma \alpha \sigma^{-1} \epsilon_{\lambda,i}) = \frac{1}{(n-1)!}\sum_{\sigma \in S_{n-1}} \tr( \alpha \sigma^{-1} \epsilon_{\lambda,i}\sigma)\\
    &= \frac{1}{(n-1)!}\sum_{\sigma \in S_{n-1}} \tr(\alpha \epsilon_{\lambda,i}) \qquad \text{(since $\epsilon_{\lambda,i} \in Z_{n,v}$)}\\
    &= \tr(\alpha \epsilon_{\lambda,i}).
\end{align*}
Therefore the lemma holds by applying the previous paragraph to $\beta$.
\end{proof}

\begin{thm} \label{thm:path-pointed-schur-positive} Let $G$ be a graph on $[n,n+m-1]$ and $P$ the path on $[n]$, both with distinguished vertex $n$. If $X_{G,n}$ is pointed Schur positive, then $X_{G \vee P, 1}$ is also pointed Schur positive \end{thm}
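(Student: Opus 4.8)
The plan is to strip the path off one edge at a time with the deletion--contraction recurrence (Lemma~\ref{lem:del-con}), reducing the whole statement to a single lemma about attaching a pendant edge, which I would then prove representation-theoretically.

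\textbf{Reduction.} I would induct on $n$. The case $n=1$ is the hypothesis, since then $P$ is a lone distinguished vertex and $G\vee P = G$. For $n\ge 2$ the edge $e=(1,2)$ of $P$ is incident to the distinguished vertex $1$ of $G\vee P$, so Lemma~\ref{lem:del-con} gives
\[
X_{G\vee P,\,1} \;=\; X_{(G\vee P)\setminus e,\,1} \;-\; t\,X_{(G\vee P)/e,\,1}.
\]
Let $P'$ be the path on $[2,n]$, still pointed at $n$. Then $(G\vee P)\setminus e = (G\vee P')\sqcup\{1\}$ with the vertex $1$ isolated, so $X_{(G\vee P)\setminus e,\,1} = X_{G\vee P'} = \psi(X_{G\vee P',\,2})$ by the defining property of $\psi$; and $(G\vee P)/e = G\vee P'$ with distinguished vertex $2$ (the endpoint of $P'$ not attached to $G$), so $X_{(G\vee P)/e,\,1} = X_{G\vee P',\,2}$. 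By the inductive hypothesis applied to the shorter path $P'$, the function $h := X_{G\vee P',\,2}$ is pointed Schur positive, and $X_{G\vee P,\,1} = \psi(h) - th$. So the theorem follows from the assertion: \emph{if $h\in\Lambda[t]$ is homogeneous and pointed Schur positive, then $\psi(h)-th$ is pointed Schur positive.} Informally this says that attaching a pendant edge and sliding the root onto its tip preserves pointed Schur positivity, in parallel with the fact that the chromatic operator of a path factors into positive semidefinite pieces (Example~\ref{ex:path}).

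\textbf{Proof of the pendant-edge lemma.} Write $h = \sum_{\lambda\vdash n,\ i\in\lambda} c_{\lambda,i}\,s_{\lambda,i}$ with all $c_{\lambda,i}\ge 0$ (the value of $n$ is fixed since $h$ is homogeneous). Then $\beta := \sum_{\lambda,i} c_{\lambda,i}\tfrac{n!}{\chi^{\lambda}(1)}\epsilon_{\lambda,i}\in Z_{n,n}\subseteq\CC[S_n]$ satisfies $\ch_n(\beta)=h$, and $\beta$ is positive semidefinite since each $\epsilon_{\lambda,i}$ is a Hermitian (Lemma~\ref{lem:nice-centralizer}) idempotent. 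Viewing $\CC[S_n]\subseteq\CC[S_{n+1}]$ in the usual way, I would set
\[
\gamma \;=\; n\,\beta\,\bigl(1-(n\,\,n{+}1)\bigr)\ \in\ \CC[S_{n+1}].
\]
Tracking cycle types through the normalizations of $\ch_n$, $\ch_{n+1}$, $\ch$ and $\psi$ --- using that multiplying $\sigma\in S_n$ by $(n\,\,n{+}1)$ simply inserts $n{+}1$ into the cycle of $\sigma$ through $n$ --- one checks that $\ch_{n+1}(n\beta)=\psi(h)$ and $\ch_{n+1}\bigl(n\beta\,(n\,\,n{+}1)\bigr)=th$, hence $\ch_{n+1}(\gamma)=\psi(h)-th$. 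By Lemma~\ref{lem:pointed-trace} it then suffices to show $\tr(\gamma\,\epsilon_{\mu,j})\ge 0$ for all $\mu\vdash n+1$ and $j\in\mu$. Since $\beta\in\CC[S_n]$ while $\epsilon_{\mu,j}$ lies in $Z_{n+1,n+1}$, the centralizer of $\CC[S_n]$, the elements $\beta$ and $\epsilon_{\mu,j}$ commute; both are positive semidefinite, so $\beta\,\epsilon_{\mu,j}$ is positive semidefinite. As $1-(n\,\,n{+}1)$ is positive semidefinite (eigenvalues $0$ and $2$),
\[
\tr(\gamma\,\epsilon_{\mu,j}) \;=\; n\,\tr\!\bigl((\beta\,\epsilon_{\mu,j})\,(1-(n\,\,n{+}1))\bigr)\;\ge\;0,
\]
using $\tr(XY)=\tr(X^{1/2}YX^{1/2})\ge 0$ for positive semidefinite $X,Y$. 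This would prove the lemma, and with it the theorem.

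\textbf{Anticipated difficulty.} The reduction is bookkeeping and the required representation theory (Lemmas~\ref{lem:nice-centralizer} and~\ref{lem:pointed-trace}) is already available, so the only real decision is the form of $\gamma$. It is tempting to try to make $\gamma$ itself positive semidefinite, but $\beta$ need not commute with the transposition $(n\,\,n{+}1)$, so $\gamma$ is merely a product of two positive semidefinite operators (nonnegative spectrum, but not Hermitian). The remedy is to keep $\epsilon_{\mu,j}$ attached to the $\beta$-factor --- legitimate precisely because $\epsilon_{\mu,j}$ centralizes $\CC[S_n]\ni\beta$ --- and then invoke the standard positivity of $\tr(XY)$. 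One also has to keep straight the scalar $n$ that comes from the differing normalizations of $\ch_n$, $\ch_{n+1}$ and $\psi$.
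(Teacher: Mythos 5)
Your proposal is correct, but it takes a genuinely different route from the paper. The paper handles the whole path in one shot: it picks $\beta \in Z_{[n,n+m-1],n}$ with $\ch_n(\beta)=X_{G,n}$, splits the path's edge operators by parity into $A=\prod_{i \text{ odd}}(1-(i\,\,i{+}1))$ and $B=\prod_{i \text{ even}}(1-(i\,\,i{+}1))$ (each positive semidefinite because its factors commute), observes that $\ch_1(A\beta B)$ is proportional to $X_{G\vee P,1}$, and then checks $\tr(\epsilon_{\lambda,i}A\beta B)\ge 0$ by a case analysis on the parity of $n$, using which of $A,B,\beta,\epsilon_{\lambda,i}$ commute. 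You instead peel off one pendant edge at a time: deletion--contraction at the distinguished leaf gives $X_{G\vee P,1}=\psi(h)-th$ with $h=X_{G\vee P',2}$, and induction reduces everything to your pendant-edge lemma, whose proof (lifting $\beta$ with $\ch_N(\beta)=h$ to $\gamma=N\beta(1-(N\,\,N{+}1))$, checking $\ch_{N+1}(\gamma)=\psi(h)-th$ by the cycle-insertion computation, and using that $\epsilon_{\mu,j}$ centralizes $\CC[S_N]\ni\beta$ so that $\tr(\gamma\epsilon_{\mu,j})=N\tr((\beta\epsilon_{\mu,j})(1-(N\,\,N{+}1)))\ge 0$) is sound; the normalization constant $N$ relating $\ch_N$, $\ch_{N+1}$, $\psi$ is also right. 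The two arguments share the same representation-theoretic engine (Lemmas~\ref{lem:nice-centralizer} and~\ref{lem:pointed-trace}, plus nonnegativity of the trace of a product of positive semidefinite operators), but your key lemma is more modular and slightly more general: it says that for \emph{any} homogeneous pointed Schur positive $h$, not just one coming from a graph, $\psi(h)-th$ is again pointed Schur positive, i.e.\ attaching a pendant edge at the root preserves pointed Schur positivity; this single statement replaces the paper's even/odd coloring trick and its parity case split, at the cost of an induction and the deletion--contraction bookkeeping (including the observation that the deleted graph has an isolated root, so its pointed function is $\psi$ of the contracted one). The paper's proof, by contrast, is non-inductive and exhibits the whole operator $A\beta B$ at once, in the spirit of its Theorem~\ref{thm:edge-bipartite-hypergraphs}.
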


\begin{proof}
Let $\beta \in Z_{[n,n+m-1],n}$ be such that $\ch_n(\beta) = X_{G,n}$. Define
\begin{equation*}
    A = \prod_{\substack{i=1\\ \text{$i$ odd}}}^{n-1} (1 - (i\,\,i{+}1)) \quad \text{and} \quad B = \prod_{\substack{i=1\\ \text{$i$ even}}}^{n-1} (1 - (i\,\,i{+}1)).
    \end{equation*}
Observe that $\frac{(n+m-1)!}{n!}\ch_1(A \beta B) = X_{G \vee P, 1}$ (cf. Lemma~\ref{lem:tree-cycle-type}). By Lemma~\ref{lem:pointed-trace}, it suffices to see that $\tr(\epsilon_{\lambda,i} A\beta B) \geq 0$ for each idempotent $\epsilon_{\lambda,i} \in Z_{n+m-1,1}$.

Both $A$ and $B$ are positive semidefinite, being the product of commuting positive semidefinite operators. Both $\epsilon_{\lambda,i}$ and $\beta$ are also positive semidefinite: they are Hermitian by Lemma~\ref{lem:nice-centralizer}, $\epsilon_{\lambda,i}$ is an idempotent, and $\beta$ has nonnegative eigenvalues because $\ch_n(\beta) = X_{G,n}$ is pointed Schur positive (applying Lemma~\ref{lem:pointed-trace}). Now consider two cases:
\begin{itemize}
    \item $n$ odd: Here $A$ is in $\CC[S_{n-1}]$, so commutes with $\beta \in \CC[S_{[n,n+m-1]}]$. Similarly, $B \in \CC[S_{[2,n]}]$ while $\epsilon_{\lambda,i}$ is in $Z_{n+m-1,1}$, the centralizer of $S_{[2,n+m-1]}$. Therefore both $A\beta$ and $B\epsilon_{\lambda,i}$ are positive semidefinite, so $(A\beta)(B\epsilon_{\lambda,i})$ has nonnegative trace.

    \item $n$ even: Here $\beta$ and $B$ are both in $\CC[S_{[2,n]}]$, so commute with $\epsilon_{\lambda,i}$. In fact, $B$ is in $\CC[S_{[2,n-1]}]$, so it also commutes with $\beta$. Therefore $\beta B \epsilon_{\lambda,i}$ is positive semidefinite, so $A(\beta B \epsilon_{\lambda,i})$ has nonnegative trace.
\end{itemize}
\end{proof}

Taking $G$ to be a single vertex shows that $X_{P_n,n}$ is pointed Schur positive. As another example, $X_{C,v}$ is pointed Schur positive where $C$ is a cycle graph (Theorem~\ref{thm:cycle-pointed-chrom}), so any graph
\begin{center}
    \begin{tikzpicture}[scale=0.52, every node/.style = {circle, fill, inner sep=0, minimum size=3pt}]
        \draw (-1,0) node {} to[bend left] (-0.309, 0.951) node {} to[bend left] (0.809, 0.588) node {} to[bend left] (0.809, -0.588) node {};
        \draw[loosely dotted]  (0.809, -0.588) to[bend left] (-0.309, -0.951) node {};
        \draw (-0.309, -0.951) to[bend left] (-1,0);
        \draw (-1,0) to (-2,0) node {};
        \draw[loosely dotted] (-2,0) to (-3.2,0);
        \draw (-3.2,0) node {} to (-4.2,0) node {};
    \end{tikzpicture}
\end{center}
is pointed Schur positive at the leaf, and any graph
\begin{center}
    \begin{tikzpicture}[scale=0.52, every node/.style = {circle, fill, inner sep=0, minimum size=3pt}]
        \draw (-1,0) node {} to[bend left] (-0.309, 0.951) node {} to[bend left] (0.809, 0.588) node {} to[bend left] (0.809, -0.588) node {};
        \draw[loosely dotted]  (0.809, -0.588) to[bend left] (-0.309, -0.951) node {};
        \draw (-0.309, -0.951) to[bend left] (-1,0);
        \draw (-1,0) to (-2,0) node {};
        \draw[loosely dotted] (-2,0) to (-3.2,0);
        \draw (-3.2,0) node {} to (-4.2,0) node {};
        \draw (-4.2,0) to[bend right] (-4.891, 0.951) node {} to[bend right] (-6.009, 0.588) node {} to[bend right] (-6.009, -0.588) node {};
        \draw[loosely dotted] (-6.009, -0.588) to[bend right] (-4.891, -0.951) node {};
        \draw (-4.891, -0.951) to[bend right] (-4.2,0);
    \end{tikzpicture}
\end{center}
is Schur positive.

The commutativity and semisimplicity of $Z_{n,v}$ arise from the fact that the branching rule for restricting irreducible representations from $S_n$ to $S_{n-1}$ is multiplicity free, and is key in Vershik and Okounkov's approach to $S_n$ representation theory \cite{vershik-okounkov}. When restricting to $S_{n-k}$ with $k > 1$, multiplicities can appear, and the centralizer of $\CC[S_{n-k}]$ is no longer semisimple. On the other hand, irreducible restrictions restricted to $S_{n-2} \times S_2$ \emph{are} multiplicity-free; indeed, the coefficient of $S^{\lambda} \times S^{\mu}$ in $\operatorname{Res}^{S_n}_{S_{n-k} \times S_k} S^{\nu}$ is the Littlewood-Richardson coefficient $c^{\nu}_{\lambda\mu}$, which is $0$ or $1$ when $|\mu| \leq 2$. To get an  analogue of a pointed chromatic symmetric function in this setting, we would choose two distinguished vertices $v, w$ and record two partitions (for each subset of edges): the unordered list of sizes of the connected components not containing $v$ or $w$, and the unordered list of the sizes of the $1$ or $2$ components containing $v$ and $w$. However, it is unclear whether the product of two of these symmetric functions would have a nice interpretation in the same way that the product of pointed chromatic symmetric functions relates to wedge sum.

\section{Pointed chromatic symmetric functions of particular graphs}
\label{sec:pointed-chrom-sym-path}

In this section we give formulas for pointed chromatic symmetric functions of complete graphs, cycle graphs, and paths (with the distinguished vertex being a leaf in the last case). These formulas will be positive in a pointed analogue of the basis of elementary symmetric functions $\{e_{\lambda}\}$. Pointed elementary symmetric functions will themselves be pointed Schur positive, so this is a stronger property than pointed Schur positivity.

\begin{defn}
Given a composition $\alpha$ and a part $i \in \alpha$, define the \emph{pointed elementary symmetric function} $e_{\alpha,i}$ by first defining $e_{i,i} \eqdef s_{(1^i),1}$, and then
\begin{equation*}
e_{\alpha,i} \eqdef e_{i,i} \prod_{j \in \alpha \setminus i} e_j.
\end{equation*}
\end{defn}
We will take $e_{0,0}$ to be $0$. By $\alpha \setminus i$ we mean $\alpha$ with, say, the first instance of $i$ deleted---but we only use this notation in cases where the order of the parts of $\alpha \setminus i$ is not relevant. Likewise, we use $\alpha \cup i$ to mean $\alpha$ with a part $i$ added, and again the position in which $i$ is added will never be relevant.

\begin{lem} \label{lem:pointed-schur-times-schur} Let $\lambda, \mu$ be partitions and $j \in \mu$. Then $s_{\lambda} s_{\mu,j}$ is pointed Schur positive. \end{lem}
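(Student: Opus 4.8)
The plan is to imitate the proof of Lemma~\ref{lem:pointed-schur-product}: realize $s_\lambda s_{\mu,j}$ as a positive scalar multiple of $\ch_v(\alpha)$ for a suitable $\alpha \in \CC[S_N]$, and then apply Lemma~\ref{lem:pointed-trace} after checking that $\tr(\alpha \epsilon_{\nu,i}) \ge 0$ for every primitive idempotent $\epsilon_{\nu,i}$ of $Z_{N,v}$. The point that makes this work — and makes the statement stronger than Lemma~\ref{lem:pointed-schur-product} — is that an honest Schur function $s_\lambda$ is $\ch_v$ of a \emph{central} element of a group algebra, unlike a general pointed Schur function.

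Concretely, I would set $k = |\lambda|$, $m = |\mu|$, $N = k+m$, take the distinguished vertex to be $1$, and split $[N] = [1,m] \sqcup [m{+}1, N]$ so that $1$ lies in the first block. Let $\epsilon_{\mu,j} \in Z_{m,1} \subseteq \CC[S_{[1,m]}]$ be the idempotent defining $s_{\mu,j}$, let $\eta_\lambda = \sum_{\tau \in S_{[m+1,N]}} \chi^{\lambda}(\tau)\,\tau = \tfrac{k!}{\chi^{\lambda}(1)}\epsilon_\lambda \in \CC[S_{[m+1,N]}]$ (so $\ch(\eta_\lambda) = s_\lambda$), and put $\alpha = \tfrac{m!}{\chi^{\mu}(1)}\,\epsilon_{\mu,j}\,\eta_\lambda \in \CC[S_N]$. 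First I would record the factorization: for $\pi \in S_{[1,m]}$ and $\rho \in S_{[m+1,N]}$, the cycle of $\pi\rho$ through $1$ stays inside the first block, so $\cyc_1^+(\pi\rho) = \cyc_1^+(\pi)$ and $\cyc_1^-(\pi\rho) = \cyc_1^-(\pi) \cup \cyc(\rho)$; hence $\ch_1(\pi\rho) = \tfrac{(m-1)!\,k!}{(N-1)!}\,\ch_1(\pi)\,\ch(\rho)$, where on the right $\ch_1$ is taken in $\CC[S_{[1,m]}]$ and $\ch$ in $\CC[S_{[m+1,N]}]$. Summing against the coefficients of $\epsilon_{\mu,j}$ and $\eta_\lambda$ and using $s_{\mu,j} = \tfrac{m!}{\chi^{\mu}(1)}\ch_1(\epsilon_{\mu,j})$ yields $\ch_1(\alpha) = \tfrac{(m-1)!\,k!}{(N-1)!}\, s_\lambda s_{\mu,j}$, a positive multiple of the target.

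It then remains to see that $\alpha$ is positive semidefinite and that this forces all pointed Schur coefficients of $\ch_1(\alpha)$ to be nonnegative. The factor $\epsilon_{\mu,j}$ is a Hermitian idempotent (Hermitian by Lemma~\ref{lem:nice-centralizer}, since its coefficients are character values and hence real; Hermitian as an element means Hermitian as an operator by Proposition~\ref{prop:hermitian}), and $\eta_\lambda$ is a positive scalar multiple of the Hermitian central idempotent $\epsilon_\lambda$ of $\CC[S_{[m+1,N]}]$; the two commute because they are supported on disjoint blocks, and a product of commuting positive semidefinite operators is positive semidefinite. For any primitive idempotent $\epsilon_{\nu,i}$ of $Z_{N,1}$ the same reasoning shows it is a Hermitian idempotent, hence positive semidefinite, so $\tr(\alpha \epsilon_{\nu,i}) \ge 0$ because the trace of a product of two positive semidefinite operators is nonnegative. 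By Lemma~\ref{lem:pointed-trace} every pointed Schur coefficient of $\ch_1(\alpha)$ is nonnegative, and since $\ch_1(\alpha)$ is a positive multiple of $s_\lambda s_{\mu,j}$, the lemma follows. I do not expect a genuine obstacle; the only step needing care is the factorization identity for $\ch_1$ on products supported on disjoint blocks (in particular tracking the $t$-exponent), and the only conceptual input is the observation that $s_\lambda = \ch_v$ of a central element, which is exactly what allows one to conclude $\alpha$ is positive semidefinite outright, without the extra central idempotent needed to ``symmetrize'' in Lemma~\ref{lem:pointed-schur-product}.
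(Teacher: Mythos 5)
Your proof is correct and follows essentially the same route as the paper: both reduce to Lemma~\ref{lem:pointed-trace} and conclude by pairing a positive semidefinite element built from $\epsilon_\lambda$ and $\epsilon_{\mu,j}$ (Hermitian idempotents via Lemma~\ref{lem:nice-centralizer} and Proposition~\ref{prop:hermitian}, commuting because of disjoint supports) against the primitive idempotents of the pointed centralizer. The only packaging difference is that the paper pushes $s_\lambda s_{\mu,j}$ through the $\circ$-operation and Proposition~\ref{prop:pointed-frob-multiplicative} and then cites the trace computation from the proof of Lemma~\ref{lem:pointed-schur-product}, whereas you prove the disjoint-block factorization of $\ch_1$ directly and observe that the plain (unsymmetrized) product is itself positive semidefinite --- the same mechanism, just slightly more self-contained.
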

    \begin{proof}
        Say $\lambda \vdash n{-}1$ and $\mu \vdash m$. Viewing $\epsilon_{\lambda} \in \CC[S_{n-1}]$ as a member of $\CC[S_n]$, we have $\ch_n(\epsilon_{\lambda}) = \ch(\epsilon_{\lambda}) = s_{\lambda}$. By Proposition~\ref{prop:pointed-frob-multiplicative}, $s_\lambda s_{\mu,j} = \ch_n(\epsilon_{\lambda} \circ \epsilon_{\mu,j})$. Up to multiplication by positive scalars, the pointed Schur coefficients of $\ch_n(\epsilon_{\lambda} \circ \epsilon_{\mu,j})$ are the numbers $\tr(\epsilon_{\nu,k}(\epsilon_{\lambda} \circ \epsilon_{\mu,j}))$ for $\nu \vdash n+m-1$, as per Lemma~\ref{lem:pointed-trace}. But we already showed that these traces are nonnegative in the proof of Lemma~\ref{lem:pointed-schur-product}.
    \end{proof}

\begin{cor} \label{lem:pointed-e-schur-positive} $e_{\lambda,i}$ is pointed Schur positive. \end{cor}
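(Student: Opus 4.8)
The plan is to factor $e_{\lambda,i}$ as a product of an (ordinary) Schur positive symmetric function with a single pointed Schur function, and then invoke Lemma~\ref{lem:pointed-schur-times-schur}. By the definition of the pointed elementary symmetric functions, $e_{\lambda,i} = e_{i,i}\prod_{j \in \lambda \setminus i} e_j = s_{(1^i),1}\cdot e_{\lambda \setminus i}$, where $e_{\nu} \eqdef \prod_j e_{\nu_j}$ is the usual product of elementary symmetric functions and we have used $e_{i,i} = s_{(1^i),1}$. (If $i = 0$ then $e_{\lambda,i} = e_{0,0}\cdot(\cdots) = 0$, which is vacuously pointed Schur positive, so we may assume $i \geq 1$.)

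Next I would recall the standard fact that $e_{\nu}$ is Schur positive for any composition $\nu$: since $e_k = s_{(1^k)}$, this is the Schur positivity of a product of Schur functions (Littlewood--Richardson, or iterated Pieri). Writing $e_{\lambda \setminus i} = \sum_{\mu} c_{\mu} s_{\mu}$ with all $c_{\mu} \geq 0$, bilinearity of multiplication in $\Lambda[t]$ gives $e_{\lambda,i} = \sum_{\mu} c_{\mu}\, s_{\mu}\, s_{(1^i),1}$. By Lemma~\ref{lem:pointed-schur-times-schur} (applied with its ``$\mu$'' taken to be $(1^i)$ and its ``$j$'' taken to be $1$), each term $s_{\mu} s_{(1^i),1}$ is pointed Schur positive, and a nonnegative linear combination of pointed Schur positive elements of $\Lambda[t]$ is again pointed Schur positive. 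Hence $e_{\lambda,i}$ is pointed Schur positive.

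There is essentially no real obstacle: all of the substance is already contained in Lemma~\ref{lem:pointed-schur-times-schur} (which in turn rests on the positive-semidefiniteness argument from the proof of Lemma~\ref{lem:pointed-schur-product}). The only points that need to be spelled out are the bookkeeping identity $e_{i,i} = s_{(1^i),1}$ (true by definition), the Schur positivity of products of $e_j$'s, and the trivial observation that multiplying a Schur positive symmetric function by a fixed pointed Schur function yields something pointed Schur positive.
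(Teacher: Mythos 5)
Your proof is correct and is exactly the argument the paper intends: the corollary is stated right after Lemma~\ref{lem:pointed-schur-times-schur} precisely because $e_{\lambda,i} = s_{(1^i),1}\, e_{\lambda\setminus i}$, the Schur positivity of $e_{\lambda\setminus i}$, and that lemma give the result immediately. Nothing further is needed.
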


    Recall from Example-Lemma \ref{exlem:pointed-e} that $e_{n,n} = \sum_{k=1}^{n} (-t)^{k-1} e_{n-k}$. Equivalently,
    \begin{equation} \label{eq:pointed-e} 
        \sum_{n=1}^{\infty} e_{n,n} z^n = \frac{z}{1+tz} \sum_{i=0}^{\infty} e_n z^n.
    \end{equation}

\begin{prop} \label{prop:e-times-t} $\displaystyle te_{\lambda,i} = e_{\lambda} - e_{\lambda^{\uparrow i}, i+1} = e_{\lambda \cup 1, 1} - e_{\lambda^{\uparrow i}, i+1}$ for any partition $\lambda$ and $i \in \lambda$, where $\lambda^{\uparrow i}$ is the partition obtained from $\lambda$ by replacing one copy of the part $i$ with $i+1$. 
\end{prop}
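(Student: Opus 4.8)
The plan is to peel off a common factor from all three terms and reduce the identity to the rank-one case, which then follows immediately from the generating-function identity \eqref{eq:pointed-e} already recorded (equivalently, from the explicit formula in Example-Lemma~\ref{exlem:pointed-e}).

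First I would record the relevant factorizations. Set $R = \prod_{j \in \lambda \setminus i} e_j$. By definition $e_{\lambda,i} = e_{i,i}\,R$ and $e_\lambda = e_i\,R$ directly. For the term $e_{\lambda^{\uparrow i},i+1}$, observe that as a multiset $\lambda^{\uparrow i} = (\lambda \setminus i) \cup (i+1)$, so $\lambda^{\uparrow i} \setminus (i+1) = \lambda \setminus i$ and hence $e_{\lambda^{\uparrow i},i+1} = e_{i+1,i+1}\,R$. Similarly $e_{\lambda \cup 1,1} = e_{1,1}\, e_\lambda$. Since all $e_j$ commute, the order of the factors is irrelevant throughout. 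Thus it suffices to prove the two scalar identities $t e_{i,i} = e_i - e_{i+1,i+1}$ (for $i \geq 1$) and $e_{1,1} = 1$; multiplying the first by $R$ gives $t e_{\lambda,i} = (e_i - e_{i+1,i+1})R = e_\lambda - e_{\lambda^{\uparrow i},i+1}$, and the second gives $e_\lambda = e_{1,1} e_\lambda = e_{\lambda \cup 1,1}$.

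Next I would derive both scalar identities at once from \eqref{eq:pointed-e}. Multiplying that identity through by $1 + tz$ yields $\sum_{n \geq 1} e_{n,n} z^n + t \sum_{n \geq 1} e_{n,n} z^{n+1} = \sum_{n \geq 0} e_n z^{n+1}$, and comparing coefficients of $z^{n+1}$ gives $e_{n+1,n+1} + t e_{n,n} = e_n$ for all $n \geq 0$, using the convention $e_{0,0} = 0$. The case $n = 0$ reads $e_{1,1} = e_0 = 1$, and the cases $n \geq 1$ give exactly $t e_{i,i} = e_i - e_{i+1,i+1}$. (Alternatively, this coefficient comparison is just the reindexing $k \mapsto k-1$ applied to $e_{n,n} = \sum_{k=1}^n (-t)^{k-1} e_{n-k}$.) This completes the argument.

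I do not expect a genuine obstacle here; the whole proof should be about half a page. The only points needing care are treating $\lambda^{\uparrow i}$ and $\lambda \cup 1$ as multisets so that the (commuting) product of $e_j$'s is unambiguous, and keeping track of the boundary conventions $e_{0,0} = 0$ and $e_{1,1} = 1$ when invoking \eqref{eq:pointed-e} at small degrees.
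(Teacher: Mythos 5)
Your proof is correct and follows essentially the same route as the paper: factor out $\prod_{j \in \lambda \setminus i} e_j$, reduce to the scalar identity $te_{i,i} = e_i - e_{i+1,i+1}$, and verify it from the generating function \eqref{eq:pointed-e}. The only difference is cosmetic: you make explicit the check $e_{1,1} = 1$ needed for the equality $e_{\lambda} = e_{\lambda \cup 1,1}$, which the paper leaves implicit.
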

\begin{proof}
We need to see that $te_{i,i} = e_i - e_{i+1,i+1}$. By \eqref{eq:pointed-e}, this is equivalent to the identity
\begin{equation*}
    t\cdot \frac{z}{1+tz} \sum_{i=0}^{\infty} e_i z^i = \left(1 - \frac{1}{1+tz}\right) \sum_{i=0}^{\infty} e_i z^i.
\end{equation*}
\end{proof}

\begin{lem} \label{lem:psi-e}
    $\psi(e_{\lambda,i}) = i e_{\lambda}$.
\end{lem}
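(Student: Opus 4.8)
The plan is to reduce to the one-part case using that $\psi$ is $\Lambda$-linear, and then to recognize the resulting identity as one of Newton's identities. First I would record that $\psi$ is $\Lambda$-linear: on a basis element $p_{\nu}\cdot(p_{\lambda}t^{i-1}) = p_{\nu\cup\lambda}t^{i-1}$ one has $\psi(p_{\nu}p_{\lambda}t^{i-1}) = p_{\nu\cup\lambda\cup i} = p_{\nu}\cdot p_{\lambda\cup i} = p_{\nu}\,\psi(p_{\lambda}t^{i-1})$, so $\psi(gf) = g\,\psi(f)$ for all $g\in\Lambda$ and $f\in\Lambda[t]$. Since $e_{\lambda,i} = e_{i,i}\prod_{j\in\lambda\setminus i}e_j$ with $\prod_{j\in\lambda\setminus i}e_j = e_{\lambda\setminus i}\in\Lambda$, it then suffices to prove the single-part case $\psi(e_{i,i}) = i\,e_i$: the general statement follows since $\psi(e_{\lambda,i}) = e_{\lambda\setminus i}\,\psi(e_{i,i}) = e_{\lambda\setminus i}\cdot i\,e_i = i\,e_{\lambda}$.

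For the single-part case I would expand $e_{i,i}$ via Example-Lemma~\ref{exlem:pointed-e}, giving $e_{i,i} = s_{(1^i),1} = \sum_{k=1}^{i}(-t)^{k-1}e_{i-k}$. Applying $\psi$ term by term and using $\psi(t^{k-1}) = \psi(p_{\emptyset}t^{k-1}) = p_k$ (valid as $k\ge 1$), I get
\[
\psi(e_{i,i}) \;=\; \sum_{k=1}^{i}(-1)^{k-1}e_{i-k}\,\psi(t^{k-1}) \;=\; \sum_{k=1}^{i}(-1)^{k-1}p_k\,e_{i-k},
\]
and the proof is finished by invoking the Newton identity $i\,e_i = \sum_{k=1}^{i}(-1)^{k-1}p_k\,e_{i-k}$ (e.g.\ \cite{macdonald}), which is exactly this right-hand side.

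I do not expect a real obstacle. The only things needing care are the verification that $\psi$ is $\Lambda$-linear from its defining formula (routine, since $\{p_{\lambda}t^{j}\}$ is a $\CC$-basis of $\Lambda[t]$ and $\mu\mapsto\lambda\cup\mu$ is compatible with multiplication of power sums) and matching the alternating sign $(-t)^{k-1}$ coming out of Example-Lemma~\ref{exlem:pointed-e} with the sign convention in the form of Newton's identity used. One might hope to bootstrap instead from Proposition~\ref{prop:e-times-t}, but $\psi(tf)$ is not determined by $\psi(f)$, so that relation alone does not pin down $\psi(e_{i,i})$; the Newton-identity argument above is the cleanest route.
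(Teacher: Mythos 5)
Your proof is correct and follows essentially the same route as the paper: reduce to the one-part case by $\Lambda$-linearity of $\psi$, expand $e_{i,i}=s_{(1^i),1}$ via Example-Lemma~\ref{exlem:pointed-e}, and conclude with Newton's identity. The only difference is that you spell out the (routine) verification of $\Lambda$-linearity, which the paper takes for granted.
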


\begin{proof} Since $\psi$ is a map of $\Lambda$-modules, it suffices to check that $\psi(e_{n,n}) = n e_n$. Indeed,
    \begin{equation*}
        \psi(e_{n,n}) = \sum_{k=1}^n (-1)^{k-1} p_k e_{n-k} \quad \text{(by Example-Lemma~\ref{exlem:pointed-e})}
    \end{equation*}
    and Newton's identity says that this equals $n e_n$.
\end{proof}
In particular, if $X_{G,v}$ is pointed $e$-positive, then $X_G = \psi(X_{G,v})$ is $e$-positive.

\subsection{Complete graphs}
Let $K_n$ be the complete graph on $[n]$, and let $K(n,k)$ be $K_n$ with the edges $(1,n), (2,n), \ldots, (k,n)$ removed. Thus, $K(n,0) = K_n$, while $K(n,n-1)$ is $K_{n-1}$ plus the isolated vertex $n$.
\begin{thm} $X_{K_n, n} = (n-1)!e_{n,n}$. \end{thm}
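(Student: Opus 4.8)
The plan is to prove, by induction on $n$, the slightly more general family of identities
\[
X_{K(n,k),n} \;=\; (n-1)!\,e_{n-1} \;-\; (n-1-k)\,t\,X_{K_{n-1},n-1}, \qquad 0 \le k \le n-1,
\]
whose $k=0$ case, combined with the inductive hypothesis applied to $K_{n-1}$, will give the theorem. The base case $n=1$ is immediate: $X_{K_1,1}=1$, while $e_{1,1}=s_{(1^1),1}=e_0=1$ by Example-Lemma~\ref{exlem:pointed-e}, so $X_{K_1,1}=0!\,e_{1,1}$.

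For the inductive step ($n\ge 2$) I would peel off the edges at the distinguished vertex $n$ one at a time. When $k\le n-2$ the edge $e=(n-1,n)$ is present in $K(n,k)$ and is incident to $n$, so Lemma~\ref{lem:del-con} gives $X_{K(n,k),n}=X_{K(n,k)\setminus e,\,n}-t\,X_{K(n,k)/e,\,n}$. Deleting $e$ yields $K_n$ with the edges $(1,n),\dots,(k,n),(n-1,n)$ removed, which is isomorphic as a \emph{pointed} graph to $K(n,k+1)$ via a relabelling of $[n-1]$ fixing $n$; hence $X_{K(n,k)\setminus e,\,n}=X_{K(n,k+1),n}$. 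Contracting $e$ identifies $n-1$ and $n$ into a single distinguished vertex, and the underlying simple graph is $K_{n-1}$; invoking the multigraph-invariance of $X_{-,v}$ noted in the proof of Lemma~\ref{lem:del-con}, together with the vertex-transitivity of $K_{n-1}$, gives $X_{K(n,k)/e,\,n}=X_{K_{n-1},n-1}$. Thus $X_{K(n,k),n}=X_{K(n,k+1),n}-t\,X_{K_{n-1},n-1}$ for $0\le k\le n-2$. For the remaining value $k=n-1$, the graph $K(n,n-1)$ is $K_{n-1}$ plus an isolated vertex $n$, so $X_{K(n,n-1),n}=X_{K_{n-1}}$; applying $\psi$ to the inductive hypothesis $X_{K_{n-1},n-1}=(n-2)!\,e_{n-1,n-1}$ and using Lemma~\ref{lem:psi-e} gives $X_{K_{n-1}}=(n-2)!\,(n-1)\,e_{n-1}=(n-1)!\,e_{n-1}$ (this is also just the classical value of $X_{K_{n-1}}$). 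Telescoping the recurrence from $k$ up to $n-1$ then produces the displayed family.

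It remains to set $k=0$ and simplify. The family, together with the inductive hypothesis, gives $X_{K_n,n}=(n-1)!\,e_{n-1}-(n-1)\,t\,X_{K_{n-1},n-1}=(n-1)!\bigl(e_{n-1}-t\,e_{n-1,n-1}\bigr)$. Finally, Proposition~\ref{prop:e-times-t} applied to the one-part partition $\lambda=(n-1)$ with $i=n-1$ (so $\lambda^{\uparrow i}=(n)$) gives $t\,e_{n-1,n-1}=e_{n-1}-e_{n,n}$, whence $e_{n-1}-t\,e_{n-1,n-1}=e_{n,n}$ and $X_{K_n,n}=(n-1)!\,e_{n,n}$, completing the induction.

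The main obstacle is the deletion-contraction bookkeeping: one must check that deleting $(n-1,n)$ really lands back inside the family $\{K(n,\bullet)\}$ up to pointed isomorphism, and that contracting it---after passing from the resulting multigraph to its underlying simple graph---returns exactly $K_{n-1}$ pointed at an (arbitrary) vertex. Everything after that is routine manipulation of pointed elementary symmetric functions through Proposition~\ref{prop:e-times-t}.
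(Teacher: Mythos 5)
Your proof is correct and follows essentially the same route as the paper: deletion--contraction at the distinguished vertex, telescoping to $K(n,n-1) = K_{n-1}$ plus an isolated vertex, induction, and Proposition~\ref{prop:e-times-t}. The only cosmetic difference is that you delete the edge $(n-1,n)$ and appeal to a pointed isomorphism, whereas the paper deletes $(k+1,n)$ so that the deleted graph is literally $K(n,k+1)$.
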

\begin{proof}    
    \label{thm:complete-graph} Letting $e$ be the edge $(k+1,n)$, we have $K(n,k) \setminus e = K(n,k+1)$ and $K(n,k)/e = K_{n-1}$. By the deletion-contraction recurrence (Lemma~\ref{lem:del-con}),
    \begin{align*}
        X_{K_n, n} &= X_{K(n,1),n} - t X_{K_{n-1},n-1} = X_{K(n,2),n} - 2t X_{K_{n-1},n-1} = \cdots \\
        &= X_{K(n,n-1),n} - (n-1)tX_{K_{n-1},n-1}.
    \end{align*}
    We have $X_{K(n,n-1),n} = X_{K_{n-1}}$, which is easily verified to be $(n-1)!e_{n-1}$, and by induction $X_{K_{n-1},n-1} = (n-2)!e_{n-1,n-1}$. Thus
    \begin{equation*}
        X_{K_n,n} = (n-1)!e_{n-1} - (n-1)!te_{n-1,n-1},
    \end{equation*}
    which is $(n-1)!e_{n,n}$ by Proposition~\ref{prop:e-times-t}.
\end{proof}

\subsection{Paths} 
Let
\begin{equation*}
F(z) = \sum_{n=0}^{\infty} X_{P_n} z^n.
\end{equation*}
Stanley \cite[Proposition 5.3]{stanley-chromatic} showed that
\begin{equation*}
F(z) = \frac{\sum_{i=0}^{\infty} e_i z^i}{1 - \sum_{i=1}^{\infty} (i-1) e_i z^i}.
\end{equation*}
Write $m_i(\lambda)$ for the multiplicity of the part $i$ in $\lambda$, and $m(\lambda)$ for the list of multiplicities. The number of permutations of the list $\lambda$ is then the multinomial coefficient ${\ell(\lambda) \choose m(\lambda)}$. 

\begin{thm} \label{thm:path-pointed-chrom} $\sum_{n=0}^{\infty} X_{P_n,n}z^n = \frac{z}{1+tz} F(z)$, and
\begin{equation} \label{eq:path-pointed-e}
X_{P_n,n} = \sum_{\substack{\lambda \\ i \in \lambda}} {\ell(\lambda \setminus i) \choose m(\lambda \setminus i)} \left[\prod_{j \in \lambda \setminus i} j{-}1\right]  e_{\lambda,i}.
\end{equation}
\end{thm}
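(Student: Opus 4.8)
The plan is to prove the generating-function identity first, by deletion--contraction, and then extract the coefficient formula by expanding the resulting rational function and matching it against the definition of $e_{\lambda,i}$.

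\medskip

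First I would set up a recurrence. The edge $e = (n{-}1\,,n)$ of $P_n$ is incident to the leaf $n$, so Lemma~\ref{lem:del-con} gives $X_{P_n,n} = X_{P_n\setminus e,\,n} - t\,X_{P_n/e,\,n}$ for $n \geq 2$. Now $P_n\setminus e$ is $P_{n-1}$ (on $\{1,\dots,n{-}1\}$) together with the isolated vertex $n$; applying the disjoint-union multiplicativity of $X_{-,-}$ with $G_1 = \{n\}$ and noting that only $S=\emptyset$ contributes to Definition~\ref{defn:pointed-chrom-sym} for the single-vertex graph (so $X_{\{n\},n} = 1$), we get $X_{P_n\setminus e,\,n} = X_{P_{n-1}}$. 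On the other hand $P_n/e \cong P_{n-1}$ with $n$ becoming a leaf, so $X_{P_n/e,\,n} = X_{P_{n-1},\,n-1}$. Hence
\[
X_{P_n,n} = X_{P_{n-1}} - t\,X_{P_{n-1},\,n-1} \qquad (n \geq 2),
\]
with $X_{P_1,1} = 1$ and $X_{P_0,0} = 0$. Writing $A(z) = \sum_{n\geq 0} X_{P_n,n}z^n$, multiplying this recurrence by $z^n$ and summing over $n\geq 2$ yields $A(z) - z = z(F(z)-1) - tz\,A(z)$, i.e.\ $(1+tz)A(z) = zF(z)$, which is the first assertion.

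\medskip

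Next I would substitute Stanley's formula $F(z) = E(z)\big/\big(1 - \sum_{i\geq1}(i{-}1)e_i z^i\big)$, where $E(z) = \sum_{i\geq0}e_i z^i$, together with identity~\eqref{eq:pointed-e} written as $\sum_{m\geq1}e_{m,m}z^m = \frac{z}{1+tz}E(z)$, into $A(z) = \frac{z}{1+tz}F(z)$; this gives $A(z) = \big(\sum_{m\geq1}e_{m,m}z^m\big)\big/\big(1 - \sum_{i\geq1}(i{-}1)e_i z^i\big)$. Expanding the second factor as the geometric series $\sum_{k\geq0}\big(\sum_{i\geq1}(i{-}1)e_i z^i\big)^k$ and grouping the resulting monomials by the partition $\mu$ formed from their exponents (there are $\binom{\ell(\mu)}{m(\mu)}$ ordered $k$-tuples, $k = \ell(\mu)$, producing a given $\mu$) gives
\[
\Big(1 - \sum_{i\geq1}(i{-}1)e_i z^i\Big)^{-1} = \sum_{\mu}\binom{\ell(\mu)}{m(\mu)}\Big(\prod_{j\in\mu}(j{-}1)\Big)\,e_\mu\, z^{|\mu|},
\]
the sum over all partitions $\mu$ (those with a part equal to $1$ contributing $0$). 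Taking the coefficient of $z^n$ in the product then yields $X_{P_n,n} = \sum_{m=1}^{n}\sum_{\mu\vdash n-m}\binom{\ell(\mu)}{m(\mu)}\big(\prod_{j\in\mu}(j{-}1)\big)\,e_{m,m}e_\mu$. Finally, by definition $e_{m,m}e_\mu = e_{m,m}\prod_{j\in\mu}e_j = e_{\mu\cup m,\,m}$, so reindexing by $\lambda = \mu\cup\{m\}$ (equivalently $\mu = \lambda\setminus m$, with $(m,\mu)$ ranging bijectively over pairs $(\lambda,i)$, $\lambda\vdash n$, $i$ a part of $\lambda$) turns this into exactly \eqref{eq:path-pointed-e}.

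\medskip

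The generating-function algebra is routine; the two points that need a little care are the reduction $X_{P_n\setminus e,\,n} = X_{P_{n-1}}$ (which rests on $X_{\{n\},n}=1$ together with disjoint-union multiplicativity) and keeping the multinomial bookkeeping in the geometric-series expansion consistent with the reindexing $(m,\mu)\leftrightarrow(\lambda,i)$. I do not expect any genuine obstacle beyond this bookkeeping.
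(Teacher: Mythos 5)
Your proposal is correct and follows essentially the same route as the paper: deletion–contraction at the edge $(n{-}1,n)$ to get $X_{P_n,n}=X_{P_{n-1}}-tX_{P_{n-1},n-1}$, solving $(1+tz)A(z)=zF(z)$, then substituting Stanley's formula for $F(z)$ together with \eqref{eq:pointed-e} and expanding the denominator to read off the $e_{\lambda,i}$-coefficients. The only cosmetic difference is that the paper records the expansion as a sum over compositions $\alpha\vDash n$ and notes its equivalence to \eqref{eq:path-pointed-e}, whereas you carry out the multinomial bookkeeping explicitly.
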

\begin{proof}
The deletion-contraction recurrence applied to the edge $(n-1,n)$ of $P_n$ gives
\begin{equation} \label{eq:pointed-path-recurrence}
X_{P_n,n} = X_{P_{n-1}} - tX_{P_{n-1},n-1},
\end{equation}
where we set $X_{P_0, 0} \eqdef 0$. Letting $G(z) = \sum_{n=0}^{\infty} X_{P_n,n}z^n$, the recurrence \eqref{eq:pointed-path-recurrence} gives $G(z) = z F(z) - tz G(z)$, i.e.
\begin{equation*}
G(z) = \frac{z}{1 + tz} F(z).
\end{equation*}

Using equation~\eqref{eq:pointed-e}, this formula for $G(z)$ gives
\begin{equation*}
G(z) = \frac{z}{1+tz} \frac{\sum_{i=0}^{\infty} e_i z^i}{1 - \sum_{i=1}^{\infty} (i-1) e_i z^i} = \frac{\sum_{i=1}^{\infty} e_{i,i} z^i}{1 - \sum_{i=1}^{\infty} (i-1) e_i z^i}.
\end{equation*}
From this generating function we see that the coefficient of $e_{\lambda,i}$ in $X_{P_n, n}$ (where $n = |\lambda|$) is 
\begin{equation*}
    X_{P_n,n} = \sum_{\alpha \vDash n} \left[\prod_{j \in \alpha\setminus \alpha_1} j{-}1\right] e_{\alpha,\alpha_1}\
\end{equation*}
where $\alpha$ runs over compositions of $n$. This is equivalent to the formula given in the statement of the theorem.
\end{proof}

It is often the case that $X_{P_n,v}$ is pointed Schur positive when $v$ is not a leaf, but we have no general classification of when this happens. We originally conjectured that $X_{P_n,k}$ is pointed Schur positive if and only if one of $k$ and $n-k+1$ is odd, together with the exceptional cases that $X_{P_4, 2}$ is pointed Schur positive and $X_{P_n, 2}$ is (apparently) not for $n \geq 3$. This holds for $n \leq 17$, but $X_{P_{18},4}$ is not pointed Schur positive.

\subsection{Cycles}
Let $C_n$ be the cycle graph on $[n]$, with edges $(1,2), (2,3),\ldots, (n{-}1,n)$, $(n,1)$. In particular, $C_1$ is a vertex with a loop attached, and $C_2$ is a pair of vertices with two edges between them. Definition~\ref{defn:pointed-chrom-sym} still makes sense when $G$ has loops or multiple edges, and gives $X_{C_1, 1} = 0$ and $X_{C_2, 2} = p_1 - t$. We define $X_{C_0,0} \eqdef 0$.

\begin{thm} \label{thm:cycle-pointed-chrom}
    \begin{equation*}
        \sum_{n=1}^{\infty} X_{C_n,n} z^n  = \frac{1}{1+tz}\left(\frac{z}{1+tz}F(z) - z\right),
    \end{equation*}
    where $F(z)$ is defined as above. Also,
    \begin{equation} \label{eq:cycle-pointed-e}
        X_{C_n,n} = \sum_{\substack{\lambda \\ i \in \lambda}} {\ell(\lambda \setminus i) \choose m(\lambda \setminus i)} \left[\prod_{j \in \lambda} j{-}1\right]  e_{\lambda,i}.
        \end{equation}
\end{thm}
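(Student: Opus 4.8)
The plan is to adapt the proof of Theorem~\ref{thm:path-pointed-chrom} almost verbatim: first produce a deletion--contraction recurrence tying $X_{C_n,n}$ to $X_{P_n,n}$ and $X_{C_{n-1},n-1}$, then solve it at the level of generating functions, then read off the $e_{\lambda,i}$-coefficients.

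\textbf{Step 1: the recurrence.} For $n\ge 2$ I would apply Lemma~\ref{lem:del-con} to the edge $e=(n{-}1,n)$ of $C_n$, which is incident to $n$. Deleting $e$ leaves the path $n,1,2,\ldots,n{-}1$, so $C_n\setminus e\cong P_n$ with $n$ a leaf and $X_{C_n\setminus e,n}=X_{P_n,n}$. Contracting $e$ merges $n{-}1$ and $n$; for $n\ge 3$ this is $C_{n-1}$ with the merged vertex distinguished, so by vertex-symmetry of the cycle $X_{C_n/e,n}=X_{C_{n-1},n-1}$, while for $n=2$ it is the one-vertex graph with a loop, whose pointed chromatic symmetric function is $X_{C_1,1}=0$ by a direct application of Definition~\ref{defn:pointed-chrom-sym}. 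Lemma~\ref{lem:del-con} (used exactly as in Theorem~\ref{thm:complete-graph}) then gives $X_{C_n,n}=X_{P_n,n}-t\,X_{C_{n-1},n-1}$ for all $n\ge 2$, together with the stated value $X_{C_1,1}=0$.

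\textbf{Step 2: generating functions.} Write $H(z)=\sum_{n\ge 1}X_{C_n,n}z^n$ and recall from Theorem~\ref{thm:path-pointed-chrom} that $G(z):=\sum_{n\ge 0}X_{P_n,n}z^n=\tfrac{z}{1+tz}F(z)$, where $X_{P_0,0}=0$ and $X_{P_1,1}=1$. Summing the recurrence of Step~1 over $n\ge 2$ and using $X_{C_1,1}=0$ to re-index the shifted sum, I get $H(z)=\bigl(G(z)-z\bigr)-tz\,H(z)$, i.e. $H(z)=\tfrac{1}{1+tz}\bigl(\tfrac{z}{1+tz}F(z)-z\bigr)$, which is the first assertion of the theorem.

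\textbf{Step 3: passing to the pointed elementary basis.} Substituting the closed form $G(z)=\bigl(\sum_{i\ge1}e_{i,i}z^i\bigr)\big/\bigl(1-\sum_{i\ge1}(i-1)e_iz^i\bigr)$ from the proof of Theorem~\ref{thm:path-pointed-chrom}, I would prove that $H(z)=\bigl(\sum_{i\ge1}(i-1)e_{i,i}z^i\bigr)\big/\bigl(1-\sum_{i\ge1}(i-1)e_iz^i\bigr)$, i.e. that $H(z)$ differs from $G(z)$ only by inserting a factor $(i-1)$ on the distinguished part in the numerator. By Step~2 this reduces, after clearing the common denominator and using $e_{1,1}=1$ (Example-Lemma~\ref{exlem:pointed-e}), to the power-series identity $\sum_{i\ge2}(i-2)e_{i,i}z^i=\sum_{i\ge1}(i-1)\bigl(e_i-te_{i,i}\bigr)z^{i+1}$, which holds termwise because Proposition~\ref{prop:e-times-t} gives $te_{i,i}=e_i-e_{i+1,i+1}$.

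\textbf{Step 4: extracting coefficients.} Expanding $\bigl(1-\sum_{i\ge1}(i-1)e_iz^i\bigr)^{-1}$ as a geometric series and multiplying by the numerator $\sum_{i\ge1}(i-1)e_{i,i}z^i$, every monomial of $H(z)$ comes from a composition $\alpha=(\alpha_1,\ldots,\alpha_k)$ of $n$, contributing the basis element $e_{\alpha_1,\alpha_1}\prod_{s\ge2}e_{\alpha_s}=e_{\alpha,\alpha_1}$ with scalar $\prod_{s=1}^k(\alpha_s-1)=\prod_{j\in\alpha}(j-1)$ (the product now running over \emph{all} parts, which is the only difference from \eqref{eq:path-pointed-e}). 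Collecting the compositions with a fixed underlying partition $\lambda$ and distinguished part $i$ produces the factor $\binom{\ell(\lambda\setminus i)}{m(\lambda\setminus i)}$ counting the orderings of the multiset $\lambda\setminus i$, which is exactly \eqref{eq:cycle-pointed-e}.

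\textbf{Main obstacle.} There is no serious difficulty beyond careful bookkeeping; the one genuinely delicate point is the $n=2$ case of Step~1, where contraction creates a loop and one must use the value $X_{C_1,1}=0$ obtained directly from Definition~\ref{defn:pointed-chrom-sym} rather than by passing to an underlying simple graph. The algebraic identity in Step~3 is the other place requiring attention, but it is routine once Proposition~\ref{prop:e-times-t} is in hand, just as in the path case.
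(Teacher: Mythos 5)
Your proof is correct. For the generating-function identity your Steps 1--2 coincide with the paper's argument: the same deletion--contraction recurrence $X_{C_n,n}=X_{P_n,n}-tX_{C_{n-1},n-1}$ (with the same care about $C_1$, $C_2$ as multigraphs and $X_{C_1,1}=0$), summed over $n\ge 2$ and solved for $H(z)$. Where you genuinely diverge is the pointed-$e$ expansion \eqref{eq:cycle-pointed-e}: the paper proves it by induction on $n$, expanding $-tX_{C_{n-1},n-1}$ in the basis $\{e_{\lambda,i}\}$ via Proposition~\ref{prop:e-times-t} in the form $te_{\lambda,i}=e_{\lambda\cup 1,1}-e_{\lambda^{\uparrow i},i+1}$ and doing a multinomial reindexing before adding the known path expansion; you instead stay at the level of generating functions, proving the closed form
\begin{equation*}
\sum_{n\ge 1}X_{C_n,n}z^n=\frac{\sum_{i\ge 1}(i-1)e_{i,i}z^i}{1-\sum_{i\ge 1}(i-1)e_iz^i},
\end{equation*}
whose verification reduces (I checked the algebra) to $N(z)-zD(z)=(1+tz)M(z)$, i.e. to your termwise identity, which is exactly $e_i-te_{i,i}=e_{i+1,i+1}$ from Proposition~\ref{prop:e-times-t}; the coefficient extraction over compositions then mirrors the paper's own proof of Theorem~\ref{thm:path-pointed-chrom}. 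Both routes hinge on the same key lemma (Proposition~\ref{prop:e-times-t}), but yours has the advantage of producing an explicit rational expression for the cycle series that makes the ``insert a factor $(i-1)$ on the distinguished part'' relationship with the path series transparent and avoids the inductive bookkeeping, while the paper's induction works coefficientwise and exhibits directly how the path and cycle coefficients differ under the recurrence. Your attention to the $n=2$ case (contraction creating a loop, where one cannot pass to the underlying simple graph) is a point the paper glosses over, and it is handled correctly.
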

Note that the only difference between expression \eqref{eq:cycle-pointed-e} for $X_{C_n,n}$ and expression \eqref{eq:path-pointed-e} for $X_{P_n,n}$ in Theorem~\ref{thm:path-pointed-chrom} is that $\prod_{j \in \lambda \setminus i} j{-}1$ is replaced by $\prod_{j \in \lambda} j{-}1$. Thus, Theorem~\ref{thm:cycle-pointed-chrom} asserts that the linear map $\Lambda[t] \to \Lambda[t]$ sending $e_{\lambda,i}$ to $(i-1)e_{\lambda,i}$ maps $X_{P_n,n}$ to $X_{C_n,n}$. 

\begin{proof}[Proof of Theorem~\ref{thm:cycle-pointed-chrom}] Let $H(z) = \sum_{n=2}^{\infty} X_{C_n,n} z^n$.
    The deletion-contraction recurrence gives
    \begin{equation} \label{eq:pointed-cycle-recurrence}
        X_{C_n,n} = X_{P_n,n} - tX_{C_{n-1},n-1}
    \end{equation}
    for $n \geq 2$. Thus, $H(z) = \sum_{n=2}^{\infty} X_{P_n,n}z^n - tz H(z)$, and solving for $H(z)$ and applying Theorem~\ref{thm:path-pointed-chrom} gives
    \begin{equation*}
        \frac{1}{1+tz}\left(\frac{z}{1+tz}F(z) - z\right)
    \end{equation*}

By induction on $n$ and Proposition~\ref{prop:e-times-t},
\begin{align*}
    -tX_{C_{n-1}, n-1} &= \sum_{\substack{\lambda \\ i \in \lambda}} {\ell(\lambda \setminus i) \choose m(\lambda \setminus i)} \left[\prod_{j\in \lambda} j{-}1\right] (e_{\lambda^{\uparrow i}, i+1} - e_{\lambda \cup 1,1})\\
    &= \sum_{\substack{\lambda \\ 2 \leq i \in \lambda}} {\ell(\lambda \setminus i) \choose m(\lambda \setminus i)} \left[(i-2) \prod_{j\in \lambda \setminus i} j{-}1\right] e_{\lambda,i} - \sum_{\lambda} \sum_{i \in \lambda} {\ell(\lambda \setminus i) \choose m(\lambda \setminus i)} \left[\prod_{j \in \lambda} j{-}1\right]    e_{\lambda \cup 1,1}\\
    &= \sum_{\substack{\lambda \\ 2 \leq i \in \lambda}} {\ell(\lambda \setminus i) \choose m(\lambda \setminus i)} \left[(i-2) \prod_{j\in \lambda \setminus i} j{-}1\right] e_{\lambda,i} - \sum_{\lambda} {\ell(\lambda) \choose m(\lambda)} \left[\prod_{j \in \lambda} j{-}1\right]    e_{\lambda \cup 1,1},
\end{align*}
where in the last line we have made the simplification
\begin{equation*}
    \sum_{i \in \lambda} {\ell(\lambda \setminus i) \choose m(\lambda \setminus i)} = \sum_{i \in \lambda} \frac{m_i(\lambda)}{\ell(\lambda)} {\ell(\lambda) \choose m(\lambda)} = {\ell(\lambda) \choose m(\lambda)}.
\end{equation*}
But after reindexing,
\begin{equation*}
    \sum_{\lambda} {\ell(\lambda) \choose m(\lambda)} \left[\prod_{j \in \lambda} j{-}1\right]    e_{\lambda \cup 1,1} = \sum_{\lambda : 1 \in \lambda} {\ell(\lambda \setminus 1) \choose m(\lambda \setminus 1)} \left[\prod_{j \in \lambda \setminus 1} j{-}1\right]    e_{\lambda},
\end{equation*}
so actually
\begin{equation*}
    -tX_{C_{n-1},n-1} = \sum_{\substack{\lambda \\ i \in \lambda}} {\ell(\lambda \setminus i) \choose m(\lambda \setminus i)} \left[(i-2) \prod_{j\in \lambda \setminus i} j{-}1\right] e_{\lambda,i}.
\end{equation*}
Writing $[e_{\lambda,i}]f$ for the coefficient of $e_{\lambda,i}$ in $f \in \Lambda[t]$, we see
\begin{align*}
    [e_{\lambda,i}]X_{C_n,n} &= [e_{\lambda,i}](X_{P_n,n} - tX_{C_{n-1},{n-1}})\\
    &= {\ell(\lambda \setminus i) \choose m(\lambda \setminus i)} \left[\prod_{j \in \lambda \setminus i} j{-}1 + (i-2)\prod_{j \in \lambda \setminus i} j{-}1 \right]\\
     & = {\ell(\lambda \setminus i) \choose m(\lambda \setminus i)} \prod_{j \in \lambda} j{-}1,
\end{align*}
and the theorem follows.
\end{proof}

\subsection{Unit interval graphs} 
If $S$ is a finite set of bounded open intervals in $\RR$, the associated \emph{interval graph} is the graph with vertex set $S$ and an edge from $I_1$ to $I_2$ if $I_1 \cap I_2 \neq \emptyset$. If the intervals all have length $1$ then the graph is a \emph{unit interval graph}. The unit interval graphs are exactly the incomparability graphs of the $(\mathbf{3}+\mathbf{1})$- and $(\mathbf{2}+\mathbf{2})$-free posets (a poset being $(\mathbf{2}+\mathbf{2})$-free if it does not contain the disjoint union of two chains of size two as an induced subposet).

Guay-Paquet \cite{guay-paquet} showed that if $G$ is the incomparability graph of a $(\mathbf{3}+\mathbf{1})$-free poset, then $X_G$ is a convex combination of $X_H$ for some unit interval graphs $H$. In particular, to prove the $e$-positivity of incomparability graphs of $(\mathbf{3}+\mathbf{1})$-free posets conjectured by Stanley and Stembridge, it would suffice to prove it for unit interval graphs.

Suppose $m = m_1 \cdots m_n$ is a sequence of integers $1 \leq m_1 \leq \cdots \leq m_n \leq n$ such that $m_i \geq i$ for each $i$. Associate to $m$ the graph $G(m)$ on $[n]$ with an edge $(i,j)$ if and only if $i < j \leq m_i$. Up to isomorphism, the unit interval graphs on $n$ vertices are exactly the graphs $G(m)$.

\begin{conj} $X_{G(m),1}$ is pointed $e$-positive. \end{conj}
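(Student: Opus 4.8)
The plan is to argue by induction on the number of edges via the pointed deletion-contraction recurrence (Lemma~\ref{lem:del-con}), applied to an edge incident to the distinguished vertex $1$. Since $1$ is the leftmost vertex, its incident edges are $(1,2),(1,3),\dots,(1,m_1)$, and deleting the longest of these gives $X_{G(m)\setminus(1,m_1),1}=X_{G(m'),1}$ with $m'=(m_1-1,m_2,\dots,m_n)$, again a unit interval graph rooted at its first vertex, so the deletion term is covered by the inductive hypothesis. The difficulty — and this is the real content of any attempt — is the contraction term $-t\,X_{G(m)/(1,m_1),1}$. Three things go wrong: $G(m)/(1,m_1)$ may carry multi-edges (harmless, by the multigraph invariance in the proof of Lemma~\ref{lem:del-con}); it need not be a unit interval graph rooted at its first vertex, since contraction can place the root ``in the interior''; and, most seriously, the alternating sign means pointed $e$-positivity of both terms would not by itself give it for the combination. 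So one is forced to find a class $\mathcal{C}$ of rooted graphs containing every $(G(m),1)$, closed under the two operations produced by deletion-contracting an edge at the root, and on which pointed $e$-positivity can be established directly. This is exactly the sort of ``good class'' that the Stanley--Stembridge circle of ideas is about; indeed, since Lemma~\ref{lem:psi-e} gives $\psi(e_{\lambda,i})=ie_{\lambda}$, pointed $e$-positivity of $X_{G(m),1}$ would force $e$-positivity of $X_{G(m)}=\psi(X_{G(m),1})$, which by Guay-Paquet's reduction \cite{guay-paquet} is equivalent to the Stanley--Stembridge conjecture \cite{stanley-stembridge-immanents}. Thus a complete proof must contain a proof of Stanley--Stembridge, and the realistic route is to lift a known proof of the latter to the pointed setting, tracking the extra root data throughout.

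A useful structural reformulation for setting up the induction: $(G(m),1)$ is obtained from a smaller unit interval graph $G'$ (the restriction to $\{2,\dots,n\}$) by prepending a new root adjacent exactly to the initial clique $\{2,\dots,m_1\}$ of $G'$. One would try to show that the class of rooted graphs built from unit interval graphs by such ``prepend a vertex adjacent to a clique that starts at the root'' operations is closed under rooted deletion-contraction and is pointed $e$-positive, with boundary instances given by the explicit formulas already available: Theorem~\ref{thm:path-pointed-chrom} for paths rooted at a leaf, Theorem~\ref{thm:cycle-pointed-chrom} for cycles (both manifestly pointed $e$-positive), and the complete-graph formula $X_{K_n,n}=(n-1)!\,e_{n,n}$ above. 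A more hands-on variant is to guess a manifestly $e$-positive combinatorial formula for the pointed $e$-expansion of $X_{G(m),1}$ — generalizing the ``product of $(j{-}1)$ over $\lambda\setminus i$, weighted by a multinomial'' shape of the path formula \eqref{eq:path-pointed-e} — presumably indexed by chains or acyclic orientations of $G(m)$ compatible with the root, with the distinguished part $i\in\lambda$ recording the size of the block forced to contain vertex $1$; this would be a pointed refinement of the Gasharov and Gebhard--Sagan style expansions. The identity $te_{\lambda,i}=e_{\lambda}-e_{\lambda^{\uparrow i},i+1}$ of Proposition~\ref{prop:e-times-t} is precisely what is needed to reconcile the $-t$ of deletion-contraction with such a formula, so the inductive bookkeeping would be plausible once the formula were in hand.

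The step I expect to be the main obstacle, in either approach, is the absence of any combinatorial model for the pointed $e$-expansion. Corollary~\ref{cor:monomial-positive} yields only a \emph{signed} monomial expansion; we have no conjectural $e$-positive formula beyond the paths, cycles, and complete graphs above; and the representation-theoretic machinery of Section~\ref{sec:pointed-chrom-sym} is adapted to \emph{Schur} positivity (traces of products of positive semidefinite operators) and does not obviously interact with the $e$-basis, so Corollary~\ref{lem:pointed-e-schur-positive} cannot be run in reverse. Consequently the crux is either (i) exhibiting the ``good class'' $\mathcal{C}$ and proving its closure and pointed $e$-positivity — which, as noted, would simultaneously settle Stanley--Stembridge — or (ii) discovering and then verifying, again by rooted deletion-contraction and Proposition~\ref{prop:e-times-t}, the right tableau formula, of which Theorems~\ref{thm:path-pointed-chrom} and~\ref{thm:cycle-pointed-chrom} should be the two basic special cases.
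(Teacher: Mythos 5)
The statement you were asked to prove is stated in the paper as a \emph{conjecture}: the paper offers no proof, only verification for $n\le 7$ and the observation (via Lemma~\ref{lem:psi-e} and Guay-Paquet's reduction) that it would imply the Stanley--Stembridge conjecture. Your submission, to its credit, recognizes this and is candid about it --- but as a proof it has a gap that is the entire argument. Nothing in it establishes pointed $e$-positivity for a single new case: the induction you sketch is never set up, because the class $\mathcal{C}$ of rooted graphs that is supposed to be closed under the rooted deletion-contraction of Lemma~\ref{lem:del-con} is never exhibited, and the conjectural ``tableau-style'' pointed $e$-expansion generalizing \eqref{eq:path-pointed-e} and \eqref{eq:cycle-pointed-e} is never written down, let alone verified against Proposition~\ref{prop:e-times-t}. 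As you yourself note, the sign on the contraction term means that pointed $e$-positivity of the two pieces would not combine, so the naive induction on edges at the root fails at the first step; this is exactly the obstruction the paper records when it says its deletion-contraction recurrences ``do not a priori preserve pointed $e$-positivity'' and either leave the class of graphs $G(m)$ or pass through roots $v\neq 1$.

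Your diagnosis of the landscape is accurate and matches the paper's: the boundary cases you cite (Theorem~\ref{thm:path-pointed-chrom}, Theorem~\ref{thm:cycle-pointed-chrom}, the complete-graph formula) are precisely what the paper has, the Schur-positivity machinery of Section~\ref{sec:pointed-chrom-sym} does not see the $e$-basis, and any proof of the conjecture would in particular prove Stanley--Stembridge (indeed the conjecture is a strict strengthening of the $e$-positivity of $X_{G(m)}$, since $\psi$ only goes one way). So treat your text as what it is --- a correct problem analysis and research plan --- not a proof; there is no argument here to compare with the paper's, because the paper deliberately leaves the statement open.
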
           
By Lemma~\ref{lem:psi-e}, this conjecture would imply the $e$-positivity of $X_{G(m)}$ and hence the Stanley-Stembridge conjecture. It holds for $n \leq 7$. One can use the deletion-contraction recurrence to obtain various recurrences for $X_{G(m),1}$, but these recurrences do not a priori preserve pointed $e$-positivity, and the ones we have found have the drawback of either requiring us to broaden our class of graphs, or of passing through some $X_{G(m),v}$ for $v \neq 1$. It is worth noting that $G(m)$ is a chordal graph, so the tools of \S \ref{sec:chordal} apply, although we do not know whether these tools are useful for understanding the $e$-expansion of $X_{G(m)}$.

\section{Acknowledgements}
I would like to thank John Stembridge and Jair Taylor for helpful conversations and comments, and Sara Billey for pointing me to a conjecture of Richard Stanley \cite{stanley-MO-question} which led to some of the ideas here.

\bibliographystyle{plain}
\bibliography{../../bib/algcomb}

\end{document}